\documentclass[12pt,a4paper]{article}
\usepackage{amsmath,amssymb}
\usepackage{amsthm}
\usepackage{txfontsb}
\usepackage{newpxtext, newpxmath}
\usepackage{mathrsfs}
\usepackage{ytableau}
\usepackage{caption}
\usepackage{tikz}
\usepackage{mathtools}
\usepackage{amsxtra}
\usepackage{textcomp}
\usepackage{color}
\usepackage{titlesec}
\usepackage{abstract}
\usepackage{enumitem}
\usepackage[normalem]{ulem}

\titleformat{\section}
  [block]
  {\normalfont\large\scshape\filcenter}
  {\thesection}
  {0.5em}
  {}
    
\titleformat{\part}
  [block]
  {\normalfont\large\scshape\filcenter}
  {\thepart}
  {0.5em}
  {}

\numberwithin{equation}{section}
\usetikzlibrary{arrows,patterns}
\usetikzlibrary{cd}
\usetikzlibrary{arrows.meta} 
\counterwithin*{part}{section}

\theoremstyle{definition}
\newtheorem{thm}{Theorem}[section]
\newtheorem{prop}[thm]{Proposition}
\newtheorem{dfn}[thm]{Definition}
\newtheorem{lem}[thm]{Lemma}

\newtheorem{ex}[thm]{Example}
\newtheorem{rmk}[thm]{Remark}

\renewenvironment{abstract}
  {
    \par\small
    \begin{list}{}{
      \setlength{\leftmargin}{2em}
      \setlength{\rightmargin}{2em}
      \setlength{\itemindent}{0pt}
    }
    \item\relax
    \textbf{Abstract.}\enspace\ignorespaces
  }
  {\end{list}\par\medskip}
\AtEndEnvironment{thm}{\qed}
\AtEndEnvironment{prop}{\qed}
\AtEndEnvironment{dfn}{\qed}
\AtEndEnvironment{lem}{\qed}
\AtEndEnvironment{cor}{\qed}
\AtEndEnvironment{ex}{\qed}
\AtEndEnvironment{rmk}{\qed}

\newcommand{\p}{\mathbb{P}}

\newcommand{\al}{\alpha}
\newcommand{\bp}{\bbackslash\hspace{0.1pt}'}

\newcommand{\ol}{\overline}
\newcommand{\w}{\widehat}
\newcommand{\xlr}{\xleftrightarrow}
\newcommand{\ws}{\xleftarrow[]{\!\!\!\!\!\text{WS}\!\!\!\!\!}}
\newcommand{\hm}{\xleftarrow[]{\!\!\!\!\!\text{HM}\!\!\!\!\!}}
\newcommand{\wsm}{\xleftarrow[]{\!\!\!\!\!\text{WSM}\!\!\!\!\!}}
\newcommand{\gm}{\xleftarrow[]{\!\!\!\!\!\text{GM}\!\!\!\!\!}}
\newcommand{\m}{\xleftarrow[]{\!\!\!\!\!\text{OM}\!\!\!\!\!}}
\newcommand{\dm}{\xleftarrow[]{\!\!\!\!\!\text{DM}\!\!\!\!\!}}
\newcommand{\wsmx}{\xleftarrow[X]{\!\!\!\!\!\text{WSM}\!\!\!\!\!}}
\newcommand{\gmx}{\xleftarrow[X]{\!\!\!\!\!\text{GM}\!\!\!\!\!}}
\newcommand{\x}{\xleftarrow[X]{}}
\newcommand{\ox}{\xleftarrow[\hspace{-6.5pt}\omega(X)\hspace{-6.5pt}]{}}

\newcommand{\red}{\textcolor{red}}
\newcommand{\blue}{\textcolor{blue}}
\usepackage{pict2e}

\DeclareMathOperator{\bbackslash}{
\setlength{\unitlength}{1.6ex}
  \begin{picture}(1,1)
  \polyline(0,1)(0,0)(1,0)(1,1)(0,1)(1,0)
  \end{picture}
  }

\makeatletter
\renewcommand{\@maketitle}{%
  \newpage
  \begingroup
    \centering
    \let\footnote\thanks
    {\Large \@title\par}
    \vspace{0.8em}
    {\normalsize \@author\par}
  \endgroup
  \vspace{1em}
}
\makeatother

\definecolor{linkblue}{rgb}{0,0,0.6}

\usepackage{hyperref}
\hypersetup{
  colorlinks=true,
  allcolors=linkblue,
  linktoc=page
}

\begin{document}

\title{A duality-preserving extension of the Worley--Sagan insertion and Haiman's mixed insertion for the hyperoctahedral group} 
\author{Masato Nakagiri}
\date{}
\addtolength{\skip\footins}{6pt}
\maketitle

\begingroup
\makeatletter
\renewcommand{\@makefntext}[1]{\noindent #1}
\makeatother
\footnotetext{%
\normalfont
\textit{\textup{2020} Mathematics Subject Classification.}\enspace
Primary 05A19; Secondary 05A05.\par
\noindent
\textit{Key words and phrases.}\enspace
Shifted Young tableaux, Worley--Sagan insertion,
Haiman's mixed insertion, Robinson--Schensted correspondence.
}
\endgroup

\begin{abstract}
The Worley--Sagan insertion and Haiman's mixed insertion are insertion algorithms for shifted Young tableaux, and each of them gives a Robinson--Schensted-type correspondence between the symmetric group of degree $n$ and a set consisting of certain pairs of same-shape shifted Young tableaux with $n$ cells. It is a known fact that these two insertions are dual to each other. Our purpose is to give an extension of these two insertions without losing the duality relationship. The extended ones will be insertions producing pairs of shifted tableaux from colored permutations. Our extension of the Worley--Sagan insertion is different from the restriction of Sagan's own ``Knuth version'' to colored permutations. In proving the duality between our extended insertions, we ``embed'' them into Shimozono and White's doubly mixed insertion for unshifted tableaux by ``doubling'' shifted tableaux and use the self-duality of the doubly mixed insertion shown by Shimozono and White. 
\end{abstract}

\begingroup
\let\bfseries\mdseries
\titleformat*{\section}{\normalfont\large\scshape\filcenter}
\tableofcontents
\endgroup

\section{Introduction}
The Robinson--Schensted correspondence is a bijective correspondence between $S_n$, the symmetric group of degree $n$, and the set of all pairs of same-shape standard tableaux with $n$ cells. This correspondence is defined by using the Schensted insertion \cite{sch}, a combinatorial algorithm which produces a new Young tableau from a tableau and a letter. There are many variations of the Schensted insertion and each of them gives a bijective correspondence between a set of some sort of words and a certain pairs of Young tableaux by suitably defining the recording tableaux. We call this type of correspondence a Robinson--Schensted-type correspondence. 

Our purpose in this paper is to give extensions of the two insertion schemes producing shifted tableaux, one developed by D. Worley \cite{wo} and B. Sagan \cite{sa}, and the other developed by M. Haiman \cite{h}, to the situation involving colored permutations (with $2$ colors, a terminology used by M. Shimozono and D. White \cite{sh-wh}), and to show that the Robinson--Schensted-type correspondence resulting from the two extended insertion schemes are dual to each other in the same sense that the ones resulting from the original Worley--Sagan insertion and Haiman's insertion are dual to each other. We are using the name Worley--Sagan insertion following \cite{h}.
Let us note in advance that our extension of the Worley--Sagan insertion is different from simply adding colors to the $P$-tableau reflecting the colors in the permutation, or from restricting Sagan's own ``Knuth version'' to colored permutation matrices \cite{sa} in a suitable sense. 
The definition of our extension of the Worley--Sagan insertion was inspired by M. Ishikawa's Ph.D. thesis \cite{i}. 

The classical Robinson--Schensted correspondence is a bijection sending $\pi\in S_n$, the symmetric group of degree $n$, to $(P(\pi), Q(\pi))$, a pair of same-shape standard tableaux with $n$ cells. The tableau $P(\pi)$ is given by inserting successively $\pi(1), \pi(2), \dots, \pi(n)$ into the empty tableau using the Schensted insertion, and is called the $P$-symbol or the insertion tableau. 
At each insertion, the shape of the insertion tableau gains one new cell. $Q(\pi)$ is the tableau of the same shape as $P(\pi)$ and in which the cell gained at the insertion of $\pi(k)$ is filled with $k$ ($1\leq k\leq n$). $Q(\pi)$ is called the $Q$-symbol or the recording tableau. 
We adopt the notation ``$\pi\xlr{}(T, U)$'' to indicate that $\pi$ corresponds to $(T, U)$ in the Robinson--Schensted correspondence.

The Robinson--Schensted correspondence (or the Schensted insertion) has an interesting property called the self-duality. 
This means that if $\pi\xlr{}(T, U)$, then $\pi^{-1}\xlr{}(U, T)$ holds for $\pi\in S_n$. 
Some variations of the Schensted insertion enjoy similar self-dualities, but in many cases dualities occur between two different variations. 
In particular, the duality between the Worley--Sagan insertion and Haiman's mixed insertion for shifted tableaux forms the basis of this paper. 
This duality means that if $\pi\xlr{\!\!\!\!\!\text{WS}\!\!\!\!\!}(P, Q)$, then $\pi^{-1}\xlr{\!\!\!\!\!\text{HM}\!\!\!\!\!}(Q, P)$ holds for $\pi\in S_n$. 
Here, ``$\,\xlr{\!\!\!\!\!\text{WS}\!\!\!\!\!}\,$'' and ``$\,\xlr{\!\!\!\!\!\text{HM}\!\!\!\!\!}\,$'' stand for the Robinson--Schensted-type correspondences given by the Worley--Sagan insertion and Haiman's mixed insertion for shifted tableaux, respectively.

Our extensions of the Worley--Sagan insertion and Haiman's mixed insertion are named the Worley--Sagan mixed insertion and the generalized mixed insertion in this paper. They are defined in Definition~\ref{WSM} and Definition~\ref{GM}, respectively. Our goal is to prove Theorem~\ref{WSMGMd}, which states the duality between these two extended insertions. 
This duality ``contains'' the original duality in it. 
That is, if we restrict our Robinson--Schensted-type correspondences for colored permutations to permutations, namely the elements of $S_n$, the newly obtained  duality coincides with the already-known duality between the Worley--Sagan insertion and Haiman's mixed insertion.
Our proof of Theorem~\ref{WSMGMd} follows the line of Haiman's proof \cite{h} of the duality between the Worley--Sagan insertion and his own mixed insertion in the sense that we ``double'' shifted tableaux and words, and ``embed'' the present duality into an already-known duality of insertions for unshifted tableaux. Haiman embedded the duality between the Worley--Sagan insertion and his own mixed insertion into the duality between the left-right insertion and his own mixed insertion for unshifted tableaux. 

Doubling a shifted tableau means producing an unshifted tableau with $2n$ cells from a shifted tableau with $n$ cells. Roughly speaking, the procedure attaches the ``tranpose'' of itself to a shifted tableau. Doubling a word refers to a certain procedure producing a word of length $2n$ from a word of length $n$. 
In our proof of Theorem~\ref{WSMGMd}, we use two types of doubling operators $\bbackslash$ and $\bp$ for shifted tableaux, and two types of doubling operators $d_1$ and $d_2$ for words. 
The definitions of $\bbackslash$\ , $\bp$, $d_1$ and $d_2$ are given in Definition~\ref{squareop}, Definition~\ref{d1} and Definition~\ref{d2}, respectively. 
By doubling shifted tableaux and words, the duality in Theorem~\ref{WSMGMd} will be embedded into the self-duality of the doubly mixed insertion by Shimozono and White \cite{sh-wh}, an insertion for unshifted tableaux which contains Haiman's left-right insertion and mixed insertion as special cases. 
This means that if we transport the Robinson--Schensted-type correspondences given by the Worley--Sagan mixed insertion and the generalized mixed insertion via the process of doubling tableaux and words, it coincides with a part of the Robinson--Schensted-type correspondence given by the doubly mixed insertion, and the duality in Theorem~\ref{WSMGMd}, after doubling tableaux and words, is obtained as a restriction of the self-duality of the doubly mixed insertion.

Let us explain the organization of this paper. 
In \S 2, we prepare some terminology for later use. 
In \S 3, we review the definition of unshifted tableaux and Shimozono and White's doubly mixed insertion. In \S 4, we review the definition of shifted tableaux, the Worley--Sagan insertion and Haiman's mixed insertion. We are using some space to review these ``historical'' procedures in \S\S3--4, since our extensions use the ideas from them extensively and in intricate combinations, and we hope that introducing notions such as the row/column nature of letters in the context of these prototypical procedures eases the understanding of our extensions. In \S 5, we define the Worly-Sagan mixed insertion and the generalized mixed insertion. In \S 6, we will show that the Worley--Sagan mixed insertion can be embedded into the doubly mixed insertion by doubling tableaux and words. \S 7 is the generalized mixed insertion version of \S 6. We embed the generalized mixed insertion into the doubly mixed insertion. In \S 8, using the results obtained in \S 6 and \S 7, we will show that the Worley--Sagan mixed insertion and the generalized mixed insertion are dual to each other. 

This paper grew out of the author's master's thesis \cite{n}.

\section{Letters, Words, and Cells in the Quadrant and the Octant}
A tableau is an object obtained by filling the boxes of a shifted or unshifted Young diagram with letters.
All insertion algorithms defined in this paper are operations by which we obtain a new tableau out of a tableau and a letter. We slide letters written in our original tableau in order to get an empty cell in which our new letter is added. The manner of sliding letters is determined by comparision of letters. 
In this paper, we adopt the set of letters $\p\cup\ol{\p}$, which consists of positive numbers and barred positive numbers $\ol{1}, \ol{2}, \ol{3}, \dots$. 
\begin{dfn}\label{letorder}
Let $\mathbb{P}$ be the set of positive integers. 
We define $\p\cup\ol{\p}$ to be the set $\{1, 2, \dots, \ol{1}, \ol{2}, \dots\}$ consisting of the positive integers and the letters $\ol{1}, \ol{2}, \ol{3}, \dots$ obtained by adding bars to the positive integers. The elements of $\p\cup\ol{\p}$ are called \textit{colored letters}. We endow $\p\cup\ol{\p}$ with a total order such that $\ol{1}<1<\ol{2}<2<\ol{3}<3<\dots$.
We call elements of $\ol{\p}$ \textit{barred letters} and elements of $\mathbb{P}$ \textit{unbarred letters}. In this paper, we use Greek characters $\alpha, \beta, \gamma, \dots$ to stand for elements of $\p\cup\ol{\p}$, and we use Latin characters $a, b, c, \dots$ to stand for elements of $\mathbb{P}$.
\end{dfn}

Next, we define two operators on letters which will appear repeatedly in our argument.
\begin{dfn}
We define
the operation $|\cdot| : \p\cup\ol{\p}\to\p$, which removes a bar from a barred letter and keeps an unbarred letter unchanged. That is, $|\bar{a}|=a$ and $|a|=a$ for $a\in\p$. 
We define the operator $\ol{\cdot} : \p\cup\ol{\p}\to\p\cup\ol{\p}$, which switches the colors of the letters: 
\begin{align*}
\ol{\alpha}=\begin{cases} \ol{a} & \text{if $\alpha=a$, unbarred}\\
a&
\text{if $\alpha=\ol{a}$, barred}\\
\end{cases}, 
\end{align*}
where $a\in\p$. 
\end{dfn}

A Robinson--Schensted-type correspondence is a bijection between a set of words and a set of certain pairs of tableaux.  
Words are defined as follows: 
\begin{dfn} Let $S$ be a set. 
A finite sequence $s_1\dots s_n$ of elements of $S$ is called an \textit{$S$-word of length n}. An \textit{$S$-biword of length n} is defined to be an arrangement $\begin{psmallmatrix}s_1\dots s_n \\ t_1\dots t_n\end{psmallmatrix}$ of two $S$-words of length $n$. 
Furthermore, for a $\left(\p\cup\ol{\p}\right)$-word $\al=\al_1\dots\al_n$, we define a $\p$-word $|\al|$ by 
$|\al|=|\al_1|\dots|\al_n|$. 
\end{dfn}
An element of $S_n$ is also called a permutation. 
As is commonly done, we identify $\pi\in S_n$ such that $\pi(k)=a_k$ ($1\leq k\leq n$) with
the $\mathbb{P}$-word $a_1\dots a_n$. 

\begin{ex}
Define $\sigma, \tau\in S_3$ by 
$\sigma(1)=2, \sigma(2)=1, \sigma(3)=3, \tau(1)=2, \tau(2)=3, \tau(3)=1$. 
In the identification mentioned above, $\sigma=213$ and $\tau=231$. 
\end{ex}

In dealing with tableaux and diagrams, we use the Cartesian coordinate system in the following manner: 
Set $\mathcal{Q}=\mathbb{P}^2, \mathcal{O}=\{\,(i, j)\in\mathbb{P}^2 \mid i\leq j\,\}$. We call $\mathcal{Q}$ and $\mathcal{O}$ the \textit{quadrant} and the \textit{octant}, respectively. 
As illustrated in Figure~\ref{fig1}, we draw a Cartesian plane with its $x$-axis in the downward direction and its $y$-axis in the rightward direction. 

We identify each element $(i, j)$ of $\mathcal{Q}$ with a square of side length $1$ centered at $(i, j)$, and each such square is called a \textit{cell}. In Figure~\ref{fig1}, the cells $(2, 6)$ and $(7, 2)$ are indicated by gray squares.
In this paper, a subset $\mathcal{A}$ of $\mathcal{Q}$ is represented by depicting all of the cells which correspond to the elements of $\mathcal{A}$. 
For a positive integer $i$, we call the set 
$\mathcal{R}_i=\{\,(i, k) \mid k\in\mathbb{P}\,\}$ the \textit{$i$-th row} of $\mathcal{Q}$ and the set $\mathcal{C}_i=\{\,(k, i) \mid k\in\mathbb{P}\,\}$ the \textit{$i$-th column} of $\mathcal{Q}$, and $\mathcal{A}\cap\mathcal{R}_i$ and $\mathcal{A}\cap\mathcal{C}_i$ the \textit{$i$-th row} and the \textit{$i$-th column} of $\mathcal{A}$, respectively. 
The direction in which the row numbers increase is called \textit{down} and the opposite direction is called \textit{up}. 
The direction in which the column numbers increase is called \textit{right} and the opposite direction is called \textit{left}. 
The left figure in Figure~\ref{fig2} illustrates $\mathcal{Q}$ and the right figure illustrates $\mathcal{O}$. 
We say a cell in $\mathcal{Q}$ or $\mathcal{O}$ is a \textit{diagonal} cell if it is represented as $(i, i)$ for some positive integer $i$, and an \textit{off-diagonal} cell if it is not diagonal. In Figure~\ref{fig1}, the diagonal cells up to the seventh row are indicated by shaded squares.
We denote the set of diagonal cells by $\Delta$. In other words, $\Delta=\{\,(i, i) \mid i\in\mathbb{P} \,\}$. 

\begin{figure}[htbp]\label{fig1}
\centering
\begin{tikzpicture}[x=1.0cm, y=1.0cm, yscale=-1]
\draw[step=1.0cm, help lines, color=gray!30, dashed] (-0.9,-0.9) grid (8.25,8.25);
\draw[->,ultra thick] (0,-1)--(0,8.5) node[below]{$x$};
\draw[->,ultra thick] (-1,0)--(8.5,0) node[right]{$y$};
\fill[black!30] (5.5,1.5) rectangle (6.5, 2.5);
\fill[black!30] (1.5,6.5) rectangle (2.5, 7.5);
\foreach \x in {1, ..., 7} \fill[pattern=north east lines] (\x-0.5,\x-0.5) rectangle (\x+0.5,\x+0.5);
\foreach \x in {1, ..., 8} \draw (0, \x)node[left]{$\x$};
\foreach \x in {1, ..., 8} \draw (\x,0) node[above]{$\x$};
\foreach \x in {1, ..., 7} \draw[line width=1.5pt] (\x-0.5,\x-0.5)--(\x+0.5,\x-0.5)(\x+0.5,\x-0.5)--(\x+0.5,\x+0.5)(\x+0.5,\x+0.5)--(\x-0.5,\x+0.5)(\x-0.5,\x+0.5)--(\x-0.5,\x-0.5);
\draw[line width=1.5pt] (5.5,1.5)--(6.5,1.5)(6.5,1.5)--(6.5,2.5)(6.5,2.5)--(5.5,2.5)(5.5,2.5)--(5.5,1.5);
\draw[line width=1.5pt] (1.5,6.5)--(2.5,6.5)(2.5,6.5)--(2.5,7.5)(2.5,7.5)--(1.5,7.5)(1.5,7.5)--(1.5,6.5);
\draw (0, 0) node[above left] {O}; 
\draw (6, 2) node{$(2, 6)$};
\draw (2, 7) node{$(7, 2)$};
\draw (7.9, 7.7) node{$\ddots$};
\end{tikzpicture}
\caption{\label{fig1}}
\end{figure}

\begin{figure}[htbp]\label{fig2}
\begin{tikzpicture}[x=0.8cm, y=0.8cm, yscale=-1]
\draw[step=0.8cm, help lines, color=gray!30, dashed] (-0.9,-0.9) grid (8.25,8.25);
\draw[->,ultra thick] (0,-1)--(0,8.5) node[below]{$x$};
\draw[->,ultra thick] (-1,0)--(8.5,0) node[right]{$y$};
\foreach \x in {1, ..., 8} \draw (0, \x)node[left]{$\x$};
\foreach \x in {1, ..., 8} \draw (\x,0) node[above]{$\x$};
\foreach \x in {1.5, ..., 7.5} \draw[line width=1.5pt] (0.5,\x)--(7.75,\x);
\foreach \x in {0.5, ..., 6.5} \draw[line width=1.5pt] (\x,7.75)--(\x, 0.5);
\draw[line width=1.5pt](0.5,0.5)--(7.75,0.5);
\draw[line width=1.5pt](7.5,0.5)--(7.5,7.75);
\draw (0, 0) node[above left] {O}; 
\foreach \x in {1, ..., 7} \draw (8, \x)node{$\dots$};
\foreach \x in {1, ..., 7} \draw (\x, 8)node{$\vdots$};
\draw(8,8)node{$\ddots$};
\end{tikzpicture}
\begin{tikzpicture}[x=0.8cm, y=0.8cm, yscale=-1]
\draw[step=0.8cm, help lines, color=gray!30, dashed] (-0.9,-0.9) grid (8.25,8.25);
\draw[->,ultra thick] (0,-1)--(0,8.5) node[below]{$x$};
\draw[->,ultra thick] (-1,0)--(8.5,0) node[right]{$y$};
\foreach \x in {1, ..., 8} \draw (0, \x)node[left]{$\x$};
\foreach \x in {1, ..., 8} \draw (\x,0) node[above]{$\x$};
\foreach \x in {1.5, ..., 7.5} \draw[line width=1.5pt] (\x-1,\x)--(7.75,\x);
\foreach \x in {0.5, ..., 6.5} \draw[line width=1.5pt] (\x,\x+1)--(\x, 0.5);
\draw[line width=1.5pt](0.5,0.5)--(7.75,0.5);
\draw[line width=1.5pt](7.5,0.5)--(7.5,7.75);
\draw (0, 0) node[above left] {O}; 
\foreach \x in {1, ..., 7} \draw (8, \x)node{$\dots$};
\draw(8,8)node{$\ddots$};
\end{tikzpicture}
\caption{\label{fig2}}
\end{figure}

\section{Unshifted Tableaux and the Doubly Mixed Insertion}
In this section, we define unshifted tableaux and review the doubly mixed insertion by Shimozono and White. The doubly mixed insertion is defined in Definition~\ref{DM} based on the ordinary mixed insertion and the dual mixed insertion defined in Definition~\ref{OMDM}. The main objects of study in our current work are the two insertions for shifted tableaux and the duality between them. However, our argument depends on the theory of unshifted tableaux through doubling shifted tableaux in order to show our duality. 
The concrete argument about doubling shifted tableaux will be given in \S 6 and \S 7. This section is a preparation for these sections.

Roughly speaking, an unshifted tableau is an object obtained by filling letters in a certain arrangement of cells in $\mathcal{Q}$.
The arrangements of cells used here are in one-to-one correspondence with the integer partitions. 
First of all, we begin with the definition of integer partitions. 

\begin{dfn}\label{ptn}
Let $n$ be a non-negative integer. 
An \textit{integer partition}, or simply a \textit{partition}, of $n$ is a finite sequence of positive integers $\lambda=(\lambda_1,\dots, \lambda_\ell)$ satisfying $\lambda_1+\cdots+\lambda_\ell=n$ and $\lambda_1\geq\cdots\geq\lambda_\ell$. The terms $\lambda_k$ ($1\le k\le \ell$) of the sequence are called the \textit{parts} of $\lambda$, and the number of parts $\ell$ is called the \textit{length} of $\lambda$ and denoted by $\ell(\lambda)$. 
For $n=0$, we regard the empty sequence, denoted by $\emptyset$, as the unique partition of $0$ with $\ell(\emptyset)=0$. 
We write $\mathcal{P}(n)$ for the set of partitions of $n$, and let $\mathcal{P} = \coprod_{n\geq 0} \mathcal{P}(n)$ be the set of all partitions.
\end{dfn}

Using partitions, unshifted diagrams and unshifted tableaux are defined as follows. 
\begin{dfn}\label{unshifted}
Let $\lambda$ be a partition. The \textit{unshifted diagram} of shape $\lambda$ is
\[D(\lambda)=\{\,(i, j)\in\mathcal{Q} \mid 1\leq i\leq\ell(\lambda), 1\leq j\leq\lambda_i\,\}.\]
An \textit{unshifted tableau} of shape $\lambda$ is a map $T:D(\lambda)\to \p\cup\ol{\p}$ which satisfies the following two conditions: 
\begin{enumerate}
\item\, If $(i, j), (i+1, j)\in D(\lambda)$, then $T(i, j)<T(i+1, j)$,
\item\, If $(i, j), (i, j+1)\in D(\lambda)$, then $T(i, j)\leq T(i, j+1)$.
\end{enumerate}

An unshifted tableau $T:D(\lambda)\to \p\cup\ol{\p}$ is illustrated by filling the cell $(i, j)$ with $T(i, j)$ for all $(i, j)\in D(\lambda)$.
For example, the unshifted diagram of shape $\lambda=(5, 3, 3, 1)$ and an example of an unshifted tableau of shape $\lambda$ are shown in the following figure: 

\[
\ytableausetup{centertableaux}
\hspace{10pt}
\ydiagram{5, 3, 3, 1}
\hspace{60pt}
\begin{ytableau}
\ol{1} & \ol{1} & 1 & 4 & 5 \\
1 & \ol{2} & 5 \\
5 & 7 & 7 \\
6
\end{ytableau}
\]

For an unshifted tableau $T$ of shape $\lambda$, its domain as a map (that is, the unshifted diagram $D(\lambda)$) is also denoted by $D(T)$. For any subset $D\subseteq D(T)$, $T|_D$ is the restriction of the map $T$ to $D$, which is illustrated as the collection of the cells in $D$, with the letters in those cells unchanged from $T$. 
\end{dfn}

\begin{rmk}
 In literature, the term ``unshifted tableau'' often means an arbitrary map $T:D(\lambda)\to\mathscr{L}$, where $\mathscr{L}$ means a set of relevant letters. 
In general, a map $T : D(\lambda)\to\mathscr{L}$ which satisfies (1) and (2) is called ``an unshifted semistandard tableau''. 
\cite{sh-wh} employs the terminology ``semistandard colored tableau'' to indicate an unshifted tableau defined in Definition~\ref{unshifted}. 
The terminology in which we impose (1) and (2) on the definition of an unshifted tableau is based on Macdonald's book \cite{m}. 
\end{rmk}

Let $T$ be an unshifted tableau. We sometimes identify $T$ with a map $\tilde{T}:\mathcal{Q}\to \p\cup\ol{\p}\cup\{\infty\}$ such that 
\begin{align*}
\tilde{T}(c)=\begin{cases}T(c) & \text{if $c\in D(T)$}, \\
\infty & \text{if $c\in\mathcal{Q}\setminus D(T)$}. 
\end{cases}
\end{align*}
When we treat $\tilde{T}$, we extend the total order on $\p\cup\ol{\p}$ (Definition~\ref{letorder} ) to $\p\cup\ol{\p}\cup\{\infty\}$ by setting $\alpha<\infty$ for all $\alpha\in\p\cup\ol{\p}$. 
Note that the conditions (1) and (2) in Definition~\ref{unshifted} are imposed on the cells in $D(T)$
only.
 
Next, we define the ordinary mixed insertion and the dual mixed insertion, on which the doubly mixed insertion is based. 
\begin{dfn}(\cite[Definition~3.1]{h}, \cite[\S 3.3, \S 3.4]{sh-wh})\label{OMDM}
The doubly mixed insertion is a process to produce a new unshifted tableau from an unshifted tableau $T$ and 
$\begin{psmallmatrix}\alpha\\\beta\end{psmallmatrix}$, a column of a certain $\left(\p\cup\ol{\p}\right)$-biword (see Definition~\ref{dcb} below), by one of the two rules: namely either by an \textit{ordinary mixed insertion} if $\alpha\in\p$ or by a \textit{dual mixed insertion} if $\alpha\in\ol{\p}$. $T$ will be identified with $\tilde{T}$ containing $\infty$ in $\mathcal{Q}\setminus D(T)$ as described above. 
It is not difficult to verify that the modified tableaux actually qualifies as a tableau after each step.

In the ordinary mixed insertion, among both $\beta$ and those already written in $T$, unbarred letters are given the \textit{row nature} and barred letters are given the \textit{column nature}. These terms are used as a mnemonic for the way these letters move, as explained in the following description.

\begin{itemize}
\item\textbf{Ordinary Mixed Insertion}

This rule is applied if \uline{$\alpha\in\p$}.

\begin{enumerate} 
\item[Step 1. ]
If \uline{$\beta\in\p$}, read the first \uline{row} of $T$ from left to right, searching for the first cell $c$ containing a letter $\beta'$ with $\beta'>\beta$; if \uline{$\beta\in\ol{\p}$}, read the first \uline{column} of $T$ from top to bottom, searching for the first cell $c$  containing a letter $\beta'$ with $\beta'\geq\beta$. In either case, remember $\beta'$, modify $T$ by replacing the content of $c$ by $\beta$, and call the modified tableau $T'$. If $\beta'=\infty$, then $T'$ is the result of inserting 
$\begin{psmallmatrix}\alpha\\\beta\end{psmallmatrix}$
into $T$; otherwise proceed to Step 2. 

\item[Step 2. ]
If \uline{$\beta'\in\p$}, read the \uline{row} of $T'$ immediately below $c$ from left to right, searching for the first cell $c'$ containing a letter $\beta''$ with $\beta''>\beta'$; 
if \uline{$\beta'\in\ol{\p}$}, read the \uline{column} of $T'$ immediately to the right of $c$ from top to bottom, searching for the first cell $c'$ containing a letter $\beta''$ with $\beta''\geq\beta'$. In either case, remember $\beta''$, modify $T'$ by replacing the content of $c'$ by $\beta'$, and call the modified tableau $T''$. If $\beta''=\infty$, then $T''$ is the result of inserting 
$\begin{psmallmatrix}\alpha\\\beta\end{psmallmatrix}$
into $T$; otherwise rename $T''$, $\beta''$ and $c'$ as $T'$, $\beta'$ and $c$, respectively, and repeat Step 2.  
\end{enumerate}
This process will terminate since $c'$ will eventually be a cell in $\mathcal{Q}\setminus D(T)$, whose content is $\infty$. This is because if $c'=(i', j')$, at least one of $i'$ or $j'$ increases by undergoing a process of Step 2. The resulting tableau of this process is denoted by $T\m\beta$.

In the dual mixed insertion, we interchange the role of rows and columns, assigning the column nature to unbarred letters and the row nature to barred letters.

\item\textbf{Dual Mixed Insertion}

This rule is applied if \uline{$\alpha\in\ol{\p}$}.

\begin{enumerate}
\item[Step 1. ]
If \uline{$\beta\in\p$}, read the first \uline{column} of $T$ from top to bottom, searching for the first cell $c$ containing a letter $\beta'$ with $\beta'\geq\beta$; if \uline{$\beta\in\ol{\p}$}, read the first \uline{row} of $T$ from left to right, searching for the first cell $c$ containing a letter $\beta'$ with $\beta'>\beta$. The rest of Step 1 is the same as Step 1 in the description of the ordinary mixed insertion case. 
\item[Step 2. ]
If \uline{$\beta'\in\p$}, read the \uline{column} of $T'$ immediately to the right of $c$ from top to bottom, searching for the first cell $c'$ containing a letter $\beta''$ with $\beta''\geq\beta'$; if \uline{$\beta'\in\ol{\p}$}, read the \uline{row} of $T'$ immediately below $c$ from left to right, searching for the first cell $c'$ containing a letter $\beta''$ with $\beta''>\beta'$. The rest of Step 2 is the same as Step 2 in the description of the ordinary mixed insertion case. 
\end{enumerate}
For the same reason as the ordinary mixed insertion case, this process will terminate. 
The resulting tableau of this process is denoted by $T\dm\beta$. 
\end{itemize}
\end{dfn}

The doubly mixed insertion is an insertion algorithm defined for a certain type of $\left(\p\cup\ol{\p}\right)$-biword called doubly colored biword. 
We need an operator $(\cdot)^\text{neg}:\p\cup\ol{\p}\to\p\cup(-\p)$ defined by
\begin{align*}
\alpha^\text{neg}=\begin{cases} \al & \text{if $\alpha$ is unbarred, }\\
-|\al|&
\text{if $\alpha$ is barred} \\
\end{cases}
\end{align*}
where $\p\cup(-\p)$ has the ordering induced from that of $\mathbb{Z}$.
\begin{dfn}\label{dcb}
A \textit{doubly colored biword} is a $\left(\p\cup\ol{\p}\right)$-biword
$\pi=\begin{psmallmatrix}
\alpha_1& \dots & \alpha_n \\
\beta_1& \dots & \beta_n \\
\end{psmallmatrix}
$
which satisfies the following conditions: 
\begin{itemize}
\item $\alpha_1\dots\alpha_n$ is weakly increasing. 
\item If $\alpha_k=\alpha_{k+1}$, both unbarred, then $\beta_k^\text{neg}\leq\beta_{k+1}^\text{neg}$ holds ($1\leq k\leq n-1$). 
\item If $\alpha_k=\alpha_{k+1}$, both barred, then $\beta_k^\text{neg}\geq\beta_{k+1}^\text{neg}$ holds ($1\leq k\leq n-1$). 
\end{itemize}
The set of doubly colored biwords of length $n$ is denoted by $ DC_n$. 
\end{dfn}

\begin{ex}
$\begin{psmallmatrix}
\ol{1}&\ol{1}&\ol{2}&\ol{2}&\ol{2}&2&2&2&2\\
3&1&2&\ol{1}&\ol{3}&\ol{1}&2&2&4\\
\end{psmallmatrix}
\in DC_9$ and 
$\begin{psmallmatrix}
\ol{1}&\ol{1}&\ol{1}&\ol{1}&1&1&1&\ol{2}&\ol{2}&\ol{2}\\
1&\ol{1}&\ol{2}&\ol{2}&\ol{4}&1&1&3&2&\ol{2}\\
\end{psmallmatrix}
\in DC_{10}$. 
\end{ex}

\begin{dfn}(the doubly mixed insertion \cite[\S 3.4]{sh-wh})\label{DM}
For $\pi=\begin{psmallmatrix}
\alpha_1& \dots & \alpha_n \\
\beta_1& \dots & \beta_n \\
\end{psmallmatrix}
\in DC_n$, we construct two sequences of unshifted tableaux: $P_0, P_1, \dots, P_n$, which leads to the insertion tableau $P_{M*}(\pi)$, and $Q_0, Q_1, \dots, Q_n$, which leads to the recording tableau $Q_{M*}(\pi)$. Let $P_0=\emptyset$ and $Q_0=\emptyset$. For each integer $k$ with $1\leq k\leq n$, $(P_k, Q_k)$ is defined inductively based on $(P_{k-1}, Q_{k-1})$ as follows: 
We define
\begin{align*}
P_k=\begin{cases}P_{k-1}\m\beta_k & \text{if $\alpha_k$ is unbarred, }\\
P_{k-1}\dm\beta_k& \text{if $\alpha_k$ is barred. }\end{cases}
\end{align*}
$Q_k$ is defined to be the unshifted tableau obtained by adding to $Q_{k-1}$ the cell $D(P_k)\setminus D(P_{k-1})$ filled with $\alpha_k$. 
We call $P_n$ the \textit{insertion tableau} of $\pi$ by the doubly mixed insertion and write it as $P_\text{M*}(\pi)$. Similarly, $Q_n$ is called the \textit{recording tableau} of $\pi$ by the doubly mixed insertion and written as $Q_\text{M*}(\pi)$. 
\end{dfn}
\begin{ex}\label{dmeg}
For
$\pi=\begin{psmallmatrix}
\ol{1}&\ol{1}&1&2&\ol{3}&3 \\
3&2&\ol{2}&1&\ol{1}&\ol{1} \\
\end{psmallmatrix}
\in DC_6$, the doubly mixed insertion proceeds as follows: 
\begin{align*}
&&(P_0, Q_0)&=(\emptyset, \emptyset) \\
\emptyset\dm3&=\begin{ytableau}3\end{ytableau}& (P_1, Q_1)&=\left(\,
\begin{ytableau}3\end{ytableau}\,, 
\begin{ytableau}\ol{1}\end{ytableau}\,
\right)\\
\begin{ytableau}\blue{3}\end{ytableau}\dm2&=\begin{ytableau}2&3\end{ytableau}& (P_2, Q_2)&=\left(\,
\begin{ytableau}2&3\end{ytableau}\,, 
\begin{ytableau}\ol{1}&\ol{1}\end{ytableau}\,
\right)\\
\begin{ytableau}\red{2}&3\end{ytableau}\m\ol{2}&=\begin{ytableau}\ol{2}&3\\2\end{ytableau}& (P_3, Q_3)&=\left(\,
\begin{ytableau}\ol{2}&3\\2\end{ytableau}\,, 
\begin{ytableau}\ol{1}&\ol{1}\\1\end{ytableau}\,
\right)\\
\begin{ytableau}\blue{\ol{2}}&\red{3}\\2\end{ytableau}\m1&=\begin{ytableau}1&\ol{2}\\2&3\end{ytableau}& (P_4, Q_4)&=\left(\,
\begin{ytableau}1&\ol{2}\\2&3\end{ytableau}\,, 
\begin{ytableau}\ol{1}&\ol{1}\\1&2\end{ytableau}\,
\right)\\
\begin{ytableau}\blue{1}&\red{\ol{2}}\\\blue{2}&\blue{3}\end{ytableau}\dm\ol{1}&=\begin{ytableau}\ol{1}&1&3\\\ol{2}&2\end{ytableau}& (P_5, Q_5)&=\left(\,
\begin{ytableau}\ol{1}&1&3\\\ol{2}&2\end{ytableau}\,, 
\begin{ytableau}\ol{1}&\ol{1}&\ol{3}\\1&2\end{ytableau}\,
\right)\\
\begin{ytableau}\blue{\ol{1}}&\red{1}&3\\\blue{\ol{2}}&\red{2}\end{ytableau}\m\ol{1}&=\begin{ytableau}\ol{1}&\ol{1}&3\\1&\ol{2}\\2\end{ytableau}& (P_6, Q_6)&=\left(\,
\begin{ytableau}\ol{1}&\ol{1}&3\\1&\ol{2}\\2\end{ytableau}\,, 
\begin{ytableau}\ol{1}&\ol{1}&\ol{3}\\1&2\\3\end{ytableau}\,
\right)\\
\end{align*}
Thus
\begin{align}
(P_\text{M*}(\pi), Q_\text{M*}(\pi))=\left(\,
\begin{ytableau}\ol{1}&\ol{1}&3\\1&\ol{2}\\2\end{ytableau}\,, 
\begin{ytableau}\ol{1}&\ol{1}&\ol{3}\\1&2\\3\end{ytableau}\,
\right). \label{dmeq1}
\end{align}
Here, after being bumped, the letters that are written in red enter the next row, and the letters that are written in blue enter the next column. 

\end{ex}
For $\lambda\in\mathcal{P}(n)$, let $\text{UNSHTab}(\lambda)$ be the set of unshifted tableaux of shape $\lambda$. 
The doubly mixed insertion gives a Robinson--Schensted-type correspondence as follows: 
\begin{thm}(\cite[Theorem~22]{sh-wh})\label{DMRS}
Let $n$ be a positive integer. The map which sends $\pi\in DC_n$ to a pair of same-shape unshifted tableaux with $n$ cells defined by
$$
\begin{array}{ccc} DC_n & \stackrel{}{\longrightarrow} & \displaystyle \coprod_{\lambda\in\mathcal{P}(n)}\text{UNSHTab}(\lambda)\times\text{UNSHTab}(\lambda) \\
\rotatebox{90}{$\in$} & & \rotatebox{90}{$\in$} \\
\pi & \longmapsto & \left(P_\text{M*}(\pi), Q_\text{M*}(\pi)\right)
\end{array}
$$
is bijective. 
We use the notation $\pi\xlr{\!\!\!\!\!\text{M*}\!\!\!\!\!}(T, U)$ to indicate that $\pi$ corresponds to a pair of tableaux $(T, U)$ in this correspondence. 
\end{thm}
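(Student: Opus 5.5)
To prove Theorem~\ref{DMRS}, I will construct an explicit inverse map by reversing the insertion step by step. I do not try to derive the result from the classical Robinson--Schensted correspondence.

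The first ingredient is well-definedness. For every unshifted tableau $T$ and every letter $\beta$, both $T\m\beta$ and $T\dm\beta$ should be unshifted tableaux whose shape is $D(T)$ plus exactly one outer corner. I check this locally at each bump. A row-nature letter $\beta'$ replaces the first entry $>\beta'$ in the next row. A column-nature letter replaces the first entry $\geq\beta'$ in the next column. The standard Schensted argument then shows that conditions (1) and (2) of Definition~\ref{unshifted} are preserved. The one delicate configuration is a barred letter bumped along a column and then landing next to an equal barred letter. This is exactly where the strict versus weak inequalities in Definition~\ref{unshifted} are arranged so that equal barred letters never share a column and equal unbarred letters never share a row.

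The second ingredient is that $Q_{M*}(\pi)$ is an unshifted tableau. Since $\alpha_1\dots\alpha_n$ is weakly increasing, it suffices to study consecutive insertions with the same $\alpha$. I will prove a \emph{bumping-route lemma}. Suppose $\alpha$ is unbarred (ordinary mixed insertion) and $\beta_k^{\text{neg}}\leq\beta_{k+1}^{\text{neg}}$. Then the new cell of the $(k+1)$-st insertion lies strictly to the right of, and weakly above, the new cell of the $k$-th. So a run of equal unbarred $\alpha$'s records a horizontal strip, which is compatible with condition (2) for $Q$. Dually, for barred $\alpha$ with $\beta_k^{\text{neg}}\geq\beta_{k+1}^{\text{neg}}$, the new cells form a vertical strip, matching the strict column condition (1) for barred entries. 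I will prove this lemma by induction along the two bumping paths, showing the second path stays weakly right of the first at each row or column it visits. This requires a case split on the natures of the two currently bumped letters: row/row, row/column, column/row and column/column. The comparison via $\beta^{\text{neg}}$ is precisely what makes each case go through.

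The third ingredient is the inverse. Given $(T,U)$ of the same shape, let $\alpha$ be the largest entry of $U$. Among the cells containing it, choose the rightmost one if $\alpha$ is unbarred, or the lowest one if $\alpha$ is barred. Remove that cell from $T$ and reverse-bump the letter $\gamma$ it contained. If $\gamma$ is row-natured for the current rule, it goes to the previous row and replaces the last entry $<\gamma$. If it is column-natured, it goes to the previous column and replaces the last entry $\leq\gamma$. I continue until a letter $\beta$ is ejected from the first row or column; this produces the column $\begin{psmallmatrix}\alpha\\\beta\end{psmallmatrix}$.

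I then need two facts. First, the reverse step is a genuine inverse of the forward step on a single letter. This is a local check that each forward rule is inverted by the corresponding reverse rule, using that the letter's nature is intrinsic to the letter and the rule. Second, the biword produced by iterating the reverse step satisfies the ordering conditions of Definition~\ref{dcb}. This is the converse of the bumping-route lemma: peeling off the extreme cell of a horizontal (resp.\ vertical) strip yields $\beta$'s in the required monotone order. Together these give two-sided inverses, hence bijectivity.

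I expect the main obstacle to be the bumping-route lemma and its converse, specifically the mixed cases. There the first path's letter travels along a row while the second path's letter travels along a column, or vice versa. I will handle these by tracking the invariant "the second path's current cell is weakly right of and weakly above the first path's cell in the same diagonal $j-i$". Indexing by diagonals rather than rows accommodates both kinds of moves uniformly.
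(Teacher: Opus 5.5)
Your overall strategy, an explicit inverse by reverse bumping plus a bumping-route lemma for equal $\alpha$'s, is the standard one. It is also what the paper does for Theorems~\ref{WSMRS} and~\ref{GMRS}; for this theorem the paper gives no proof and only cites Shimozono--White. However, your second and third ingredients rest on a misreading of Definition~\ref{unshifted}. Condition (1) is strict and condition (2) is weak for \emph{every} letter, regardless of color. So equal letters of either color may share a row but never a column. Your remark that ``equal unbarred letters never share a row'' is therefore wrong, and it also contradicts your own horizontal-strip claim for unbarred $\alpha$.

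It follows that each value in $Q_{\text{M*}}(\pi)$, barred or not, must occupy a horizontal strip. A vertical strip of equal barred entries would violate condition (1) rather than match it. Your route lemma for barred $\alpha$ is in fact false. The dual rule is not the transpose of the ordinary one, because rows are still searched with $>$ and columns with $\geq$. When $\alpha_k=\alpha_{k+1}$ is barred and $\beta_k^{\text{neg}}\geq\beta_{k+1}^{\text{neg}}$, the new cell again lies strictly to the right of and weakly above the previous one. Example~\ref{dmeg} already shows this: $\alpha_1=\alpha_2=\ol{1}$ fill $(1,1)$ and $(1,2)$.

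A sharper instance: $\pi=\begin{psmallmatrix}1&\ol{2}&\ol{2}\\2&3&1\end{psmallmatrix}\in DC_3$ gives $P_{\text{M*}}(\pi)=\begin{ytableau}1&2\\3\end{ytableau}$ and $Q_{\text{M*}}(\pi)=\begin{ytableau}1&\ol{2}\\\ol{2}\end{ytableau}$. Here the second $\ol{2}$ sits at $(1,2)$, up and to the right of the first, which is at $(2,1)$.

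Your inverse consequently breaks on this pair. Removing the lowest $\ol{2}$ first ejects $3$ directly from the first column, and the procedure returns $\begin{psmallmatrix}1&\ol{2}&\ol{2}\\2&1&3\end{psmallmatrix}$. That biword is not even in $DC_3$, since it would require $1^{\text{neg}}\geq 3^{\text{neg}}$.

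To fix this, state and prove the route lemma as ``strictly right, weakly above'' for both rules. The dual case needs its own case analysis, with unbarred letters now column-natured and compared by $\geq$; it cannot be obtained by transposing the ordinary case. The inverse should always remove the rightmost cell containing the largest entry of $U$. The converse, that this peeling returns $\beta$'s satisfying the barred-case condition of Definition~\ref{dcb}, must likewise be proved for the dual rule in this horizontal form.

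Your unbarred-case lemma and your local reverse rules are fine. The reverse rules (last entry $<\gamma$ in the previous row, last entry $\leq\gamma$ in the previous column) work because, within a fixed rule, a row-natured letter and a column-natured letter have different colors and so are never equal.
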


Next, we recall the notion of the self-duality of the doubly mixed insertion by Shimozono and White. 
\begin{dfn}(\cite[\S 3.1]{sh-wh})
Let $\pi=\begin{psmallmatrix}
\alpha_1& \dots & \alpha_n \\
\beta_1& \dots & \beta_n \\
\end{psmallmatrix}$ be a doubly colored biword. 
The doubly colored biword obtained by rearranging
$\begin{psmallmatrix}
\beta_1 \\
\alpha_1 \\
\end{psmallmatrix}, \dots, \begin{psmallmatrix}
\beta_n \\
\alpha_n \\
\end{psmallmatrix}$ in the order that matches the definition of $DC_n$ is called the inverse of $\pi$, written $\pi^{-1}$. 
\end{dfn}

The self-duality of the doubly mixed insertion is stated as follows:  

\begin{thm}(\cite[Theorem~21]{sh-wh})\label{DMd}
Let $\pi$ be a doubly colored biword. If $\pi\xlr{\!\!\!\!\!\text{M*}\!\!\!\!\!}(T, U)$, then $\pi^{-1}\xlr{\!\!\!\!\!\text{M*}\!\!\!\!\!}(U, T)$ holds.  
\end{thm}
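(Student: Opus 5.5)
The statement is Shimozono and White's self-duality of the doubly mixed insertion, and the paper cites it from \cite{sh-wh}. If I were to prove it myself, I would use Fomin's growth diagrams, equivalently a local "shadow" description of the bumping. The aim is to realize the pair $(P_{M*}(\pi),Q_{M*}(\pi))$ as data read off from a two-dimensional array indexed by the letters appearing in the top and bottom rows of $\pi$, and then to observe that the local rules are symmetric under transposing the array. Transposition is exactly $\pi\mapsto\pi^{-1}$, so it would yield $\pi^{-1}\xlr{\!\!\!\!\!\text{M*}\!\!\!\!\!}(U,T)$.

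First, I would reduce to the standard case. Using the ordering of columns required in $DC_n$, I would standardize $\pi$: replace the top letters by $1,\dots,n$ with their colors remembered, and likewise the bottom letters. Equal unbarred letters get left-to-right increasing labels and equal barred letters get right-to-left increasing labels. This matches the $\text{neg}$ conventions in Definition~\ref{dcb} and the tie-breaking $>$ versus $\geq$ in Definition~\ref{OMDM}. I would check that the doubly mixed insertion commutes with this standardization on both $P$ and $Q$. The point is that the strict/weak inequalities make equal unbarred letters behave as increasing along rows and equal barred letters as increasing down columns. Inversion also commutes with standardization. After this step it suffices to treat colored permutations in which every letter is distinct and carries a color.

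Second, for a colored permutation, I would consider the grid $[0,n]^2$ and assign to each lattice point $(i,j)$ the shape of the tableau obtained by inserting, in order, those columns of $\pi$ whose top label is at most $i$, restricted to bottom labels at most $j$. I must show that restricting to bottom letters $\le j$ commutes with doubly mixed insertion, meaning that deleting the largest letter from $P_{M*}$ corresponds to deleting that letter from the word. This holds for mixed insertion because large letters never affect the paths of smaller ones; it needs checking for both the row-natured and column-natured bumps. I would then derive the local rule: for a unit square with corners $\lambda,\mu,\nu$ known, the fourth corner $\rho$ is determined by $\lambda,\mu,\nu$, by whether a point of $\pi$ lies in the square, and by the colors of the top letter $i$ and the bottom letter $j$. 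In the colorless case this is Fomin's RSK rule. With colors, the growth from $\mu$ to $\rho$ when $\lambda\subset\mu,\nu$ are distinct is forced. When $\mu=\nu$, the new cell should go to the next row or the next column according to whether the bumped letter has row or column nature. That nature is determined by the color of $j$ combined with the color of $i$, since the color of the top letter decides ordinary versus dual.

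The main obstacle is this local rule. In particular, I must verify that the decision "next row vs. next column" depends symmetrically on the pair of colors $(\text{color}(i),\text{color}(j))$. That symmetry is what the combination of ordinary and dual insertion is designed for: the rule depends only on whether the colors agree. Once the rule is symmetric under swapping the roles of $i$ and $j$, transposing the grid gives the same growth diagram for $\pi^{-1}$. The chains along the top and right edges recover $Q$ and $P$, with colors recorded by the boundary labels. Hence the two tableaux are swapped, which proves Theorem~\ref{DMd}.
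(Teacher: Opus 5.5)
The paper does not prove this theorem (it is quoted from \cite{sh-wh}), so your argument has to stand on its own. Its core is sound for biwords whose top letters are pairwise distinct and whose bottom letters are pairwise distinct. Bumping chains are increasing, so restricting to bottom letters $\le j$ commutes with the insertion. The local rule is then as follows. If $\mu\neq\nu$, then $\rho=\mu\cup\nu$. If $\mu=\nu\neq\lambda$, the letter $j$ is bumped out of the cell $\mu/\lambda$ and appended at the end of the row below it or of the column to its right. If $\mu=\nu=\lambda$ and the square contains a point, $j$ is appended to the first row or the first column. In both of the last two cases the row is chosen exactly when $i$ and $j$ have the same colour. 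That condition is symmetric, so transposition does yield the diagram of $\pi^{-1}$ and swaps the two boundary chains.

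The gap is the reduction to that case. Your tie-breaking looks only at the colour of the repeated bottom letter, and it does not commute with the insertion.
\begin{itemize}
\item Take $\pi=\begin{psmallmatrix}1&\ol{2}\\1&1\end{psmallmatrix}\in DC_2$. The first step gives the single cell $1$. In the second, dual, step the unbarred $1$ has column nature, so it bumps the $1$ in $(1,1)$ (the test is $\geq$), and that $1$ moves to column $2$. Hence $P_{\text{M*}}(\pi)$ is the one-row tableau $1\,1$.
\item With your left-to-right labels $1_1<1_2$, the dual insertion of $1_2$ passes $1_1$ and lands in $(2,1)$. Destandardizing gives a column containing two $1$'s.
\item Inversion fails to commute with your rule as well. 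Standardizing $\pi^{-1}=\begin{psmallmatrix}1&1\\\ol{2}&1\end{psmallmatrix}$ and inverting forces the bottom labels of $\pi$ to be $1_2,1_1$.
\end{itemize}
The picture behind your rule is also off. By Definition~\ref{unshifted}, columns are strict for every letter, so equal letters of either colour lie in horizontal strips of both $P_{\text{M*}}$ and $Q_{\text{M*}}$ (see the two $\ol{1}$'s in Example~\ref{dmeg}); barred letters never repeat down a column.

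Whether a later copy of $\beta$ bumps an earlier one depends on its nature, i.e.\ on whether the colour of its \emph{top} letter agrees with that of $\beta$. The correct rule is:
\begin{itemize}
\item first label the copies of $\beta$ whose top letter has the opposite colour, from right to left;
\item then label those whose top letter has the same colour, from left to right;
\item label top letters left to right regardless of colour.
\end{itemize}
Equivalently, the bottom labels of $\pi$ follow the order of the columns of $\pi^{-1}$ prescribed by the $\text{neg}$ conventions of Definition~\ref{dcb}. This is exactly what makes standardization commute with inversion.

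You then still need a lemma that insertion commutes with this standardization. One route is the invariant that the copies of each letter form a horizontal strip labelled increasingly from left to right. A new copy receives the largest or the smallest label according as it enters with row or column nature. All copies of one letter moving during a single insertion share the same nature, so they either pass over the equal copies or bump them successively to the right.
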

\begin{ex}
We have $\pi^{-1}=\begin{psmallmatrix}
\ol{1}&\ol{1}&1&\ol{2}&2&3 \\
3&\ol{3}&2&1&\ol{1}&\ol{1} \\
\end{psmallmatrix}$ for $\pi$ in Example~\ref{dmeg}. The doubly mixed insertion for $\pi^{-1}$ proceeds as follows: 
\begin{align*}
&&(P_0, Q_0)&=(\emptyset, \emptyset) \\
\emptyset\dm3&=\begin{ytableau}3\end{ytableau}& (P_1, Q_1)&=\left(\,
\begin{ytableau}3\end{ytableau}\,, 
\begin{ytableau}\ol{1}\end{ytableau}\,
\right)\\
\begin{ytableau}\blue{3}\end{ytableau}\dm\ol{3}&=\begin{ytableau}\ol{3}&3\end{ytableau}& (P_2, Q_2)&=\left(\,
\begin{ytableau}\ol{3}&3\end{ytableau}\,, 
\begin{ytableau}\ol{1}&\ol{1}\end{ytableau}\,
\right)\\
\begin{ytableau}\blue{\ol{3}}&\red{3}\end{ytableau}\m2&=\begin{ytableau}2&\ol{3}\\3\end{ytableau}& (P_3, Q_3)&=\left(\,
\begin{ytableau}2&\ol{3}\\3\end{ytableau}\,, 
\begin{ytableau}\ol{1}&\ol{1}\\1\end{ytableau}\,
\right)\\
\begin{ytableau}\blue{2}&\red{\ol{3}}\\\blue{3}\end{ytableau}\dm1&=\begin{ytableau}1&2\\\ol{3}&3\end{ytableau}& (P_4, Q_4)&=\left(\,
\begin{ytableau}1&2\\\ol{3}&3\end{ytableau}\,, 
\begin{ytableau}\ol{1}&\ol{1}\\1&\ol{2}\end{ytableau}\,
\right)\\
\begin{ytableau}\red{1}&2\\\blue{\ol{3}}&\red{3}\end{ytableau}\m\ol{1}&=\begin{ytableau}\ol{1}&2\\1&\ol{3}\\3\end{ytableau}& (P_5, Q_5)&=\left(\,
\begin{ytableau}\ol{1}&2\\1&\ol{3}\\3\end{ytableau}\,, 
\begin{ytableau}\ol{1}&\ol{1}\\1&\ol{2}\\2\end{ytableau}\,
\right)\\
\begin{ytableau}\blue{\ol{1}}&\red{2}\\1&\blue{\ol{3}}\\3\end{ytableau}\m\ol{1}&=\begin{ytableau}\ol{1}&\ol{1}&\ol{3}\\1&2\\3\end{ytableau}& (P_6, Q_6)&=\left(\,
\begin{ytableau}\ol{1}&\ol{1}&\ol{3}\\1&2\\3\end{ytableau}\,, 
\begin{ytableau}\ol{1}&\ol{1}&3\\1&\ol{2}\\2\end{ytableau}\,
\right)\\
\end{align*}
Thus
\begin{align}
\left(P_\text{M*}\left(\pi^{-1}\right), Q_\text{M*}\left(\pi^{-1}\right)\right)=\left(\,
\begin{ytableau}\ol{1}&\ol{1}&\ol{3}\\1&2\\3\end{ytableau}\,, 
\begin{ytableau}\ol{1}&\ol{1}&3\\1&\ol{2}\\2\end{ytableau}\,
\right). \label{dmeq2}
\end{align}
Here, the red or blue letters are used in the same sense as in Example~\ref{dmeg}. Comparing \eqref{dmeq2} with \eqref{dmeq1}, the order of the tableaux is swapped, as Theorem~\ref{DMd} asserts.
\end{ex}

\section{Shifted Tableaux, the Worley--Sagan Insertion, and Haiman's Mixed Insertion}
This section reviews the Worley--Sagan insertion, Haiman's mixed insertion, and the duality between them, which form the foundation of this paper. We define the Worley--Sagan insertion in Definition~\ref{WS} and Haiman's mixed insertion in Definition~\ref{HM}.
Our goal is to extend these insertion algorithms; however, since our extensions incorporate features from both, it is useful to examine them closely.

We begin by introducing shifted tableaux, the type of tableaux into which both insertions are defined. The arrangement of cells in a shifted tableau corresponds to a distinct partition.
\begin{dfn}[distinct partition]
Let $n$ be a non-negative integer.
A partition $\lambda=(\lambda_1,\dots,\lambda_\ell)$ of $n$ is called a \textit{distinct partition} of $n$ if its parts are distinct, that is, $\lambda_1>\cdots>\lambda_\ell$.
Note that $\emptyset$ is the unique distinct partition of $0$.
We write $\mathcal{P}_d(n)$ for the set of distinct partitions of $n$, and let $\mathcal{P}_d=\coprod_{n\geq 0}\mathcal{P}_d(n)$ be the set of all distinct partitions.
\end{dfn}
Shifted diagrams and shifted tableaux can be defined as follows.
\begin{dfn}[shifted tableaux]\label{shifted}
Let $\lambda$ be a distinct partition. 
The \textit{shifted diagram of shape $\lambda$} is 
\[D'(\lambda)=\{\,(i, j)\in\mathbb{P}^2 \mid 1\le i\le \ell(\lambda), i\leq j\leq \lambda_i+i-1\,\}.\] 
A \textit{shifted tableau of shape $\lambda$} is a map $P:D'(\lambda)\to \p\cup\ol{\p}$ which satisfies
the following four conditions: 
\begin{itemize}
\item If $(i, j), (i+1, j)\in D'(\lambda)$, then $P(i, j)\leq P(i+1, j)$.
\item If $(i, j), (i, j+1)\in D'(\lambda)$, then $P(i, j)\leq P(i, j+1)$.
\item For $j, a\in\p$, there exists at most one $i\in\p$ which satisfies $(i, j)\in D'(\lambda)$ and $P(i, j)=a$. 
\item For $i, a\in\p$, there exists at most one $j\in\p$ which satisfies $(i, j)\in D'(\lambda)$ and $P(i, j)=\ol{a}$.
\end{itemize}
We also use the symbol $D'(P)$ instead of $D'(\lambda)$ to stand for the set consisting of the cells of $P$. 
Shifted diagrams and shifted tableaux are illustrated in the same manner as unshifted diagrams and unshifted tableaux. For example, the shifted diagram of shape $\lambda=(6, 4, 2, 1)$ and an example of a shifted tableau of shape $\lambda$ are shown in the following figure: 
\[
\ytableausetup{centertableaux}
\hspace{10pt}
\ydiagram{6, 1+4, 2+2, 3+1}
\hspace{60pt}
\mathop{\begin{ytableau}
1&\ol{2}&2&3&3&3\\
\none &\ol{2}&3&\ol{5}&6\\
\none &\none &\ol{4} & \ol{5} \\
\none &\none &\none &\ol{5}
\end{ytableau}}
\]
We define shifted $\p$-tableaux to be shifted tableaux with no barred letters written in it. 
For shifted tableaux, the letters written in its diagonal cells are called \textit{diagonal} letters. 
As an agreement, the empty map $\emptyset\to\p\cup\ol{\p}$ is regarded as a unique shifted tableau of shape $\emptyset$, and is denoted by $\emptyset$. 
\end{dfn}

Let $P$ be a shifted tableau. We define a total order on $\p\cup\ol{\p}\cup\{\infty\}$ which is an extension of the total order on $\p\cup\ol{\p}$ and satisfies $\alpha<\infty$ for all $\alpha\in \p\cup\ol{\p}$. We sometimes identify $P$ with a map 
$\tilde{P}:\mathcal{O}\to \p\cup\ol{\p}\cup\{\infty\}$ such that 
\begin{align*}
\tilde{P}(c)=\begin{cases}P(c) & \text{if $c\in D(P)$, }\\
\infty & \text{if $c\in\mathcal{O}\setminus D(P)$. }
\end{cases}
\end{align*}

For later use, we define a (disjoint) union of maps as follows: 

\begin{dfn}
Let $\mathcal{A}_1, \dots, \mathcal{A}_n$ be a collection of pairwise disjoint subsets of $\mathcal{Q}$ and $P_i: \mathcal{A}_i\to\p\cup\ol{\p}$ be a map defined on $\mathcal{A}_i$. 
The union $P$ of $P_1, \dots, P_n$ is defined to be a map
\[ P: \mathcal{A}_1\cup\dots\cup\mathcal{A}_n\to\p\cup\ol{\p}
\]
which satisfies $P|_{\mathcal{A}_i}=P_i$ ($1\leq i\leq n$). We denote this map by $P_1\cup\dots\cup P_n$. 

For a map $P$ from a subset $\mathcal{A}$ of $\mathcal{Q}$ to $\p\cup\ol{\p}$, we define a map $|P|$ by $|P|(c)=|P(c)|$ for all $c\in\mathcal{A}$. 
\end{dfn}

\begin{rmk}
In this paper, we only treat shifted tableaux $P$ such that each positive integer appears at most once in $|P|$. 
\end{rmk}

Next, we recall the definition of the Worley--Sagan insertion. This is a process in which we produce a new shifted $\p$-tableau by adding a letter to a shifted $\p$-tableau. A distinctive feature of this insertion is that we change the way of bumping letters when a diagonal letter is bumped. The bumping of a diagonal letter occurs at most once in this insertion, and until that happens bumped letters are sent to the next rows. Upon the diagonal bumping and afterwards, the bumped letters are sent to the next columns.

\begin{dfn}[the Worley--Sagan insertion \cite{sa}, \cite{wo}]\label{WS}

Let $P$ be a shifted $\p$-tableau in which each element of $\p$ appears at most once, and let $a\in\p$ be a letter that does not appear in $P$.
The Worley--Sagan insertion is an operation that produces a new $\p$-tableau from $P$ and $a$, by applying the \textit{row bumping} and the \textit{column bumping} together with the \textit{rule change} from the former to the latter, following the procedure below. Here the row bumping moves a letter to the next row, while the column bumping moves a letter to the next column; in the terminology of Definition~\ref{OMDM}, every letter has the \uline{row nature} in the row bumping and the \uline{column nature} in the column bumping, all the letters being unbarred in the present situation.

The procedure starts by setting the rule to be the \uline{row} bumping. Then, just as if $a$ came from a row above the first row, we search the first \uline{row} of $P$ from left to right for an entry larger than $a$, and letting $a'$ be the first encountered such entry and $c'$ the cell containing it, we form a tableau $P'$ from $P$ by replacing the entry of the cell $c'$ by $a$; namely by letting $a$ bump out its old entry $a'$. If $a'=\infty$, the procedure terminates here with $P'$ the result of the insertion.

Otherwise we proceed, keeping the \uline{row} bumping in force, and we search the next \uline{row} of $P'$ below $c'$ from left to right for an entry larger than $a'$. Letting $a''$ be the first encountered such entry in this search and $c''$ the cell containing it, we form a tableau $P''$ from $P'$ by replacing the entry of the cell $c''$ by $a'$. If $a''=\infty$, the procedure terminates here with $P''$ the result of the insertion.

Otherwise we still keep the \uline{row} bumping and proceed to produce $P'''$, $P''''$, and so on, until either the procedure terminates as above, or the special situation develops where bumping occurs at a diagonal cell $e$. Since the \uline{row} bumping is in force, the letter bumped out of $e$ has the \uline{row nature}; this is an offending situation, since such a letter sent to anywhere in the $\mathcal{O}$-part of the next row would violate the tableau condition: either the resulting shape would not be a shifted diagram, or the transported letter would be less than the entry directly above its new position.
Now if this occurs, we call for the \textit{rule change} here and switch from the \uline{row} bumping to the \uline{column} bumping, so that the letter bumped out of $e$ will have the \uline{column nature} in the new rule.

Now we proceed to modify the tableau under the new rule: we search the next \uline{column} to the right of $e$ from top to bottom for an entry larger than the letter bumped out of $e$, and go on in the same manner, sending each letter bumped out of a cell to the next \uline{column} to the right of that cell, until the procedure terminates as above. Note that after switching to \uline{column} bumping, a bumping at a diagonal cell will never occur again, since in the search for a place for a letter bumped out of the previous column, the right-hand neighbor of its old position contains a larger letter (or is empty); so the new position for the bumped letter will be either this position or some higher position, which is never diagonal.

We denote the result of this insertion process by $P\ws a$.

For a permutation $\pi=a_1\dots a_n\in S_n$, we construct two sequences of shifted tableaux: 
$P_0$, $P_1$, $\dots$, $P_n$ which leads to the insertion tableau $P_\text{WS}(\pi)$, and $Q_0$, $Q_1$, $\dots$, $Q_n$ which leads to the recording tableau $Q_\text{WS}(\pi)$. 
Let $P_0=\emptyset$ and $Q_0=\emptyset$. For each integer $k$ with $1\leq k\leq n$, a tableaux pair $(P_k, Q_k)$ is constructed inductively based on $(P_{k-1}, Q_{k-1})$ through the following process: 
Let $P_k=P_{k-1}\xleftarrow[]{\!\!\!\!\!\text{WS}\!\!\!\!\!} a_k$, and let $c_k$ be the cell added to $P_{k-1}$ by inserting $a_k$. That is, $c_k=D'(P_k)\setminus D'(P_{k-1})$. 
Define an element $\tilde{k}$ of $\p\cup\ol{\p}$ by
\begin{align*}
\tilde{k}=\begin{cases} k & \text{if the insertion of $a_k$ into $P_{k-1}$ ends up with a row bumping, }\\
\ol{k}&
\text{if the insertion of $a_k$ into $P_{k-1}$ ends up with a column bumping. } \\
\end{cases}
\end{align*}
Let $Q_k$ be the tableau obtained from $Q_{k-1}$ by adding the cell $c_k$ filled with $\tilde{k}$. 
We call $P_n$ the \textit{insertion tableau} of $\pi$ by the Worley--Sagan insertion and write it as $P_\text{WS}(\pi)$. Similarly, $Q_n$ is called the \textit{recording tableau} of $\pi$ by the Worley--Sagan insertion and written as $Q_\text{WS}(\pi)$. 
\end{dfn}
\begin{ex}\label{wseg}
For $\pi=25413\in S_5$, the Worley--Sagan insertion proceeds as follows: 
\begin{align*}
&&(P_0, Q_0)&=(\emptyset, \emptyset) \\
\emptyset\ws2&=\begin{ytableau}2\end{ytableau}& (P_1, Q_1)&=\left(\,
\begin{ytableau}2\end{ytableau}\,, 
\begin{ytableau}1\end{ytableau}\,
\right)\\
\begin{ytableau}2\end{ytableau}\ws5&=\begin{ytableau}2&5\end{ytableau}& (P_2, Q_2)&=\left(\,
\begin{ytableau}2&5\end{ytableau}\,, 
\begin{ytableau}1&2\end{ytableau}\,
\right)\\
\begin{ytableau}2&\red{5}\end{ytableau}\ws4&=\begin{ytableau}2&4\\\none&5\end{ytableau}& (P_3, Q_3)&=\left(\,
\begin{ytableau}2&4\\\none&5\end{ytableau}\,, 
\begin{ytableau}1&2\\\none&3\end{ytableau}\,
\right)\\
\begin{ytableau}\blue{2}&\blue{4}\\\none&5\end{ytableau}\ws1&=\begin{ytableau}1&2&4 \\ \none&5\end{ytableau}& (P_4, Q_4)&=\left(\,
\begin{ytableau}1&2&4 \\ \none&5\end{ytableau}\,, 
\begin{ytableau}1&2&\ol{4} \\ \none&3\end{ytableau}\,
\right)\\
\begin{ytableau}1&2&\red{4} \\ \none&\blue{5}\end{ytableau}\ws3&=\begin{ytableau}1&2&3 \\ \none&4&5\end{ytableau}& (P_5, Q_5)&=\left(\,
\begin{ytableau}1&2&3 \\ \none&4&5\end{ytableau}\,, 
\begin{ytableau}1&2&\ol{4} \\ \none&3&\ol{5}\end{ytableau}\,
\right)\\
\end{align*}

Thus 
\begin{align}
(P_\text{WS}(\pi), Q_\text{WS}(\pi))=\left(\,
\begin{ytableau}1&2&3 \\ \none&4&5\end{ytableau}\,, 
\begin{ytableau}1&2&\ol{4} \\ \none&3&\ol{5}\end{ytableau}\,
\right). \label{wseq}
\end{align}
Here, the red or blue letters are used in the same sense as in Example~\ref{dmeg}. Red or blue letters are bumped in the order from the smallest to the largest among them.
\end{ex}

A shifted $\p$-tableau with $n$ cells is called \textit{standard} if every integer $k$ such that $1\leq k\leq n$ appears exactly once in it. 
For a distinct partition $\lambda$ of $n$, we denote the set of standard shifted $\p$-tableaux of shape $\lambda$
by $\text{SSHTab}(\lambda)$. 
The set of tableaux obtained from standard shifted $\p$-tableaux of shape $\lambda$ by adding bars to some of its \uline{off-diagonal} letters is denoted $\text{SSHTab}'(\lambda)$. Note that adding no bars is allowed here, so $\text{SSHTab}(\lambda)\subset\text{SSHTab}'(\lambda)$. 
The Worley--Sagan insertion gives a Robinson--Schensted-type correspondence as follows. 
\begin{thm}[\cite{sa}, \cite{wo}]\label{WSRS}
Let $n$ be a positive integer. The map which sends $\pi\in S_n$ to a pair of same-shape shifted tableaux with $n$ cells
$$
\begin{array}{ccc} S_n & \stackrel{}{\longrightarrow} & \displaystyle \coprod_{\lambda\in\mathcal{P}_d(n)}\text{SSHTab}(\lambda)\times\text{SSHTab}'(\lambda) \\
\rotatebox{90}{$\in$} & & \rotatebox{90}{$\in$} \\
\pi & \longmapsto & \left(P_\text{WS}(\pi),\  Q_\text{WS}(\pi)\right)
\end{array}
$$
is bijective. We use the notation $\pi\xlr{\!\!\!\!\!\text{WS}\!\!\!\!\!}(P, Q)$ to indicate that $\pi$ corresponds to a pair of tableaux $(P, Q)$ in this correspondence. 
\end{thm}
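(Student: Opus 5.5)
The plan is to prove bijectivity of $\pi\mapsto(P_\text{WS}(\pi),Q_\text{WS}(\pi))$ in the classical way: show the map is well defined, show each single insertion step can be inverted, and compare cardinalities.

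\textbf{Well-definedness.} First I check that $P_\text{WS}(\pi)$ is a standard shifted $\p$-tableau. The letters are exactly $1,\dots,n$. The tableau condition is preserved at each insertion, by the same argument as for Schensted row bumping before the rule change and for column bumping after it. The remark in Definition~\ref{WS} already ensures that column bumping never reaches a diagonal cell, so the new cell always keeps the shape a shifted diagram. Next I check that $Q_\text{WS}(\pi)\in\text{SSHTab}'(\lambda)$. The underlying unbarred tableau is standard because cells are added in the order $1,\dots,n$. A bar appears only when the insertion ends with column bumping, and then the new cell is off-diagonal, again by the remark.

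\textbf{Reverse insertion.} Given a shifted standard tableau $P_k$ of shape $\lambda^{(k)}$, a corner cell $c_k$, and a flag $\tilde k\in\{k,\ol{k}\}$, with $c_k$ off-diagonal if the flag is barred, I define reverse bumping.

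If the flag is barred, the entry $x$ of $c_k$ is removed and reverse-column-bumped leftward. In the column to the left, $x$ replaces the largest entry smaller than $x$, and that entry continues leftward. When a letter lands in a diagonal cell $e$, the displaced diagonal letter switches nature. It is then reverse-row-bumped upward: in the row above, it replaces the largest entry smaller than it, and this continues until a letter exits from the first row. That letter is $a_k$.

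If the flag is unbarred, only reverse row bumping is used, starting from $c_k$.

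\textbf{Main obstacle.} The key step is to show that reverse bumping truly inverts forward insertion, especially at the rule change. Three things must be verified.
\begin{enumerate}
\item In the forward column phase, the bumping path moves weakly up. So when reversing, the letter moved leftward lands in the right cell, and it reaches the diagonal exactly at the cell $e$ where the forward rule change happened.
\item The letter bumped out of $e$ in the forward direction is precisely the one recovered by reverse row bumping from $e$.
\item Reverse bumping always produces a valid shifted tableau and a well-defined letter, so every triple $(P_k,c_k,\tilde k)$ with the constraint above comes from some insertion.
\end{enumerate}
I would prove these by the usual lemma on bumping paths: in row insertion the path moves weakly left as it goes down, and in column insertion it moves weakly up as it goes right. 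Both follow from the strict increase of entries, and they handle the diagonal interaction.

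\textbf{Conclusion.} Injectivity follows by peeling off $c_n,c_{n-1},\dots$ in the order recorded by $Q$. Each step recovers $a_k$ and $P_{k-1}$, and the standardized values of the recovered letters reconstruct $\pi$. For surjectivity, reverse insertion applied to any pair $(P,Q)$ yields a sequence of $n$ distinct letters, which must be exactly $1,\dots,n$, hence a permutation $\pi$ that maps to $(P,Q)$. Alternatively, once injectivity is known, I can compare cardinalities using $\sum_\lambda 2^{n-\ell(\lambda)}(g^\lambda)^2=n!$. Here $g^\lambda$ is the number of standard shifted tableaux of shape $\lambda$, and $2^{n-\ell(\lambda)}$ counts the possible barrings of the off-diagonal cells. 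This identity is itself usually derived from this bijection, so I would rely on the direct inverse argument.
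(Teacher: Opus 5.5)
Your proposal is correct and follows essentially the same route the paper takes. The paper cites this theorem from Sagan and Worley without proof, but its proof of the extension, Theorem~\ref{WSMRS}, builds the inverse map by the same reverse bumping: the starting rule is read off from the bar on $Q(c_k)$, and the rule is switched when a diagonal letter would be sent to the previous column. Restricted to unbarred letters, that construction is exactly your reverse column bumping up to the diagonal followed by reverse row bumping to the first row, with the check of $\phi\circ\psi$ and $\psi\circ\phi$ resting on the same bumping-path facts you list.
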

Next, we recall Haiman's mixed insertion. 
This inserts an unbarred letter into a shifted tableau in which some off-diagonal letters may be barred. Unbarred letters are given the row nature and barred letters the column nature. In this sense the rule is the \uline{ordinary} mixed buming as defined in Definition~\ref{OMDM}. However, a diagonal letter, which is unbarred, may be bumped. In this case one adds a bar to that letter, so that it assumes the column nature from that point, without changing the rule. We call this process the \textit{color change}. The associated Robinson--Schensted-type correspondence starts with an empty tableau and only unbarred letters are inserted, but due to this color change process the resulting insertion tableau can contain some barred off-diagonal letters.

\begin{dfn}[Haiman's mixed insertion \cite{h}]\label{HM}

Let $P$ be a shifted tableau in which each element of $\p$ appears at most once in $|P|$ and in which every diagonal entry is unbarred, and let $a\in\p$ be an unbarred letter such that $a$ does not appear in $|P|$.
Following the procedure below, Haiman's mixed insertion produces a new tableau from $P$ and $a$ by applying \textit{ordinary mixed bumping} as described in Definition~\ref{OMDM} together with color changes of letters.

The procedure starts just as if $a$ came from a row above the first row: we search the first \uline{row} of $P$ from left to right for an entry larger than $a$, and letting $\alpha'$ be the first encountered such entry and $c'$ the cell containing it, we form a tableau $P'$ from $P$ by replacing its entry of the cell $c'$ by $a$; namely by letting $a$ bump out its old entry $\alpha'$. If $\alpha'=\infty$, the procedure terminates here with $P'$ the result of the insertion.

Otherwise we proceed, and if $\alpha'$ has the \uline{row nature}, that is, if $\alpha'$ is \uline{unbarred}, we search the next \uline{row} of $P'$ below $c'$ from left to right for an entry larger than $\alpha'$; while if $\alpha'$ has the \uline{column nature}, that is, if $\alpha'$ is \uline{barred}, we search the next \uline{column} of $P'$ to the right of $c'$ from top to bottom for such an entry. Letting $\alpha''$ be the first encountered such entry in this search and $c''$ the cell containing it, we form a tableau $P''$ from $P'$ by replacing its entry of the cell $c''$ by $\alpha'$. If $\alpha''=\infty$, the procedure terminates here with $P''$ the result of the insertion.

Otherwise we proceed to produce $P'''$, $P''''$, and so on, until either the procedure terminates as above, or the special situation develops where bumping occurs at a diagonal cell $e$, say a letter $\beta$ bumping out the entry $\gamma$ of $e$. Note that this cannot happen if $\beta$ has the \uline{column} nature for the reason explained in Definition~\ref{WS}, so $\beta$ must have the \uline{row nature}, that is, $\beta$ is unbarred, so that the new entry of the diagonal cell $e$ is \uline{unbarred}. Note that $\gamma$ is also \uline{unbarred} due to the assumption on $P$, and under the rule it has the \uline{row nature}, but sending it to the next row will cause a problem for the reason also explained in Definition~\ref{WS}. When this happens, a \textit{color change} of the letter $\gamma$ is called for. This means that it is changed to a barred letter $\ol{\gamma}$ having the \uline{column nature} and, without changing the rule, is sent to the next \uline{column} to the right of $e$, which is searched from top to bottom as before.

Now we proceed to modify the tableau following the ordinary mixed bumping. After modification of the tableau continues for a certain number of steps, the diagonal bumping may again occur; in such a situation we call for the color change once more at that point and proceed. The color change may occur in this manner as many times as necessary before the procedure terminates. Since every letter newly entering a diagonal cell is unbarred, the resulting tableau again has all of its diagonal cells filled with unbarred letters, and hence the insertion can be iterated.

We denote the result of this insertion process by $P\hm a$.

For a permutation $\pi=a_1\dots a_n\in S_n$, we construct two sequences of shifted tableaux: $P_0$, $P_1$, $\dots$, $P_n$ which leads to the insertion tableau $P_\text{HM}(\pi)$, and $Q_0$, $Q_1$, $\dots$, $Q_n$ which leads to the recording tableau $Q_\text{HM}(\pi)$. 
Let $P_0=\emptyset$ and $Q_0=\emptyset$. For each integer $k$ with $1\leq k\leq n$, a tableaux pair $(P_k, Q_k)$ is constructed inductively based on $(P_{k-1}, Q_{k-1})$ through the following process: 
Let $P_k=P_{k-1}\hm a_k$, and let $c_k$ be the cell added to $P_{k-1}$ by inserting $a_k$. That is, $c_k=D'(P_k)\setminus D'(P_{k-1})$. 
Let $Q_k$ be the tableau obtained by adding to $Q_{k-1}$ the cell $c_k$ containing the letter $k$.
We call $P_n$ the \textit{insertion tableau} of $\pi$ by Haiman's mixed insertion and write it as $P_\text{HM}(\pi)$, and $Q_n$ is called the \textit{recording tableau} of $\pi$ by Haiman's mixed insertion and written as $Q_\text{HM}(\pi)$. 
\end{dfn}
\begin{ex}\label{hmeg}
For $\sigma=41532\in S_5$, Haiman's mixed insertion proceeds as follows: 
\begin{align*}
&&(P_0, Q_0)&=(\emptyset, \emptyset) \\
\emptyset\hm4&=\begin{ytableau}4\end{ytableau}& (P_1, Q_1)&=\left(\,
\begin{ytableau}4\end{ytableau}\,, 
\begin{ytableau}1\end{ytableau}\,
\right)\\
\begin{ytableau}\blue{4}\end{ytableau}\hm1&=\begin{ytableau}1&\ol{4}\end{ytableau}& (P_2, Q_2)&=\left(\,
\begin{ytableau}1&\ol{4}\end{ytableau}\,, 
\begin{ytableau}1&2\end{ytableau}\,
\right)\\
\begin{ytableau}1&\ol{4}\end{ytableau}\hm5&=\begin{ytableau}1&\ol{4}&5\end{ytableau}& (P_3, Q_3)&=\left(\,
\begin{ytableau}1&\ol{4}&5\end{ytableau}\,, 
\begin{ytableau}1&2&3\end{ytableau}\,
\right)\\
\begin{ytableau}1&\blue{\ol{4}}&\red{5}\end{ytableau}\hm3&=\begin{ytableau}1&3&\ol{4} \\ \none&5\end{ytableau}& (P_4, Q_4)&=\left(\,
\begin{ytableau}1&3&\ol{4} \\ \none&5\end{ytableau}\,, 
\begin{ytableau}1&2&3 \\ \none&4\end{ytableau}\,
\right)\\
\begin{ytableau}1&\red{3}&\ol{4} \\ \none&\blue{5}\end{ytableau}\hm2&=\begin{ytableau}1&2&\ol{4} \\ \none&3&\ol{5}\end{ytableau}& (P_5, Q_5)&=\left(\,
\begin{ytableau}1&2&\ol{4} \\ \none&3&\ol{5}\end{ytableau}\,, 
\begin{ytableau}1&2&3 \\ \none&4&5\end{ytableau}\,
\right)\\
\end{align*}
Thus 
\begin{align}
(P_\text{HM}(\sigma), Q_\text{HM}(\sigma))=\left(\,
\begin{ytableau}1&2&\ol{4} \\ \none&3&\ol{5}\end{ytableau}\,, 
\begin{ytableau}1&2&3 \\ \none&4&5\end{ytableau}\,
\right). \label{hmeq}
\end{align}
Here, the red or blue letters are used in the same sense as in Example~\ref{dmeg}. They are bumped in the order from the smallest to the largest among them. The blue letters on the diagonal get barred after being bumped. 
\end{ex}

Haiman's mixed insertion also gives a Robinson--Schensted-type correspondence. We should note that the order of the set of tableaux is swapped in this correspondence compared to the correspondence in Theorem~\ref{WSRS}.
\begin{thm}[\cite{h}]\label{HMRS}
Let $n$ be a positive integer. The map which sends $\pi\in S_n$ to a pair of same-shape shifted tableaux with $n$ cells
$$
\begin{array}{ccc} S_n & \stackrel{}{\longrightarrow} & \displaystyle \coprod_{\lambda\in\mathcal{P}_d(n)}\text{SSHTab}'(\lambda)\times\text{SSHTab}(\lambda) \\
\rotatebox{90}{$\in$} & & \rotatebox{90}{$\in$} \\
\pi & \longmapsto & \left(P_\text{HM}(\pi),\ Q_\text{HM}(\pi)\right)
\end{array}
$$
is bijective. We use the notation $\pi\xlr{\!\!\!\!\!\text{HM}\!\!\!\!\!}(P, Q)$ to indicate that $\pi$ corresponds to a pair of tableaux $(P, Q)$ in this correspondence. 
\end{thm}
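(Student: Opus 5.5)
The plan is to prove Theorem~\ref{HMRS} by constructing an explicit inverse: I will show that Haiman's mixed insertion can be reversed step by step, starting from the recording tableau. The proof has three parts.

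\textbf{The map lands in the stated set.} By the discussion in Definition~\ref{HM}, every letter that enters a diagonal cell is unbarred. Bars are created only by color change, and the barred letter is then sent to a column strictly to the right of a diagonal cell, so barred letters occupy only off-diagonal cells. I will verify by the usual case analysis that each bumping step preserves the shifted tableau conditions. This covers four situations: a row-nature letter entering the next row, a column-nature letter entering the next column, ties between $a$ and $\ol{a}$ (which cannot occur, since the values $|\cdot|$ are distinct), and the color change case. Hence $P_k$ is a shifted tableau whose letters $|\cdot|$ are exactly $\{a_1,\dots,a_k\}$ with off-diagonal bars only. After standardizing, this gives an element of $\text{SSHTab}'(\lambda)$. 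Since exactly one cell is added at each step, $Q_n$ is in $\text{SSHTab}(\lambda)$.

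\textbf{One insertion step is invertible.} Given $P_k$ and the corner cell $c_k$, I define a reverse bumping starting from the letter in $c_k$.

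If that letter is unbarred (row nature), it came from the row above. It reverse-bumps the largest entry smaller than it in that row.

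If it is barred (column nature), it came from the column to the left. It reverse-bumps the lowest entry smaller than it in that column.

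When the reverse path reaches a diagonal cell from the right, the letter being moved must be barred. In that case I remove its bar (undoing the color change) and continue in row mode upward, so that the diagonal letter is ejected into the row above. This matches the forward rule exactly. The process stops when a letter is ejected from the first row; that letter is $a_k$.

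The key claim is that forward and reverse bumping are mutually inverse. This reduces to local statements about one row or one column, exactly as in Schensted's proof. The one extra check is that the forward path and the reverse path pass through diagonal cells at the same moments.

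\textbf{Bijectivity.} The map is injective because $(P_n,Q_n)$ determines $c_n$ (the cell of $n$ in $Q_n$). From $c_n$ we recover $P_{n-1}$ and $a_n$, and then we induct.

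For surjectivity, I start from any pair $(P,Q)$ in the target and apply the reverse procedure. I must show that each reverse step yields a valid tableau of the same kind and that it removes the cell of $Q$ containing the largest entry. This surjectivity step is the main obstacle. The difficulty is confirming that reverse bumping from an \emph{arbitrary} corner of an arbitrary $P\in\text{SSHTab}'(\lambda)$ always terminates by ejecting a letter from row 1, without getting stuck. In particular, a barred letter reaching the diagonal must find a valid row entry above it.

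If this local argument becomes too intricate, I will fall back on a counting argument. Injectivity already shows the image has $n!$ elements. It then suffices to show
\[\sum_{\lambda\in\mathcal{P}_d(n)}|\text{SSHTab}'(\lambda)|\cdot|\text{SSHTab}(\lambda)|=n!.\]
Since $|\text{SSHTab}'(\lambda)|=2^{n-\ell(\lambda)}g_\lambda$, where $g_\lambda$ is the number of standard shifted tableaux of shape $\lambda$, this is the identity
\[\sum_{\lambda} 2^{n-\ell(\lambda)}g_\lambda^2=n!.\]
That identity follows from the already-known bijection of Theorem~\ref{WSRS}, which has the same target cardinality. With it, injectivity gives bijectivity at once.
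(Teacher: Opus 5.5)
Your plan is correct, and it is essentially the argument the paper itself uses, though the paper does not use it for this statement. Theorem~\ref{HMRS} is only quoted from Haiman. However, the paper's proof of Theorem~\ref{GMRS} builds exactly your inverse: reverse bumping according to row/column nature, and when a barred letter's column search ends at a diagonal cell, remove its bar, put it there, and send the diagonal letter up one row, stopping if the cell is $(1,1)$. Theorem~\ref{HMRS} is the restriction of Theorem~\ref{GMRS} to $S_n$, since for unbarred letters the generalized mixed insertion is Haiman's, and $Q_\text{GM}(\pi)$ is unbarred exactly when $\pi\in S_n$.

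The only real difference is your fallback for surjectivity. The paper just asserts that both $\phi\circ\psi$ and $\psi\circ\phi$ are the identity. You need only $\psi\circ\phi=\mathrm{id}$ on actual outputs, and you get surjectivity from the cardinality $n!$ of the target, read off from Theorem~\ref{WSRS} (same sets with the factors swapped). This is legitimate and not circular, since the Worley--Sagan bijection is proved independently.

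The obstacle you worry about is mild in any case. When the reverse path reaches a diagonal cell $(j,j)$ with $j\ge 2$, the cell $(j-1,j)$ directly above contains an entry smaller than the diagonal letter being ejected. So the search of row $j-1$ always succeeds, ending in an off-diagonal cell of column $\ge j$.

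One wording fix: the reverse step terminates when an \emph{unbarred} letter is pushed out of the first row. A barred letter bumped out of row~$1$ still moves on to the column to its left.
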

The Worley--Sagan insertion and Haiman's mixed insertion enjoy the following duality relation. 
In \S8, we state and prove that this duality still holds between our extended versions of these two insertions. 
\begin{thm}[\cite{h}]\label{WSHMduality}
Let $\pi\in S_n$. If $\pi\xlr{\!\!\!\!\!\text{WS}\!\!\!\!\!}(P, Q)$, then $\pi^{-1}\xlr{\!\!\!\!\!\text{HM}\!\!\!\!\!}(Q, P)$ holds. 
\end{thm}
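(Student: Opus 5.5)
The plan is to follow Haiman's doubling strategy, which the introduction announces for the extended version, and to deduce Theorem~\ref{WSHMduality} from the self-duality of the doubly mixed insertion (Theorem~\ref{DMd}).

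First I would fix the doubling of a shifted tableau: for a shifted tableau $P$ of shape $\lambda$, attach to $P$ its reflection in the diagonal, placed so that the union is an unshifted diagram with $2n$ cells (shape $\lambda$ glued to its transpose, with the diagonal shifted). The letters in the reflected copy are taken from an order-reversed alphabet, for instance $a\mapsto$ a suitably colored $\ol{N+1-a}$ or its negative. I would check that the result is an unshifted tableau in the sense of Definition~\ref{unshifted}; barred off-diagonal entries of a Haiman tableau become column-strict on one side and row-weak on the other.

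Second I would double the permutation. To $\pi=a_1\dots a_n$ I associate a doubly colored biword of length $2n$ in $DC_{2n}$. Its top row consists of the barred letters $\ol{k}$, which trigger dual mixed insertion, followed by the unbarred letters $k$, which trigger ordinary mixed insertion. Its bottom row contains each $a_k$ together with a reversed copy. The choice is made so that the doubly mixed insertion of this biword, restricted to one half, mimics left-right insertion.

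The central lemma is then:
\begin{itemize}
\item $P_\text{M*}$ of the doubled word equals the doubling of $P_\text{WS}(\pi)$;
\item $Q_\text{M*}$ of the doubled word equals the doubling of $Q_\text{WS}(\pi)$, with the bars of $Q_\text{WS}$ recorded by column-insertion steps.
\end{itemize}
The analogous statement for Haiman's mixed insertion applied to $\pi^{-1}$ is proved with the roles of the two doubling operators swapped. Both are proved by induction on $k$, comparing one WS (respectively HM) insertion step with the pair of doubly mixed insertion steps for $a_k$ and its mirror copy.

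The crucial case is a bump at a diagonal cell. In WS this triggers the rule change; in HM it triggers the color change. In the doubled tableau the diagonal cell sits at the junction of the two halves, so the letter bumped from it passes into the mirrored half. There it reappears with the opposite nature, a row-bumped letter becoming a column-bumped one. I would verify that this matches the rule change or color change precisely.

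Finally I check that inversion commutes with doubling: the inverse of the doubled biword of $\pi$ is the doubled biword of $\pi^{-1}$, with the two word-doubling operators exchanged. Given this, Theorem~\ref{DMd} yields $P_\text{M*}(\text{doubled }\pi^{-1})=Q_\text{M*}(\text{doubled }\pi)$, and likewise with $P$ and $Q$ exchanged. Undoubling, which is injective, then gives $P_\text{HM}(\pi^{-1})=Q_\text{WS}(\pi)$ and $Q_\text{HM}(\pi^{-1})=P_\text{WS}(\pi)$.

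I expect the main obstacle to be the inductive step of the embedding lemma at a diagonal bump. This means tracking the interaction between the two halves across the diagonal when the two halves are inserted in interleaved order. I would first try to prove a local commutation lemma: inserting a letter and then its mirror copy into a doubled tableau yields the doubling of the shifted insertion result, with the mirrored bumping path the reflection of the original one after the diagonal crossing.
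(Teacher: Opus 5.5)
Your overall route matches the paper's. The paper obtains Theorem~\ref{WSHMduality} (originally Haiman's) as the restriction to $S_n$ of Theorem~\ref{WSMGMd}: on unbarred words, WSM is the Worley--Sagan insertion and GM is Haiman's mixed insertion. That theorem is proved exactly by embedding both insertions into the doubly mixed insertion and applying Theorem~\ref{DMd}. However, the doubling you specify fails at the first step.

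\textbf{The mirrored copy cannot use an order-reversed alphabet.} Under $\omega$ (transpose, shifted one column left), row $i$ of the lower half is column $i$ of $P$ read downward. Column $j$ of the lower half is row $j+1$ of $P$. So the union is an unshifted tableau only if the relabelling of the mirrored copy is order-preserving on distinct values. Relabelling by $a\mapsto\ol{N+1-a}$ or by $-a$ makes the lower half decreasing. The working choice keeps values and changes at most colors:
\begin{itemize}
\item $\bbackslash P=P\cup\omega(P)$ for $P_{\text{WS}}$ and $Q_{\text{HM}}$;
\item $\bp P=P\cup\ol{\omega}(P)$ for $Q_{\text{WS}}$ and $P_{\text{HM}}$.
\end{itemize}
The doubled words likewise repeat the value $a_k$. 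In $d_1$ the columns are $\binom{\ol k}{a_k},\binom{k}{a_k}$; in $d_2$ they are $\binom{k}{\ol{a_k}},\binom{k}{a_k}$. The ``reversed copy'' you want does not come from reversing values. It comes from the dual mixed insertion of the first $a_k$, which is column insertion and so effectively prepends it. Thus $d_1(\pi)$ computes the Schensted tableau of $a_n\cdots a_1a_1\cdots a_n$. Since inversion swaps the two rows of the biword, the identity $d_1(\pi)^{-1}=d_2(\pi^{-1})$ also depends on both copies carrying the value $a_k$ itself.

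\textbf{Your local lemma must be stated more generally.} ``Insert a letter, then its mirror copy, starting from the first row'' will not close the induction over diagonal bumps. Take $X=R_1$ with $\alpha$ unbarred.
\begin{itemize}
\item The first insertion runs along $\omega(R_1)$ under the dual rule. It bumps $\alpha_n$ out of $\omega(c_n)$, the cell just left of the diagonal cell $c_n$. That letter then bumps the $\alpha_n$ in $c_n$ and continues in the upper half along the column $X_n$.
\item The second insertion runs under the ordinary rule. It bumps $\alpha_n$ out of $c_n$ into the row $\omega(X_n)$ of the lower half.
\end{itemize}
The remaining process is therefore $\bbackslash(P|_{D_2})\xleftarrow[X_n]{}\alpha_n\xleftarrow[\omega(X_n)]{}\alpha_n$, with $D_2$ as in the proof of Proposition~\ref{6m}. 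This has the opposite order and a column as its starting line. That is why Proposition~\ref{6m} is formulated for an arbitrary row or column $X$ and in both orders \eqref{6.1} and \eqref{6.2}, and proved by induction on the number of diagonal bumps. Your lemma needs the same generality.
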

\begin{ex}
$\pi$ that appears in Example~\ref{wseg} and $\sigma$ that appears in Example~\ref{hmeg} are inverses of each other. 
Comparing \eqref{wseq} and \eqref{hmeq}, it can be seen that the order of tableaux is swapped, as Theorem~\ref{WSHMduality} asserts. 
\end{ex}

\section{The Worley--Sagan Mixed Insertion and the Generalized Mixed Insertion}
  
In this section, we will define the main algorithms of this paper, the Worley--Sagan mixed insertion and the generalized mixed insertion, and we will prove that each of them gives a Robinson--Schensted-type correspondence.

With the extension of the insertion algorithms, the set of words appearing in the Robinson--Schensted-type correspondences must also be extended; these extended correspondences will be maps defined on the set of colored permutations. We begin with the definition of them. 

\begin{dfn}
A \textit{colored permutation} is a $\left(\p\cup\ol{\p}\right)$-word obtained from the $\p$-word representing a permutation by adding bars to some of its letters. Adding no bars is also allowed. 
Let $B_n$ be the set of the colored permutations of length $n$. 
This notation is derived from the fact that these words can represent the elements of the Weyl group of type $B$, or the hyperoctahedral group.
\end{dfn}
For example, $\ol{5}3\ol{4}12, 24351, \ol{4}\ \ol{1}\ \ol{2}\ \ol{5}\ \ol{3}$ are elements of $B_5$. 
The cardinality of $B_n$ is $2^n\cdot n!$. 

Next, we define the Worley--Sagan mixed insertion. Starting from a shifted tableau $P$ and a letter $\alpha$, we produce another tableau by adding $\alpha$ to $P$. 
In this process, we use two different rules for bumping as in the doubly mixed insertion for unshifted tableaux, the \textit{ordinary mixed bumping} and the \textit{dual mixed bumping}. We also use the terms \textit{row nature} and \textit{column nature} for letters as defined in Definition~\ref{OMDM}. 
After choosing the initial rule suitably as explained in the definition below, we iterate these bumping procedures until the insertion terminates by filling an empty cell, but we switch between the two rules every time a diagonal bumping tries to bump out a letter of row nature.

\begin{dfn}[Worley--Sagan mixed insertion]\label{WSM}
Let $P$ be a shifted tableau with $|P|$ having no repeated entries, and $\alpha\in\p\cup\ol{\p}$ a letter with $|\alpha|\notin|P|$.
The Worley--Sagan mixed insertion is an operation that produces a new tableau from $P$ and $\alpha$, by applying both the \textit{ordinary mixed bumping} and the \textit{dual mixed bumping} together with the \textit{rule change} between the two, following the procedure below.

The procedure starts by setting the rule to be the \uline{ordinary} or \uline{dual} mixed bumping, according to whether $\alpha$ is unbarred or barred, so that $\alpha$ has the \uline{row nature} in the chosen rule. Then, just as if $\alpha$ came from a row above the first row, we search the first \uline{row} of $P$ from left to right for an entry larger than $\alpha$, and letting $\alpha'$ be the first encountered such entry and $c'$ the cell containing it, we form a tableau $P'$ from $P$ by replacing the entry of the cell $c'$ by $\alpha$; namely by letting $\alpha$ bump out its old entry $\alpha'$. If $\alpha'=\infty$, the procedure terminates here with $P'$ the result of the insertion.

Otherwise we proceed, keeping the current rule, ordinary or dual, and if $\alpha'$ has the \uline{row nature} in the current rule, we search the next \uline{row} of $P'$ below $c'$ from left to right; while if $\alpha'$ has the \uline{column nature} in the current rule, we search the next \uline{column} of $P'$ to the right of $c'$ from top to bottom. Letting $\alpha''$ be the first encountered such entry in this search and $c''$ the cell containing it, we form a tableau $P''$ from $P'$ by replacing the entry of the cell $c''$ by $\alpha'$. If $\alpha''=\infty$, the procedure terminates here with $P''$ the result of the insertion.

Otherwise we still keep the current rule and proceed to produce $P'''$, $P''''$, and so on, until either the procedure terminates as above, or the bumping occurs at a diagonal cell and the letter being bumped out has the \uline{row nature}. Placing it anywhere in the $\mathcal{O}$-part of the next row would create a problem as explained in Definition~\ref{WS}. When this occurs, we call for the \textit{rule change} and switch to the other rule: if the current rule is \uline{ordinary} we switch to the \uline{dual}, while if the current rule is \uline{dual} we switch to the \uline{ordinary}, so that the letter about to be bumped will have the \uline{column nature} in the new rule.

Now we proceed to modify the tableau under the new rule. After modification of the tableau under the new rule continues for a certain number of steps, a diagonal letter of row nature may be bumped again; in such a situation we switch the rule once more at that point and proceed. The rule change between the two rules may occur in this manner as many times as necessary before the procedure terminates.

We denote the result of this insertion process by $P\wsm \alpha$.

Given a colored permutation $\pi=\alpha_1\dots \alpha_n$, as in Definition~\ref{WS}, we construct two sequences of shifted tableaux, $P_0, P_1, \dots, P_n$ and $Q_0, Q_1, \dots,Q_n$ which lead to $P_\text{WSM}(\pi)$ and $Q_\text{WSM}(\pi)$, respectively. 
Let $P_0=\emptyset$ and $Q_0=\emptyset$. For each integer $k$ with $1\leq k\leq n$, a tableaux pair $(P_k, Q_k)$ is constructed inductively based on $(P_{k-1}, Q_{k-1})$ through the following process: 
Let $P_k=P_{k-1}\wsm \alpha_k$, and let $c_k$ be the cell added to $P_{k-1}$ by inserting $\alpha_k$. That is, $c_k=D'(P_k)\setminus D'(P_{k-1})$. 
Define $\tilde{k}\in\p\cup\ol{\p}$ to be
\begin{align*}
\tilde{k}=\begin{cases} k & \text{if the insertion of $\alpha_k$ into $P_{k-1}$ ends up with an ordinary mixed bumping, }\\
\ol{k}&
\text{if the insertion of $\alpha_k$ into $P_{k-1}$ ends up with a dual mixed bumping. } \\
\end{cases}
\end{align*}
Let $Q_k$ be the tableau obtained from $Q_{k-1}$ by adding the cell $c_k$ with entry 
\begin{align*}
Q_k(c_k)=\begin{cases} \tilde{k} & \text{if $c_k$ is off-diagonal, }\\
k&
\text{if $c_k$ is diagonal. } \\
\end{cases}
\end{align*}
We call $P_n$ the \textit{insertion tableau} of $\pi$ by the Worley--Sagan mixed insertion and write it as $P_\text{WSM}(\pi)$. Similarly, $Q_n$ is called the \textit{recording tableau} of $\pi$ by the Worley--Sagan mixed insertion and written as $Q_\text{WSM}(\pi)$. 
\end{dfn}
\begin{ex}\label{wsmeg}
For $\pi=\ol{7}5\ol{2}43\ol{6}\,\ol{1}\in B_n$, the Worley--Sagan mixed insertion proceeds as follows: 
\begin{align*}
&&(P_0, Q_0)&=(\emptyset, \emptyset) \\
\emptyset\wsm\,\ol{7}&=\begin{ytableau}\ol{7}\end{ytableau}& (P_1, Q_1)&=\left(\,
\begin{ytableau}\ol{7}\end{ytableau}\,, 
\begin{ytableau}1\end{ytableau}\,
\right)\\
\begin{ytableau}\blue{\ol{7}}\end{ytableau}\wsm\,5&=\begin{ytableau}5&\ol{7}\end{ytableau}& (P_2, Q_2)&=\left(\,
\begin{ytableau}5&\ol{7}\end{ytableau}\,, 
\begin{ytableau}1&2\end{ytableau}\,
\right)\\
\begin{ytableau}\blue{5}&\red{\ol{7}}\end{ytableau}\wsm\,\ol{2}&=\begin{ytableau}\ol{2}&5\\\none&\ol{7}\end{ytableau}& (P_3, Q_3)&=\left(\,
\begin{ytableau}\ol{2}&5\\\none&\ol{7}\end{ytableau}\,, 
\begin{ytableau}1&2\\\none&3\end{ytableau}\,
\right)\\
\begin{ytableau}\ol{2}&\red{5}\\\none&\blue{\ol{7}}\end{ytableau}\wsm\,4&=\begin{ytableau}\ol{2}&4&\ol{7}\\\none&5\end{ytableau}& (P_4, Q_4)&=\left(\,
\begin{ytableau}\ol{2}&4&\ol{7}\\\none&5\end{ytableau}\,, 
\begin{ytableau}1&2&4\\\none&3\end{ytableau}\,
\right)\\
\begin{ytableau}\ol{2}&\red{4}&\red{\ol{7}}\\\none&\blue{5}\end{ytableau}\wsm\,3&=\begin{ytableau}\ol{2}&3&5\\\none&4&\ol{7}\end{ytableau}& (P_5, Q_5)&=\left(\,
\begin{ytableau}\ol{2}&3&5\\\none&4&\ol{7}\end{ytableau}\,, 
\begin{ytableau}1&2&4\\\none&3&\ol{5}\end{ytableau}\,
\right)\\
\begin{ytableau}\ol{2}&3&5\\\none&4&\ol{7}\end{ytableau}\wsm\,\ol{6}&=\begin{ytableau}\ol{2}&3&5&\ol{6}\\\none&4&\ol{7}\end{ytableau}& (P_6, Q_6)&=\left(\,
\begin{ytableau}\ol{2}&3&5&\ol{6}\\\none&4&\ol{7}\end{ytableau}\,, 
\begin{ytableau}1&2&4&\ol{6}\\\none&3&\ol{5}\end{ytableau}\,
\right)\\
\begin{ytableau}\blue{\ol{2}}&\red{3}&\blue{5}&\red{\ol{6}}\\\none&\blue{4}&\red{\ol{7}}\end{ytableau}\wsm\,\ol{1}&=\begin{ytableau}\ol{1}&\ol{2}&4&5\\\none&3&\ol{6}\\\none&\none&\ol{7}\end{ytableau}& (P_7, Q_7)&=\left(\,
\begin{ytableau}\ol{1}&\ol{2}&4&5\\\none&3&\ol{6}\\\none&\none&\ol{7}\end{ytableau}\,, 
\begin{ytableau}1&2&4&\ol{6}\\\none&3&\ol{5}\\\none&\none&7\end{ytableau}\,
\right)\\
\end{align*}

Thus 
\begin{align}
(P_\text{WSM}(\pi), Q_\text{WSM}(\pi))=\left(\,
\begin{ytableau}\ol{1}&\ol{2}&4&5\\\none&3&\ol{6}\\\none&\none&\ol{7}\end{ytableau}\,, 
\begin{ytableau}1&2&4&\ol{6}\\\none&3&\ol{5}\\\none&\none&7\end{ytableau}\,
\right). \label{wsmeq}
\end{align}
Here, the red or blue letters are used in the same sense as in Examples~\ref{dmeg} and \ref{wseg}, and they are bumped in the order from the smallest to the largest among them. The largest letter among the red or blue letters indicates whether each insertion terminates with an ordinary mixed bumping or a dual mixed bumping. 
If it is unbarred red or barred blue, the insertion terminates with an ordinary mixed bumping. 
On the other hand, if it is unbarred blue or barred red, the insertion terminates with a dual mixed bumping. This is the information used in the construction of $Q_\text{WSM}(\pi)$.
\end{ex}

The Worley--Sagan mixed insertion gives a Robinson--Schensted-type correspondence between the set of colored permutations and the set of certain pairs of shifted tableaux. 
For a positive integer $n$ and $\lambda\in\mathcal{P}_d(n)$, the set of shifted tableaux obtained from standard shifted $\p$-tableaux of shape $\lambda$ by adding bars to some of its letters is denoted by $\text{SSHTab}''(\lambda)$. Adding no bars is allowed, so $\text{SSHTab}(\lambda)\subset\text{SSHTab}'(\lambda)\subset\text{SSHTab}''(\lambda)$ holds. 

\begin{thm}\label{WSMRS}
Let $n$ be a positive integer. The map which sends $\pi\in B_n$ to a pair of same-shape shifted tableaux with $n$ cells 
$$
\begin{array}{ccc} B_n & \stackrel{}{\longrightarrow} & \displaystyle \coprod_{\lambda\in\mathcal{P}_d(n)}\text{SSHTab}''(\lambda)\times\text{SSHTab}'(\lambda) \\
\rotatebox{90}{$\in$} & & \rotatebox{90}{$\in$} \\
\pi & \longmapsto & \left(P_\text{WSM}(\pi),\  Q_\text{WSM}(\pi)\right)
\end{array}
$$
is bijective. We use the notation $\pi\xlr{\!\!\!\!\!\text{WSM}\!\!\!\!\!}(P, Q)$ to indicate that $\pi$ corresponds to a pair of tableaux $(P, Q)$ in this correspondence. 
\end{thm}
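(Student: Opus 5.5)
The proof follows the standard pattern for Robinson--Schensted-type bijections. We show that the map is well defined, that one insertion step can be reversed, and that the two resulting maps are mutually inverse.

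\textbf{Well-definedness.} First we check that a single step $P\wsm\alpha$ yields a shifted tableau whenever $P$ is a shifted tableau with $|P|$ free of repeats and $|\alpha|\notin|P|$. Away from the diagonal, each bump is an ordinary or dual mixed bump as in Definition~\ref{OMDM}. So the local verification that rows and columns stay weakly increasing, with at most one unbarred $a$ per column and at most one barred $a$ per row, is the same as for mixed insertion. At a diagonal bump the rule change gives the bumped letter the column nature, and the argument in Definition~\ref{WS} shows that it goes to a non-diagonal cell of the next column. Hence the shape remains a shifted diagram, and the new cell $c_k$ is an outer corner.

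Consequently $P_{\text{WSM}}(\pi)$ has $|P_{\text{WSM}}(\pi)|$ standard, with bars allowed anywhere, so it lies in $\text{SSHTab}''(\lambda)$. The tableau $Q_{\text{WSM}}(\pi)$ is standard because its cells are added in order $1,\dots,n$. Its diagonal entries are unbarred by the definition of $Q_k(c_k)$, so $Q_{\text{WSM}}(\pi)\in\text{SSHTab}'(\lambda)$.

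\textbf{Reverse step.} The central step is a reverse insertion. We are given a shifted tableau $P_k$, a removable corner $c$, and a flag saying whether the last bump was ordinary or dual. From these we recover the unique $(P_{k-1},\alpha_k)$. If $c$ is off-diagonal, the flag is read off from the bar on $Q_k(c)$. If $c$ is diagonal, the last bump must have been a row bump into the diagonal cell; a column bump never lands on the diagonal. Since the entry there is a letter of row nature that was bumped in from the row above, it must be unbarred in an ordinary step or barred in a dual step. So the rule in force at the end is determined by the color of $P_k(c)$, and no flag is needed. This explains why $Q$ records no bars on the diagonal.

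We then run the usual reverse bumping. A letter of row nature in the current rule moves up a row, to the largest entry smaller than it. A letter of column nature moves left a column, to the lowest entry not exceeding it in the appropriate weak or strict sense. Each reverse step is forced by the tableau conditions, exactly as in the reverse of mixed insertion. Whenever the reverse path moves left out of a diagonal cell $e$ carrying a letter of column nature that came from the column to its right, we undo a rule change: we switch to the other rule, and the letter now has row nature and continues upward. The letter ejected from row $1$ is $\alpha_k$.

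\textbf{Main obstacle.} The hardest step is to show that forward and reverse steps invert each other, specifically at the diagonal. We must show that, going backwards, the move out of a diagonal cell leftward in column nature can only arise from a forward rule change. Equivalently, a forward column-nature bump into the diagonal cell never occurs, which follows from the argument in Definition~\ref{WS}. We must also show that the reverse path is well defined, so that the inverse search always finds a cell.

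I would handle this with a case analysis on the four combinations of current rule and color at the diagonal cell. Once this is done, iterating the reverse step for $k=n,\dots,1$, with corners given by $Q$, reconstructs $\pi$. This gives injectivity. Conversely, starting from any pair in $\text{SSHTab}''(\lambda)\times\text{SSHTab}'(\lambda)$, the reverse steps produce a colored permutation $\pi\in B_n$ whose forward image is that pair. This gives surjectivity.
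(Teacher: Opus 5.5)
Your proposal is correct and follows essentially the same route as the paper: construct the inverse by reverse mixed bumping, take the starting rule from the bar on $Q(c_k)$, switch rules whenever a letter leaving a diagonal cell would have column nature, and stop when a row-nature letter leaves the first row. The only cosmetic difference is at a diagonal corner. There you read the starting rule off the color of $P_k(c_k)$, whereas the paper starts with the ordinary rule (since $Q(c_k)=k$ there) and lets its diagonal switching clause flip to the dual rule when that entry is barred, which gives the same result.
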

\begin{proof}
We denote the map described above by $\phi$. 
We prove the bijectivity of $\phi$ by constructing an inverse map $\psi$ of $\phi$. 
Let us explain the definition of $\psi$.

We construct $\psi(P, Q)$ for arbitrary $(P, Q)\in\text{SSHTab}''(\lambda)\times\text{SSHTab}'(\lambda)$, where $\lambda$ is an arbitrary distinct partition of $n$. Let $c_k$ be the cell in $\mathcal{O}$ such that $|Q(c_k)|=k$ ($1\leq k\leq n$).

Set $P_n = P$. Apply the following procedure by varying $k=n, n-1, \dots, 1$ in this order to obtain $P_{n-1}, P_{n-2}, \dots, P_0 (=\emptyset)$ along with letters $\alpha_n, \alpha_{n-1}, \dots, \alpha_1$, and set $\psi(P, Q)=\alpha_1\alpha_2\dots\alpha_n$.

To obtain $P_{k-1}$ and $\alpha_k$ from $P_k$, we perform the following procedure in which we may switch between the two rules which we call the ordinary (ordinary reverse mixed bumping) and the dual (dual reverse mixed bumping). Similarly to the two rules used in the definition of the map $\phi$, in the ordinary reverse mixed bumping the unbarred [resp. barred] letters will have the row [resp. column] nature, whereas in the dual reverse mixed bumping the unbarred [resp. barred] letters will have the column [resp. row] nature.

For simplicity, set $\dot{P}=P_k, \dot{c}=c_k$, and $\dot{\alpha}=P_k(c_k)=\dot{P}(\dot{c})$. Let $\dot{P}'$ be the tableau obtained from $\dot{P}$ by removing its cell $\dot{c}$.

We choose the ordinary [resp. dual] as the first rule if $Q(c_k) = k$ [resp. $Q(c_k) = \ol{k}$]. There are certain exceptional situations to be explained soon, but we look at the typical situtation first. If $\dot{\alpha}$ has the row nature, search the previous row of $\dot{P}'$ above $\dot{c}$ from right to left for an entry smaller than $\dot{\alpha}$; if $\dot{\alpha}$ has the column nature, search the previous column immediately to the left of $\dot{c}$ from bottom to top for an entry smaller than $\dot{\alpha}$. Letting $\dot{\alpha}'$ be the first encountered such entry in this search and $\dot{c}'$ the cell containing it, modify the tableau $\dot{P}'$ into $\dot{P}''$ by replacing the entry of the cell $\dot{c}'$ by $\dot{\alpha}$, regarding the cell's old entry $\dot{\alpha}'$ as ``bumped out'' by $\dot{\alpha}$. Repeat the same procedure with $\dot{P}''$ and $\dot{\alpha}'$ instead of $\dot{P}'$ and $\dot{\alpha}$; namely follow the current rule and search for the cell $\dot{c}''$ of $\dot{P}''$ in the previous row or column, depending on the nature of the letter $\dot{\alpha}'$ in the current rule, to find where the letter $\dot{\alpha}'$ should go and bump out the cell's old entry $\dot{\alpha}''$ to form another tableau $\dot{P}'''$, and so on.

One of the exceptional situations involves the termination of the procedure: if a letter is bumped out of a cell in the first row and the bumped letter has the row nature, then the procedure terminates here, saving the current tableau under the name $P_{k-1}$ and define $\alpha_k$ to be the letter bumped out from the first row.

The other is concerned with rule switching: If a letter is bumped out of a diagonal cell and has the column nature, then sending it anywhere in the $\mathcal{O}$-part of the previous column would violate the tableau condition concerning the comparison of the transported letter with the right-hand neighbor of its new position. Furthermore, if the diagonal cell is $(1, 1)$, this is a situation we want to avoid since reverse bumping from the first column is not an inverse of any operation defined as part of the procedure $\phi$. Whenever this happens, we switch to the other rule so that the bumped letter has the row nature, search the previous row for the place where it should go, and proceed.

We repeat this until the procedure terminates in the sense explained above.

It is straightforward to check that $\phi\circ\psi$ and $\psi\circ\phi$ are the identity maps.
\end{proof}

\begin{rmk}
\begin{enumerate}[label=\arabic*., labelsep=0.5em]
  \item The name Worley--Sagan mixed insertion is used since it inherits its idea of rule change from the Worley--Sagan insertion, and its idea of changing the direction of progression depending on colors of letters from Haiman's mixed insertion.
  \item Sagan extended his Worley--Sagan insertion and gave what he calls the shifted Knuth correspondence {\cite[Theorem~8.1]{sa}}. By a suitable restriction, it also gives rise to a bijection
\[B_n\xrightarrow{\sim}\coprod_{\lambda\in\mathcal{P}_d(n)}\text{SSHTab}''(\lambda)\times\text{SSHTab}'(\lambda)\]
(up to some notational coordination). This map applied to the element $\pi\in B_n$ in Example~\ref{wsmeg}, say, gives the pair
\[\left(\,
\begin{ytableau}\ol{1}&\ol{2}&3&5&\ol{6}\\\none&4&\ol{7}\end{ytableau}\,, 
\begin{ytableau}1&\ol{2}&\ol{3}&\ol{5}&\ol{7}\\\none&4&6\end{ytableau}\,\right).\]
Our pair $\left(P_\text{WSM}(\pi),\  Q_\text{WSM}(\pi)\right)$ as shown in $\eqref{wsmeq}$ is quite different from this, even in the shape of the tableaux.
\end{enumerate}
\end{rmk}

Next, we define an extension of Haiman's mixed insertion, which we call the generalized mixed insertion. Starting with a shifted tableau $P$ and a letter $\alpha$, we obtain a new tableau by inserting $\alpha$ into $P$. 

Similarly to the Worley--Sagan mixed insertion, the generalized mixed insertion consists of a succession of ordinary mixed bumpings or dual mixed bumpings, which is defined in Definition~\ref{WSM}. 
Unlike the Worley--Sagan mixed insertion, this insertion sets the rule to either ordinary or dual mixed bumping so that $\alpha$ is given the row nature, and sticks to the same rule all the way until an empty cell is filled. However, when bumping occurs at a diagonal cell, certain issues may arise. In such cases, the \textit{color change} process is invoked, as explained in Definition~\ref{GM}, while still using the same rule. This process ensures that whenever a diagonal letter is bumped, it always moves into the column immediately to the right of its original position, and that the diagonal entries are unbarred.

\begin{dfn}[generalized mixed insertion]\label{GM}
Let $P$ be a shifted tableau where all diagonal entries are unbarred and $|P|$ has no repeated entries, and $\alpha\in\p\cup\ol{\p}$ a letter with $|\alpha|\notin|P|$.

The generalized mixed insertion is an operation that produces a new tableau from $P$ and $\alpha$, by applying either the \textit{ordinary mixed bumping} or the \textit{dual mixed bumping} together with the \textit{color change} of letters, following the explanation below. As in Definition~\ref{OMDM}, the ordinary mixed bumping gives unbarred letters the \uline{row nature} and barred letters the \uline{column nature}, whereas the dual mixed bumping gives unbarred letters the \uline{column nature} and barred letters the \uline{row nature}.

The procedure starts by setting the rule to be the \uline{ordinary} or \uline{dual} mixed bumping, according to whether $\alpha$ is unbarred or barred, so that $\alpha$ has the \uline{row nature} in the chosen rule. Then, just as if $\alpha$ came from a row above the first row, we search the first \uline{row} of $P$ from left to right for an entry larger than $\alpha$, and letting $\alpha'$ be the first encountered such entry and $c'$ the cell containing it, we form a tableau $P'$ from $P$ by replacing its entry of the cell $c'$ by $\alpha$; namely by letting $\alpha$ bump out its old entry $\alpha'$. If $\alpha'=\infty$, the procedure terminates here with $P'$ the result of the insertion.

Otherwise we proceed, keeping the rule, and if $\alpha'$ has the \uline{row nature} in the chosen rule, we search the next \uline{row} of $P'$ below $c'$ from left to right; while if $\alpha'$ has the \uline{column nature} in the chosen rule, we search the next \uline{column} of $P'$ to the right of $c'$ from top to bottom. Letting $\alpha''$ be the first encountered such entry in this search and $c''$ the cell containing it, we form a tableau $P''$ from $P'$ by replacing its entry of the cell $c''$ by $\alpha'$. If $\alpha''=\infty$, the procedure terminates here with $P''$ the result of the insertion.

Otherwise we still keep the rule and proceed to produce $P'''$, $P''''$, and so on, until either the procedure terminates as above, or a bumping occurs at a diagonal cell $e$, say a letter $\beta$ bumping out the entry $\gamma$ of $e$.
Note that a search along a column never reaches a diagonal cell for a reason similar to that explained in Definition~\ref{WS}, so that $\beta$ must be a letter having the \uline{row nature}; note also that every diagonal entry of the current tableau is unbarred, so that $\gamma$ is \uline{unbarred}. If the chosen rule is \uline{ordinary}, this means that $\beta$ is \uline{unbarred} and so placing it in a diagonal cell is fine, and we let it bump $\gamma$. But $\gamma$ has the \uline{row nature} in this case, so sending it to anywhere in the $\mathcal{O}$-part of the next row would create an object violating the tableau condition in the sense explained in Definition~\ref{WS}. 
In this case we change the color of the bumped letter $\gamma$ (unbarred) into $\ol{\gamma}$ (barred) so that it has the \uline{column nature} in the chosen rule (which is \uline{ordinary}), and conduct a search for its place in the next column to the right. On the other hand, if the chosen rule is \uline{dual}, then $\beta$ must be barred to have the \uline{row nature}, so placing it in a diagonal cell would violate the unbarredness of diagonal entries. In this case we change the color of the letter $\beta$ (barred) to $\ol{\beta}$ (unbarred), and let it bump the entry $\gamma$ of $e$, which has the \uline{column nature} in the chosen rule (which is \uline{dual}), so we proceed to search for its place in the next column to the right. This latter color change is also applied when $\gamma=\infty$, in which case the procedure terminates by filling $e$ with $\ol{\beta}$ (unbarred).

Now we proceed to modify the tableau, still under the same rule. Every time the bumping at a diagonal cell occurs, we change either the color of the letter being bumped out of the diagonal cell or the color of the letter being brought into the diagonal cell, depending on whether the chosen rule is ordinary or dual, and proceed until the procedure terminates. All through the procedure the tableau condition and the unbarredness of the diagonal entries are preserved, and hence anothor generalized mixed insertion can be applied.

We denote the result of this insertion process by $P\gm \alpha$.

Given a colored permutation $\pi=\alpha_1\dots \alpha_n$, as in Definition~\ref{WS}, we construct two sequences of shifted tableaux, $P_0, P_1, \dots, P_n$ and $Q_0, Q_1, \dots,Q_n$, which lead to $P_\text{GM}(\pi)$ and $Q_\text{GM}(\pi)$, respectively. 
Let $P_0=\emptyset$ and $Q_0=\emptyset$. For each integer $k$ with $1\leq k\leq n$, a tableaux pair $(P_k, Q_k)$ is constructed inductively based on $(P_{k-1}, Q_{k-1})$ through the following process: 
Let $P_k=P_{k-1}\gm \alpha_k$, and let $c_k$ be the cell added to $P_{k-1}$ by inserting $\alpha_k$. That is, $c_k=D'(P_k)\setminus D'(P_{k-1})$. 
Let $Q_k$ be the tableau obtained by adding to $Q_{k-1}$ the cell $c_k$ whose entry is determined by the rule: 
\begin{align*}
Q_k(c_k)=\begin{cases} k & \text{if $\alpha_k$ is unbarred, }\\
\ol{k}&
\text{if $\alpha_k$ is barred. } \\
\end{cases}
\end{align*}
We call $P_n$ the \textit{insertion tableau} of $\pi$ by the generalized mixed insertion and write it as $P_\text{GM}(\pi)$. Similarly, $Q_n$ is called the \textit{recording tableau} of $\pi$ by the generalized mixed insertion and written as $Q_\text{GM}(\pi)$.

\end{dfn}
\begin{ex}\label{gmeg}
For $\sigma=\ol{7}\,\ol{3}542\ol{6}\,\ol{1}\in B_n$, the generalized mixed insertion proceeds as follows: 
\begin{align*}
&&(P_0, Q_0)&=(\emptyset, \emptyset) \\
\emptyset\gm\ol{7}&=\begin{ytableau}7\end{ytableau}& (P_1, Q_1)&=\left(\,
\begin{ytableau}7\end{ytableau}\,, 
\begin{ytableau}\ol{1}\end{ytableau}\,
\right)\\
\begin{ytableau}\blue{7}\end{ytableau}\gm\ol{3}&=\begin{ytableau}3&7\end{ytableau}& (P_2, Q_2)&=\left(\,
\begin{ytableau}3&7\end{ytableau}\,, 
\begin{ytableau}\ol{1}&\ol{2}\end{ytableau}\,
\right)\\
\begin{ytableau}3&\red{7}\end{ytableau}\gm5&=\begin{ytableau}3&5\\\none&7\end{ytableau}& (P_3, Q_3)&=\left(\,
\begin{ytableau}3&5\\\none&7\end{ytableau}\,, 
\begin{ytableau}\ol{1}&\ol{2}\\\none&3\end{ytableau}\,
\right)\\
\begin{ytableau}3&\red{5}\\\none&\blue{7}\end{ytableau}\gm4&=\begin{ytableau}3&4&\ol{7}\\\none&5\end{ytableau}& (P_4, Q_4)&=\left(\,
\begin{ytableau}3&4&\ol{7}\\\none&5\end{ytableau}\,, 
\begin{ytableau}\ol{1}&\ol{2}&4\\\none&3\end{ytableau}\,
\right)\\
\begin{ytableau}\blue{3}&\red{4}&\blue{\ol{7}}\\\none&\blue{5}\end{ytableau}\gm2&=\begin{ytableau}2&\ol{3}&\ol{5}&\ol{7}\\\none&4\end{ytableau}& (P_5, Q_5)&=\left(\,
\begin{ytableau}2&\ol{3}&\ol{5}&\ol{7}\\\none&4\end{ytableau}\,, 
\begin{ytableau}\ol{1}&\ol{2}&4&5\\\none&3\end{ytableau}\,
\right)\\
\begin{ytableau}2&\ol{3}&\ol{5}&\red{\ol{7}}\\\none&4\end{ytableau}\gm\ol{6}&=\begin{ytableau}2&\ol{3}&\ol{5}&\ol{6}\\\none&4&\ol{7}\end{ytableau}& (P_6, Q_6)&=\left(\,
\begin{ytableau}2&\ol{3}&\ol{5}&\ol{6}\\\none&4&\ol{7}\end{ytableau}\,, 
\begin{ytableau}\ol{1}&\ol{2}&4&5\\\none&3&\ol{6}\end{ytableau}\,
\right)\\
\begin{ytableau}\blue{2}&\red{\ol{3}}&\red{\ol{5}}&\ol{6}\\\none&\blue{4}&\red{\ol{7}}\end{ytableau}\gm\ol{1}&=\begin{ytableau}1&2&4&\ol{6}\\\none&3&\ol{5}\\\none&\none&7\end{ytableau}& (P_7, Q_7)&=\left(\,
\begin{ytableau}1&2&4&\ol{6}\\\none&3&\ol{5}\\\none&\none&7\end{ytableau}\,, 
\begin{ytableau}\ol{1}&\ol{2}&4&5\\\none&3&\ol{6}\\\none&\none&\ol{7}\end{ytableau}\,
\right)\\
\end{align*}

Thus 
\begin{align}
(P_\text{GM}(\sigma), Q_\text{GM}(\sigma))=\left(\,
\begin{ytableau}1&2&4&\ol{6}\\\none&3&\ol{5}\\\none&\none&7\end{ytableau}\,, 
\begin{ytableau}\ol{1}&\ol{2}&4&5\\\none&3&\ol{6}\\\none&\none&\ol{7}\end{ytableau}\,
\right). \label{gmeq}
\end{align}
Here, the red or blue letters are used in the same sense as in Examples~\ref{dmeg} and \ref{hmeg}, and they are bumped in the order from the smallest to the largest among them.
Note that in the case of ordinary mixed bumping, bars are attached to unbarred letters bumped out of the diagonal cells, and in the case of dual mixed bumping, bars are removed from barred letters entering the diagonal cells.
\end{ex}

The generalized mixed insertion also gives a Robinson--Schensted-type correspondence. We should note that the order of the set of tableaux is swapped in this correspondence compared to the correspondence in Theorem~\ref{WSMRS}.
\begin{thm}\label{GMRS}
Let $n$ be a positive integer. The map which sends $\pi\in B_n$ to a pair of same-shape shifted tableaux with $n$ cells
$$
\begin{array}{ccc} B_n & \stackrel{}{\longrightarrow} & \displaystyle \coprod_{\lambda\in\mathcal{P}_d(n)}\text{SSHTab}'(\lambda)\times\text{SSHTab}''(\lambda) \\
\rotatebox{90}{$\in$} & & \rotatebox{90}{$\in$} \\
\pi & \longmapsto & \left(P_\text{GM}(\pi),\  Q_\text{GM}(\pi)\right)
\end{array}
$$
is bijective. We use the notation $\pi\xlr{\!\!\!\!\!\text{GM}\!\!\!\!\!}(P, Q)$ to indicate that $\pi$ corresponds to a pair of tableaux $(P, Q)$ in this correspondence. 
\end{thm}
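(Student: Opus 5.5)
Following the proof of Theorem~\ref{WSMRS}, I will construct an explicit inverse $\psi$ of the map $\phi\colon\pi\mapsto(P_\text{GM}(\pi),Q_\text{GM}(\pi))$. The first task is to check that $\phi$ lands in the stated target. The diagonal entries of $P_\text{GM}(\pi)$ are unbarred (this invariant is built into Definition~\ref{GM}). Together with $|\pi|$ being a permutation, this puts $P_\text{GM}(\pi)$ in $\text{SSHTab}'(\lambda)$, since tableau conditions are preserved at each step. Next, $Q_\text{GM}(\pi)$ records $k$ or $\ol{k}$ in the new cell according to the color of $\alpha_k$, with no restriction on diagonal cells, so it lies in $\text{SSHTab}''(\lambda)$; it is standard because a new outer corner is added at each step.

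For $\psi$, take $(P,Q)\in\text{SSHTab}'(\lambda)\times\text{SSHTab}''(\lambda)$, let $c_k$ be the cell with $|Q(c_k)|=k$, and peel off $c_n,c_{n-1},\dots,c_1$ in turn. To pass from $P_k$ to $P_{k-1}$ and $\alpha_k$, fix the rule once and for all: ordinary if $Q(c_k)=k$, dual if $Q(c_k)=\ol{k}$. This matches the forward procedure, where the rule is determined by the color of $\alpha_k$ and never changes. Remove $c_k$ and reverse-bump its entry. A letter of row nature goes to the previous row, searched right to left for the last entry smaller than it; a letter of column nature goes to the previous column, searched bottom to top. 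The process stops when a row-nature letter is bumped out of the first row, and that letter is $\alpha_k$.

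The only new ingredient is the reverse of the color change, which I will split by rule. Under the ordinary rule, the forward step bars an unbarred letter $\gamma$ bumped from a diagonal cell $e$, and $\ol{\gamma}$ moves into the column to the right of $e$. In reverse, a barred (column-nature) letter is reverse-bumped into a column and would land on the diagonal cell $e$. Diagonal entries must be unbarred, and the tableau condition forces exactly this situation. So the reverse step unbars it, places $\gamma$ in $e$, and lets the unbarred former diagonal entry continue upward into the previous row, with row nature. Under the dual rule, the forward step unbars a barred $\beta$ on entering $e$, and the old entry $\gamma$ (unbarred, column nature) moves right. In reverse, the reverse-bump places $\gamma$ into $e$, and the ejected diagonal letter $\beta'$ is unbarred, which has column nature under the dual rule. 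The reverse step re-bars it to $\ol{\beta'}$, giving it row nature, and sends it to the previous row. This includes the case where $c_k$ itself is diagonal and the removed letter gets barred right away.

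In both cases the reverse color change happens exactly when a column-nature letter would have to enter a diagonal cell or leave one. The situations are mutually exclusive, so each reverse step is well defined, and $(1,1)$ never causes trouble because letters exit the first row with row nature.

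The main obstacle is verifying that $\phi\circ\psi$ and $\psi\circ\phi$ are identities. The key claims are that each forward bump path is recovered step by step by the reverse search, and that every reverse-bump produces a valid shifted tableau with unbarred diagonal. I would prove both by induction along the bumping path, using the standard mixed-insertion comparisons as in the proof of Theorem~\ref{WSMRS}. The one delicate point is that a column search never reaches a diagonal cell except in the color-change configurations above, and I would check this separately in the ordinary and dual rules.
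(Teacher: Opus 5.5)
Your proposal is correct and follows essentially the same route as the paper: an explicit inverse $\psi$ built from ordinary or dual reverse mixed bumping, with the rule fixed by whether $Q(c_k)$ is $k$ or $\ol{k}$. The forward color changes are undone in the same way, by unbarring a column-nature letter that lands on a diagonal cell (ordinary rule) or by re-barring the unbarred letter ejected from a diagonal cell (dual rule). You also spell out the case of a diagonal $c_k$ under the dual rule, where the removed entry must be barred at once and sent to the previous row; the paper's written description of $\psi$ leaves this case implicit.
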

\begin{proof}

The outline of the proof is similar to that of Theorem~\ref{WSMRS}. 
We denote the map described above by $\phi$, and prove the bijectivity of $\phi$ by constructing an inverse map $\psi$ of $\phi$. 
Let us explain the definition of $\psi$. 

We construct $\psi(P, Q)$ for arbitrary $(P, Q)\in\text{SSHTab}'(\lambda)\times\text{SSHTab}''(\lambda)$, where $\lambda$ is an arbitrary distinct partition of $n$. Let $c_k$ be the cell in $\mathcal{O}$ such that $|Q(c_k)|=k$ ($1\leq k\leq n$). 

Set $P_n=P$. Apply the following procedure by varying $k=n, n-1, \dots, 1$ in this order to obtain $P_{n-1}, P_{n-2}, \dots, P_0(=\emptyset)$ along with letters $\alpha_n, \alpha_{n-1}, \dots, \alpha_1$, and set $\psi(P, Q)=\alpha_1\alpha_2\dots\alpha_n$. 

To obtain $P_{k-1}$ and $\alpha_k$ from $P_k$, we perform the following procedure in which we follow one of the two rules which we call the ordinary reverse mixed bumping and the dual reverse mixed bumping, as in the proof of Theorem~\ref{WSMRS}. Unlike in the proof of Theorem~\ref{WSMRS}, however, we never switch between the two rules in the course of the procedure obtaining $P_{k-1}$ and $\alpha_k$ from $P_k$; instead we change the colors of letters in certain exceptional situations, just as in the definition of the generalized mixed insertion. 

For simplicity, set $\dot{P}=P_k, \dot{c}=c_k$, and $\dot{\alpha}=P_k(c_k)=\dot{P}(\dot{c})$. Let $\dot{P}'$ be the tableau obtained from $\dot{P}$ by removing its cell $\dot{c}$. 

We choose the ordinary [resp. dual] reverse mixed bumping as the rule if $Q(c_k) = k$ [resp. $Q(c_k) = \ol{k}$], and keep it until the procedure terminates. If $\dot{\alpha}$ has the row nature, search the previous row of $\dot{P}'$ above $\dot{c}$ from right to left for an entry smaller than $\dot{\alpha}$; if $\dot{\alpha}$ has the column nature, search the previous column immediately to the left of $\dot{c}$ from bottom to top for an entry smaller than $\dot{\alpha}$. Letting $\dot{\alpha}'$ be the first encountered such entry in this search and $\dot{c}'$ the cell containing it, modify the tableau $\dot{P}'$ into $\dot{P}''$ by replacing the entry of the cell $\dot{c}'$ by $\dot{\alpha}$, regarding the cell's old entry $\dot{\alpha}'$ as ``bumped out'' by $\dot{\alpha}$. From $\dot{P}''$ and $\dot{\alpha}'$ we obtain $\dot{P}''', \dot{P}'''', \dots$ and $\dot{\alpha}'', \dot{\alpha}''', \dots$ in the same way as in the proof of Theorem~\ref{WSMRS}, searching at each step the previous row or the previous column according to the nature in the rule of the letter being relocated, except for the treatment of diagonal cells explained below. The procedure terminates when a letter having the row nature is bumped out of a cell in the first row; at this point we name the current tableau $P_{k-1}$ and define $\alpha_k$ to be the letter bumped out from the first row.

In this procedure, bumping at diagonal cells is handled by
color change as follows.
Suppose the search for the place for a bumped letter $\dot{\gamma}$ ends
at a diagonal cell $\dot{e}$; namely, its entry $\dot{\beta}$ is the first
encountered letter smaller than $\dot{\gamma}$ in this search.
Since $\dot{e}$ is a diagonal cell, $\dot{\beta}$ is unbarred.
Moreover, if this were the search of a row, it would end in a cell either
directly above the original place of $\dot{\gamma}$ or somewhere more to
the right, which could not be a diagonal cell.
Hence $\dot{\gamma}$ must have the column nature.
If the chosen rule is ordinary, $\dot{\gamma}$ must be barred, so we
change its color to an unbarred letter $\overline{\dot{\gamma}}$ and let
it bump the entry $\dot{\beta}$ of $\dot{e}$, which has the row nature.
So, without changing the color of $\dot{\beta}$, we proceed to search the
row immediately above $\dot{e}$ for its new place
(or if $\dot{e}=(1,1)$, the procedure terminates).
If the chosen rule is dual, $\dot{\gamma}$ must be unbarred in order to
have the column nature. So, without changing its color, we let it bump the
entry $\dot{\beta}$ of $\dot{e}$.
However, $\dot{\beta}$ is unbarred and hence has the column nature, and placing $\dot{\beta}$ anywhere in the
$\mathcal{O}$-part of the column immediately to the left of $\dot{e}$
would make it larger than the entry immediately to its right,
violating the tableau condition.
If $\dot{e}=(1,1)$, reverse bumping from the first column would not
be the inverse of any operation defined as part of the procedure $\phi$.
So we change the color of $\dot{\beta}$ (unbarred) to
$\overline{\dot{\beta}}$ (barred) to give it the row nature, and proceed to
search the row immediately above $\dot{e}$ for its new place
(or if $\dot{e}=(1,1)$, the procedure terminates).
In this manner the unbarredness of the diagonal letters and the tableau
conditions are preserved.
Hence this procedure can be applied repeatedly with $k$ varying from
$n$ to $1$.

It is straightforward to check that $\phi\circ\psi$ and $\psi\circ\phi$ are the identity maps.
\end{proof}

\begin{rmk}
This insertion inherits a feature of Haiman's mixed insertion rather than the Worley--Sagan insertion in that it invokes the color change process when diagonal bumping occurs; hence, it is called the generalized mixed insertion.
\end{rmk}

\section{Relationship between the Worley--Sagan Mixed Insertion and the Doubly Mixed Insertion}

In this section, we will prove that the Worley--Sagan mixed insertion can be embedded into the doubly mixed insertion by doubling tableaux and words. 
 
We define two types of doubling operators for shifted tableaux. The first one is for insertion tableaux, written $\bbackslash$, and the second one is for recording tableaux, written $\bp$. Roughly speaking, for a shifted tableaux $P$, $\bbackslash P$ is defined by attaching the transpose of $P$ to the down left corner of $P$. 
$\bp P$ is a tableau defined by modifying some colors of letters written in $\bbackslash P$. 

We redefine $\mathcal{Q}$ to be $\mathbb{P}\times(\mathbb{P}\cup\{0\})$ and adopt this definition henceforth. This is because, when we attach the transpose of a shifted tableau, we want to shift the transpose to the left by one column so that it has no overlap with the original.
We regard $\mathcal{O}=\{\,(i, j)\in\mathcal{Q} \mid 1\leq i\leq j\,\}$ as a subset of $\mathcal{Q}$. 
We say a cell is \textit{above diagonal} if it is an element of $\mathcal{O}$ and \textit{below diagonal} if it is an element of $\mathcal{Q}\setminus\mathcal{O}$. 
Figure~\ref{fig3} illustrates $\mathcal{Q}$ placed in the $xy$-plane with the new coordinates, and the above-diagonal cells are indicated by gray squares.

Define a map $\omega : \mathcal{Q}\to\mathcal{Q}$ by
\begin{align*}
\omega(c)=\begin{cases}(j, i-1)& \text{if $c\in\mathcal{O}$, }\\
(j+1, i)& \text{if $c\in\mathcal{Q}\setminus\mathcal{O}$. }
\end{cases}
\end{align*}
for $c=(i, j)\in\mathcal{Q}$. 
$\omega(c)$ is the left-neighbor cell of the transpose of $c$ across the diagonal line $\Delta=\{\,(i, i)\in\mathcal{Q} \mid i\in\mathbb{P}\,\}$. 
Note that this line does not begin from $(1, 0)$, the left top corner of $\mathcal{Q}$. Note that our $\omega$ is the restriction of the map $\sigma$ defined by Haiman \cite[Definition~5.2]{h} to $\mathcal{Q}$.
Figure~\ref{fig4} shows the location of $\omega(a)$ and $\omega(b)$ for two specific cells $a, b$. 
As can be easily seen in Figure~\ref{fig4}, $\mathcal{O}$ and $\mathcal{Q}\setminus\mathcal{O}$ maps to each other by $\omega$, and $\omega$ is an involution. 

\begin{dfn}
For a map $T : \mathcal{A}\to\p\cup\ol{\p}$ defined on a subset $\mathcal{A}$ of $\mathcal{Q}$, 
we denote maps 
\begin{align*}
\ol{(\cdot)}\circ T&: \mathcal{A}\to\p\cup\ol{\p}, \\
T\circ\omega&: \omega(\mathcal{A})\to\p\cup\ol{\p}, \\
\ol{(\cdot)}\circ T\circ\omega&: \omega(\mathcal{A})\to\p\cup\ol{\p}
\end{align*}
 by $\ol{T}, \omega(T)$ and $\ol{\omega}(T)$, respectively. 
Note that $\ol{\omega}(T)=\omega(\ol{T})=\ol{\omega(T)}$ holds. 
\end{dfn}

\begin{figure}[htbp]
\centering
\begin{minipage}{0.45\linewidth}
\begin{tikzpicture}[x=0.8cm, y=0.8cm, yscale=-1]\label{fig3}
\draw[step=0.8cm, help lines, color=gray!30, dashed] (-0.9,-0.9) grid (8.25,8.25);
\foreach \x in {1, ..., 7} \fill[black!30] (\x+0.5,\x-0.5) rectangle (7.75,\x+0.5);
\draw[->,ultra thick] (0,-1)--(0,8.5) node[below]{$x$};
\draw[->,ultra thick] (-1,0)--(8.5,0) node[right]{$y$};
\foreach \x in {1, ..., 8} \draw (0, \x) node[left]{$\x$};
\foreach \x in {0, ..., 7} \draw (\x+1,0) node[above]{$\x$};
\foreach \x in {1.5, ..., 7.5} \draw[line width=1.5pt] (0.5,\x)--(7.75,\x);
\foreach \x in {0.5, ..., 6.5} \draw[line width=1.5pt] (\x,7.75)--(\x, 0.5);
\draw[line width=1.5pt](0.5,0.5)--(7.75,0.5);
\draw[line width=1.5pt](7.5,0.5)--(7.5,7.75);
\draw (0, 0) node[above left] {O}; 
\foreach \x in {1, ..., 7} \draw (8.25, \x)node{$\dots$};
\foreach \x in {1, ..., 7} \draw (\x, 8)node{$\vdots$};
\draw(8,8)node{$\ddots$};
\end{tikzpicture}
\caption{\label{fig3}}
\end{minipage}
\hspace{30pt}
\begin{minipage}{0.45\linewidth}
\begin{tikzpicture}[x=0.8cm, y=0.8cm, yscale=-1]\label{fig4}
\draw[step=0.8cm, help lines, color=gray!30, dashed] (-0.9,-0.9) grid (8.25,8.25);
\foreach \x in {1, ..., 7} \fill[black!30] (\x+0.5,\x+0.5) rectangle (8,\x-0.5);
\foreach \x in {1, ..., 7} \fill[black!10] (\x-0.5,\x-0.5) rectangle (\x+0.5,8);
\fill[black!10] (7.5,7.5) rectangle (8,8);
\draw[->,ultra thick] (0,-1)--(0,8.5) node[below]{$x$};
\draw[->,ultra thick] (-1,0)--(8.5,0) node[right]{$y$};
\foreach \x in {1, ..., 8} \draw (0, \x) node[left]{$\x$};
\foreach \x in {0, ..., 7} \draw (\x+1,0) node[above]{$\x$};
\foreach \x in {1.5, ..., 6.5} \draw[line width=1.5pt] (\x,\x)--(\x+1,\x);
\foreach \x in {0.5, ..., 6.5} \draw[line width=1.5pt] (\x+1,\x+1)--(\x+1,\x);
\draw[line width=1.5pt](0.5,0.5)--(8,0.5);
\draw (0, 0) node[above left] {O}; 
\draw[line width=1.5pt](7.5,7.5)--(8,7.5);
\draw[line width=1.5pt](0.5,0.5)--(0.5,8);
\node (a) at (7,3) {};
\node (b) at (3,6) {};
\draw[->, bend angle=50, thick] (a) to[bend left] (b);
\draw[->, bend angle=50, thick] (b) to[bend left] (a);
\draw[thin] (5.5,1.5)--(5.5,2.5)(5.5,2.5)--(5.5,2.5)(5.5,2.5)--(6.5,2.5)(6.5,2.5)--(6.5,1.5)(6.5,1.5)--(5.5,1.5);
\draw[thin] (1.5,4.5)--(1.5,5.5)(1.5,5.5)--(1.5,5.5)(1.5,5.5)--(2.5,5.5)(2.5,5.5)--(2.5,4.5)(2.5,4.5)--(1.5,4.5);
\draw (6, 2) node[font=\small]{$a$};
\draw (2, 5) node[font=\small]{$\omega(a)$};
\draw[thin] (6.5,5.5)--(7.5,5.5)(7.5,5.5)--(7.5,6.5);
\draw[thin] (5.5,5.5)--(5.5,6.5)(5.5,6.5)--(6.5,6.5);
\draw (7, 6) node[font=\small]{$b$};
\draw (6, 6) node[font=\small]{$\omega(b)$};
\draw (4.5, 3) node[font=\Large]{$\omega$};
\end{tikzpicture}
\caption{\label{fig4}}
\end{minipage}
\caption*{}
\end{figure}

\begin{dfn}
\label{squareop}
(1) Let $P$ be a shifted tableau such that each positive integer appears at most once in $|P|$. Then $P\cup\omega(P)$ is an unshifted tableau, which we denote by $\bbackslash P$. 

(2) Let $P$ be a shifted tableau such that all of its diagonal letters are unbarred, and each positive integer appears at most once in $|P|$. In this case, $P\cup\ol{\omega}(P)$ is also an unshifted tableau, which we denote by $\bp P$.  
\end{dfn}

\begin{rmk}
The idea behind Definition~\ref{squareop} stems from Haiman~\cite{h}, whose symbol $\bbackslash$ corresponds, in our notation, to $\bp$.
\end{rmk}
\medskip
\begin{ex}
For $P_1=\begin{ytableau}\ol{1}&2&6&\ol{7}\\\none&4&8\\\none&\none&\ol{9}\end{ytableau}$ and
$P_2=\begin{ytableau}2&3&\ol{6}\\\none&5&\ol{7}\\\none&\none&8\end{ytableau}$\,, 
\begin{align*}
\bbackslash P_1=\begin{ytableau}\ol{1}&\ol{1}&2&6&\ol{7}\\2&4&4&8\\6&8&\ol{9}&\ol{9}\\\ol{7}\end{ytableau},\, 
\bp P_2=\begin{ytableau}\ol{2}&2&3&\ol{6}\\\ol{3}&\ol{5}&5&\ol{7}\\6&7&\ol{8}&8\end{ytableau}\,. 
\end{align*}
\end{ex}
\begin{dfn}\label{wsmx}
(1) Let $T$ be an unshifted tableau, $\beta\in\p\cup\ol{\p}$ be a letter, and $X$ be a row or column in $\mathcal{Q}$. We define $T\x\beta$ as follows:

Choose the rule, either the ordinary or dual mixed insertion, that gives $\beta$ the row nature if $X$ is a row and the column nature if $X$ is a column.
Then as in the ordinary or dual mixed insertion, search $X$ from left to right
for an entry greater than $\beta$ if $X$ is a row, and from top to
bottom for an entry greater than or equal to $\beta$ if $X$ is a column.
Letting $\beta'$ be the first encountered such entry and $c$ the cell containing it, modify $T$
into $T'$ by changing the entry of $c$ into $\beta$. Unlike in the case where $X$ is the first row or column, or where $\beta$ comes from the previous row or column in the process of an ordinary or dual mixed insertion, $T'$ may not be an unshifted tableau, but if it is indeed an unshifted tableau, we say that $T\x\beta$ is well defined, and it will denote the result of continuing the search-and-bump
procedure with the chosen rule unchanged until the termination condition of the ordinary or dual mixed insertion is attained. The conformity with the tableau condition will persist all the way to the termination of the process.

(2) Let $P$ be a shifted tableau with $|P|$ having no repeated entries, $\alpha\in\p\cup\ol{\p}$ a letter with $|\alpha|\notin|P|$, and $X$ a row or column of $\mathcal{O}$. We define $P\wsmx\alpha$ as follows:

Choose the initial rule, either the ordinary or dual mixed bumping, that gives $\alpha$ the row nature if $X$ is a row and the column nature if $X$ is a column. Then as in the Worley--Sagan mixed insertion, search $X$ (left to right if $X$ is a row and top to bottom if $X$ is a column) for an entry larger than $\alpha$, and letting $\alpha'$ be the first encountered such entry and $c$ the cell containing it, modify $P$ into $P'$ by changing the entry of $c$ into $\alpha$. Unlike in the Worley--Sagan mixed insertion, $P'$ may not be a shifted tableau. If $P'$ is a shifted tableau, we continue bumping as in the Worley--Sagan mixed insertion, including the rule change whenever appropriate, until the procedure terminates as an empty cell is filled, and we define $P\wsmx\alpha$ to be the tableau resulting from this procedure; in this case we say that $P\wsmx\alpha$ is well defined. So long as $P'$ obtained in the first step satisfies the tableau condition, the conformity with the tableau condition persists until the procedure terminates.
\end{dfn}
\begin{rmk}\label{aboveleft}
In Definition~\ref{wsmx}, to determine whether $T'$ or $P'$ is a tableau, it suffices to compare the new entry in $c$ with its neighbor above it if $X$ is a row, and with its neighbor to the left of it if $X$ is a column.
\end{rmk}
\begin{ex}
Definition~\ref{wsmx} gives a notation for the result of an ordinary or dual mixed insertion using the result of some of its bumpings.
For instance, in the calculation
\[\begin{ytableau}1 & \ol{2} \\ 2 & 3\end{ytableau}\dm \ol{1},\]
which is the process in Example~\ref{dmeg} to obtain $P_5$ from $P_4$, the first bumping occurs at cell $(1, 1)$ where the newly inserted letter $\ol{1}$ bumps the old entry $1$. The letter has the column nature in the dual rule, and so is sent to the second column, hence we have 
\[\begin{ytableau}1 & \ol{2} \\ 2 & 3\end{ytableau}\dm \ol{1}=\begin{ytableau}1 & \ol{2} \\ 2 & 3\end{ytableau}\xleftarrow[R_1]{}\ol{1}=\begin{ytableau}\ol{1} & \ol{2} \\ 2 & 3\end{ytableau}\xleftarrow[C_2]{}1,\]
where the symbols $R_1$ and $C_2$ denote the first row and the second column, respectively. Equalities may continue: 
\[=\begin{ytableau}\ol{1} & 1 \\ 2 & 3\end{ytableau}\xleftarrow[R_2]{} \ol{2}=\begin{ytableau}\ol{1} & 1 \\ \ol{2} & 3\end{ytableau}\xleftarrow[C_2]{}2=\begin{ytableau}\ol{1} & 1 \\ \ol{2} & 2\end{ytableau}\xleftarrow[C_3]{}3 = \begin{ytableau}\ol{1} & 1 & 3 \\ \ol{2} & 2\end{ytableau}\,.\]
\end{ex}
In the following, any row or column of $\mathcal{O}$ is also considered as a row or column of $\mathcal{Q}$.

\begin{prop}\label{6m}
Let $P$ be a shifted tableau with $|P|$ having no repeated entries, and $\alpha\in\p\cup\ol{\p}$ a letter with $|\alpha|\notin|P|$. Suppose that $X$ is a row or column, that $P\wsmx\alpha$ is well defined, and that, if $X$ is a column, $\alpha$ does not enter a diagonal cell in $P\wsmx\alpha$. Then
\begin{align}
\bbackslash\left(P\wsmx\alpha\right)&=\bbackslash P\ox\alpha\x\alpha \label{6.1}\\
&=\bbackslash P\x\alpha\ox\alpha \label{6.2}
\end{align}
holds.
Here, $\bbackslash P\ox\alpha=(\bbackslash P)\ox\alpha$, $\bbackslash P\ox\alpha\x\alpha=\left((\bbackslash P)\ox\alpha\right)\x\alpha$, and so on.
\end{prop}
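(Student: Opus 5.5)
We argue by induction on the number of bumps in the computation of $P\wsmx\alpha$. Equivalently, we induct on the length of the bumping path, which is finite since every step moves strictly down or right in $\mathcal{O}$. The guiding principle is the symmetry $\omega(\bbackslash P|_{\mathcal{Q}\setminus\mathcal{O}})=P$. The transpose $\omega$ interchanges rows and columns: a row $\mathcal{R}_i\cap\mathcal{O}$ goes to part of the column $\mathcal{C}_{i-1}$, and a column $\mathcal{C}_j\cap\mathcal{O}$ goes to part of the row $\mathcal{R}_{j+1}$. It also interchanges the roles of the ordinary and dual rules, since in $\bbackslash P$ a letter keeps its color but must travel in the transposed direction.

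So the rule that gives $\alpha$ the row nature when inserted along $X$ gives it the column nature along $\omega(X)$. Hence $\bbackslash P\x\alpha$ and $\bbackslash P\ox\alpha$ are run with opposite rules, and each reproduces, in its own half of $\bbackslash P$, the bumping path of $P\wsmx\alpha$ under the current rule. We first check, using Remark~\ref{aboveleft}, that both outer insertions in \eqref{6.1} and \eqref{6.2} are well defined. Each first bump replaces an entry by $\alpha$ at the cell $c$ or $\omega(c)$, and the neighbour to be compared is either the $\omega$-image of the neighbour checked in $P$, or, for cells adjacent to $\Delta$, a copy of an entry of $P$ itself. Since $|\alpha|\notin|P|$ and the letters of $|P|$ are distinct, the strict versus weak comparisons ($>$ along rows, $\geq$ along columns) agree on the two halves.

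For the inductive step, let the first bump of $P\wsmx\alpha$ occur at the cell $c\in X$, with $\alpha$ replacing $\alpha'$, and let $P'$ be the result.

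\emph{Off-diagonal case.} If $c\notin\Delta$, then in $\bbackslash P$ the search along $X$ stops at $c$ and the search along $\omega(X)$ stops at $\omega(c)$. The two searches read disjoint sets of cells, except possibly near $\Delta$, where the hypothesis that $P\wsmx\alpha$ is well defined controls things. Consequently both first bumps can be performed in either order, and they give $\bbackslash P'$ together with two copies of $\alpha'$ to be inserted. These are inserted along $X'$ and $\omega(X')$, where $X'$ is the next row or column dictated by the nature of $\alpha'$. The induction hypothesis applied to $P'\xleftarrow[X']{\text{WSM}}\alpha'$ then yields \eqref{6.1} and \eqref{6.2} simultaneously. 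The extra hypothesis excluding a diagonal entry when $X$ is a column is inherited by $X'$, because column searches never reach $\Delta$.

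\emph{Diagonal case.} If $c=(i,i)\in\Delta$ with $X=\mathcal{R}_i$, the Worley--Sagan mixed insertion performs a rule change, and $\alpha'$ is sent to $\mathcal{C}_{i+1}$ with the column nature in the new rule. In $\bbackslash P$ the two copies of $\alpha'$ sit side by side at $(i,i-1)=\omega(i,i)$ and $(i,i)$. One insertion, along $\mathcal{R}_i$, bumps the copy at $(i,i)$. Under its unchanged rule the bumped letter goes to the next row, into the region $\omega(\mathcal{C}_{i+1}\cap\mathcal{O})$. The other insertion, along $\mathcal{C}_{i-1}$, bumps the copy at $(i,i-1)$ into the next column, namely $\mathcal{C}_i$, which contains $(i,i)$ and the cells to its right. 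We must show that these two continuations are exactly the $\omega(X')$ and $X'$ insertions for $X'=\mathcal{C}_{i+1}$ under the switched rule. Once that is verified, the induction hypothesis again finishes the proof, and the termination case (bumping $\infty$) is immediate.

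I expect the diagonal case to be the main obstacle, together with the closely related bookkeeping at the cells adjacent to $\Delta$. There the $X$-search of $\bbackslash P$ can pass through cells $(i,0),\dots,(i,i-1)$ belonging to the $\omega$-half, and the two insertions genuinely interact, so it is not a priori clear that either order in \eqref{6.1} and \eqref{6.2} works. I would handle it by a small explicit analysis of the $2\times2$ block of cells $(i,i-1),(i,i),(i+1,i-1),(i+1,i)$. The aim is to show that whichever insertion is performed first leaves a tableau in which the second insertion is well defined and lands where predicted. This uses the fact that $\alpha'$ is the unique letter of its absolute value in $P$, so that it occupies exactly those two cells in $\bbackslash P$.
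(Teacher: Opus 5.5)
Your scheme (peel off the first bump, pass to $P'$ and $X'$, and carry \eqref{6.1} and \eqref{6.2} together in the induction) is a legitimate alternative to the paper's induction on the number of diagonal bumps. As written, however, it leaves out two things, and the second is where the actual content of the proposition lies.

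\textbf{The composites are sequential.} In $\bbackslash P\ox\alpha\x\alpha$ the $\omega(X)$-insertion runs to termination before the $X$-insertion makes its first bump. So ``both first bumps can be performed in either order'' is not the statement you need. After its first bump, $\bbackslash P\ox\alpha$ equals $T\xleftarrow[\omega(X')]{}\alpha'$, where $T$ is $\bbackslash P$ with $\alpha$ written into $\omega(c)$. To reach $\bbackslash P'\xleftarrow[\omega(X')]{}\alpha'\xleftarrow[X']{}\alpha'$ you must show that writing $\alpha$ into $c$ commutes with the entire insertion of $\alpha'$ along $\omega(X')$. Concretely, that insertion must neither change nor be steered by the entries of $X$ up to $c$, and afterwards the search along $X$ must still stop at $c$; the symmetric statement is needed for \eqref{6.2}. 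Off the diagonal this is true: the first search of the tail runs along $\omega(X')$, which misses $c$, and thereafter only letters strictly larger than $\alpha'$ are displaced, while $X$ up to $c$ holds letters $\le\alpha'$. But it has to be argued. For the same reason, your claim that each insertion reproduces the path ``in its own half'' is false beyond the first diagonal bump.

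\textbf{The diagonal step is only announced, and your description of it is inaccurate in ways that matter.}
\begin{enumerate}
\item[(i)] The rule changes at $(i,i)$ only if $\alpha'$ has the row nature in the current rule. Otherwise the rule is kept, though still $X'=\mathcal{C}_{i+1}$.
\item[(ii)] Run first on $\bbackslash P$ (the order of \eqref{6.2}), the $\mathcal{R}_i$-insertion stops at $(i,i-1)=\omega(c)$, not at $(i,i)$. This is because $(i,0),\dots,(i,i-2)$ carry $P(1,i),\dots,P(i-1,i)<\alpha$ and $(i,i-1)$ carries $\alpha'>\alpha$. It reaches $(i,i)$ only when run second.
\item[(iii)] The second insertion gets past $\omega(c)$, and a letter sent from $(i,i-1)$ into $\mathcal{C}_i$ gets past $(i,i)$, only because a column search stops at an entry equal to the moving letter, which then continues unchanged. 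So ties do occur in $\bbackslash P$, and the $\ge$ in column searches is used essentially.
\end{enumerate}

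Working out the four combinations gives the following. Without a rule change, the $\omega(X)$-insertion continues along $\omega(X')$ and the $X$-insertion along $X'$. With a rule change it is the other way round, whichever insertion is performed first. Hence in the rule-change case \eqref{6.1} for $P$ reduces to \eqref{6.2} for $P'$, and vice versa. This is the $Z/W$ bookkeeping in Parts II--III of the paper's proof, and it is why both identities must be proved together.

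With this diagonal analysis and the commutation argument above supplied, your single-step induction does go through. Without them, the proposal has not yet proved the proposition.
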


\begin{rmk}
The statement of Proposition~\ref{6m} includes the assertion that 
$\bbackslash P\ox\alpha$, $\bbackslash P\ox\alpha\x\alpha$, 
$\bbackslash P\x\alpha$, and $\bbackslash P\x\alpha\ox\alpha$ 
are well defined.
See Remark~\ref{aboveleft} for the well-definedness of these tableaux.
\end{rmk}

\begin{proof}
We prove this proposition by induction on the number $N$ of times diagonal letters are bumped during the insertion $\wsmx\alpha$ applied to $P$. Note that if an empty diagonal cell gets filled, we count this as a bumping of a diagonal letter. (If we interpret empty cells as being filled with $\infty$, then $\infty$ is considered the bumped letter in such a case. )
First, consider the base case $N=0$. We extract this as a lemma for repeated use in later arguments.

\begin{lem}\label{6l} Under the assumption of Proposition~\ref{6m}, let $\alpha_0=\alpha$, and suppose that in the insertion $\wsmx\alpha$ applied to $P$, $\alpha_k$ bumps $\alpha_{k+1}$ located in the cell $c_{k+1}$ ($0\leq k\leq n-1$). Assume that $\alpha_n=\infty$, i.e., the insertion terminates when the empty cell $c_n$ is filled with $\alpha_{n-1}$. 
Define $P_0=P$ and $X_0=X$. For each integer $k$ with $1\leq k\leq n$, let $P_k$ denote the tableau immediately after replacing $\alpha_k$ in $c_k$ with $\alpha_{k-1}$ during the insertion process. 
Let $X_k$ be the row or column into which $\alpha_k$ is inserted after being bumped from $c_k$. 
Suppose that all of $c_1, \dots, c_n$ are off-diagonal. 

With abbreviations $A_k, B_k$ of sentences as defined below, the following assertions \textup{(1)}--\textup{(4)} hold:
\begin{enumerate}[label=\textup{(\arabic*)}, labelsep=0.6em]
\item[(1)]\, 
In $\ox\alpha$ into $\bbackslash P$, $A_k$ holds. 
\item[(2)]\,  
In $\x\alpha$ into $\bbackslash P\ox\alpha$, $B_k$ holds. 
\item[(3)]\, 
In $\x\alpha$ into $\bbackslash P$, $B_k$ holds. 
\item[(4)]\, 
In $\ox\alpha$ into $\bbackslash P\x\alpha$, $A_k$ holds.
\end{enumerate}

Abbreviations: 
\begin{description}
 \item[$A_k$]: 
 $\begin{cases}
  \alpha_0 \text{ bumps } \alpha_1 \text{ in } \omega(c_1), & \quad (k=1), \\
  \alpha_{k-1} \text{ bumped from } \omega(c_{k-1}) \text{ enters } \omega(X_{k-1}) \text{ and bumps } \alpha_k \text{ in } \omega(c_k), & \quad (2\leq k\leq n).
 \end{cases}$
 \item[$B_k$]: 
 $\begin{cases}
  \alpha_0 \text{ bumps } \alpha_1 \text{ in } c_1, & \quad (k=1), \\
  \alpha_{k-1} \text{ bumped from } c_{k-1} \text{ enters } X_{k-1} \text{ and bumps } \alpha_k \text{ in } c_k, & \quad (2\leq k\leq n).
 \end{cases}$
\end{description}
\end{lem}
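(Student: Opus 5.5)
We argue by induction on $k$, simultaneously for all four assertions, comparing at each step the search the doubly mixed procedure performs in $\bbackslash P$ (or in the partially modified tableau) with the search the Worley--Sagan mixed insertion performs in $P$. Since no diagonal cell is touched, the rule in $\wsmx\alpha$ never changes. Hence the letters $\alpha_k$ have a fixed row/column nature, and the rule chosen for $\x\alpha$ in Definition~\ref{wsmx}(1) is exactly the rule in force in $P\wsmx\alpha$. For $\ox\alpha$ the rule is the other one, because $\omega$ interchanges rows and columns: if $X$ is a row then $\omega(X)$ is a column, and vice versa. So in (1) and (4) each $\alpha_{k}$ has the opposite nature, which is exactly what is needed for it to travel along $\omega(X_k)$.

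\textbf{Assertions (2) and (3).} The below-diagonal half $\omega(P)$ of $\bbackslash P$ lies entirely to the left of, or below, the relevant parts of the rows and columns of $\mathcal{O}$.

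For a row $X_{k-1}$ of $\mathcal{O}$, regarded as a row of $\mathcal{Q}$, the search from the left first passes over cells of $\omega(P)$ lying in that row. We must show that none of them contains an entry larger than $\alpha_{k-1}$, nor an equal one under the column convention. The entries of $\omega(P)$ in row $i$ are $P(j,i-1)$ for $j\le i-1$, i.e.\ column $i-1$ of $P$. Since $\alpha_{k-1}$ was bumped from the cell $c_{k-1}$ directly above, which is in row $i-1$ at a column $\ge i$, the tableau conditions (weak increase along columns and rows, and distinct $|\cdot|$) give that these entries are smaller than $\alpha_{k-1}$. In the case $k=1$, $X=X_0$, we use the well-definedness hypothesis of $P\wsmx\alpha$ instead.

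For a column $X_{k-1}$ of $\mathcal{O}$, the top-to-bottom search reaches a cell of $P$ (or the first empty cell, the one just right of a diagonal cell) before reaching any cell of $\omega(P)$. The hypothesis that the search stops off the diagonal, together with the fact that column searches never reach the diagonal, shows that the stopping cell is $c_k$.

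Thus the bumped cells coincide with $c_k$, and the modifications made by the $\omega$-copy in (2) do not interfere. In (2) we apply the same inequality argument to $\bbackslash P\ox\alpha$, using that by (1) its lower half is $\omega(P\wsmx\alpha)$ restricted appropriately. The strict versus weak comparison ($>$ in rows, $\ge$ in columns) is harmless because all $|\cdot|$ values are distinct, except for the pairs $\{a,\ol a\}$ duplicated across the diagonal. This duplication arises only at diagonal cells, which are excluded.

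\textbf{Assertions (1) and (4).} These are the transpose of the previous argument. Under $\omega$, the row and column searches in $P$ correspond to column and row searches in $\omega(P)$, with the nature reversed. The one subtlety is the strict/weak asymmetry: transposing turns a strict search in a row into a weak search in a column. Again this only matters when equal letters are compared, which cannot happen off the diagonal.

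\textbf{Termination and the main obstacle.} Termination of all four procedures at $c_n$ or $\omega(c_n)$ follows since $c_n$ and $\omega(c_n)$ are empty in the relevant tableau; the shape of $\bbackslash P$ plus one new cell is still a partition shape, because $c_n$ is off-diagonal. The main obstacle is the interaction term in (2) and (4). When the second insertion runs, the other half has already been altered, and we must check that the altered entries near the diagonal (the cells $(i,i-1)$ adjacent to $(i,i)$) still do not stop the search prematurely. I would handle this by noting that the altered entries in the other half are exactly the $\alpha_k$ at positions $\omega(c_k)$, and by verifying the needed inequality only in the two cells adjacent to the diagonal, using Remark~\ref{aboveleft}.
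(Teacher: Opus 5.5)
\textbf{There is a genuine gap, in the row case of (2) and (3).} Your overall plan is the paper's: induction on $k$, a split according to whether $X_{k-1}$ is a row or a column, and a check that each search runs through the correct half of the doubled tableau. The gap comes from a miscomputation of $\omega$.

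For a below-diagonal cell $(i,j)$ with $0\le j\le i-1$ one has $\omega(i,j)=(j+1,i)$. So the cells of row $i$ to the left of $(i,i)$ hold $P(1,i),\dots,P(i,i)$, which is column $i$ of $P$, not column $i-1$. In particular the diagonal letter $P(i,i)$ sits at $(i,i-1)$, immediately left of its twin at $(i,i)$.

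Your bound through $c_{k-1}$ (row $i-1$, column $\ge i$) controls $P(m,i)$ for $m<i$. It says nothing about $P(i,i)$, which lies \emph{below} $(i-1,i)$. The same is true of your $k=1$ appeal to well-definedness. You use the hypothesis that $c_k$ is off-diagonal only in the column case, where it is automatic. You never use it in the row case, where it is essential.

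Without that hypothesis your conclusion is false. Take $P(1,1)=1$, $P(1,2)=3$, $P(2,2)=4$, and insert $\alpha=2$ along the first row. Then $c_1=(1,2)$ meets your conditions with $\alpha_1=3$. Yet $(\bbackslash P)(2,1)=4>3$, so the search in row $2$ of $\bbackslash P$ stops at $(2,1)=\omega(c_2)$. This is exactly the diagonal situation handled in Part~III of the proof of Proposition~\ref{6m}.

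\textbf{The missing step is the paper's key inequality.} The Worley--Sagan mixed search in row $i$ of $\mathcal{O}$ begins at $(i,i)$. That cell must be occupied, since otherwise the search would end there, on the diagonal. It must not stop the search, since $c_k\neq(i,i)$. Hence $P(i,i)=P_{k-1}(i,i)<\alpha_{k-1}$.

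Under the lemma's hypotheses neither $(i,i)$ nor $\omega(i,i)=(i,i-1)$ is ever altered. So the row condition of the current unshifted tableau gives that every entry of row $i$ left of $(i,i)$ is $\le P(i,i)<\alpha_{k-1}$. The strict row search therefore passes into the above-diagonal part. This argument works verbatim in both (2) and (3).

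\textbf{Your ``main obstacle'' is misdiagnosed.} The cells $(i,i-1)$ are never modified here. Remark~\ref{aboveleft} concerns the well-definedness of the first bump, not where a later search stops.

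\textbf{Assertions (1) and (4) are essentially fine.} There the letter travels down column $i-1$. Its above-diagonal cells really are column $i-1$ of the upper half, so your $c_{k-1}$ bound (or column strictness, giving entries $<P(i,i)$) shows they are skipped. The below-diagonal search then mirrors the search in row $i$ of $P_{k-1}$, and it skips $(i,i-1)$ for the same reason: $P(i,i)<\alpha_{k-1}$.

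Minor point: the pairs $\{a,\ol a\}$ you mention arise in $\bp$. In $\bbackslash P$ every letter occurs twice with the same color.
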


\begin{proof}[\textrm{Proof of Lemma~\ref{6l}}]
(1) We assume that $A_1, \dots, A_{k-1}$ holds, and we will show $A_k$. In case where $k=1$, we assume nothing. Let $T_0=\bbackslash P$ and $T_{k-1}$ be the tableau right after the replacement of $\alpha_{k-1}$ in $\omega(c_{k-1})$ with $\alpha_{k-2}$ in the insertion $\ox\alpha$ applied to $\bbackslash P$ if $k\geq 2$. 

If $X_{k-1}$ is a column, the arrangement of letters in the below diagonal part of $\omega(X_{k-1})$ of $T_{k-1}$ is the same as in $X_{k-1}$ of $P_{k-1}$. Hence, $A_k$ holds. 
If $X_{k-1}$ is a row, the above diagonal part of $\omega(X_{k-1})$ of $T_{k-1}$ is filled with letters smaller than $T_{k-1}(\omega(\Delta\cap X_{k-1}))=T_{k-1}(\Delta\cap X_{k-1})=T(\Delta\cap X_{k-1})=P(\Delta\cap X_{k-1})$. Because $c_k$ is not diagonal, $P(\Delta\cap X_{k-1})<\alpha_{k-1}$. Hence $\alpha_{k-1}$ does not bump above diagonal cells. Moreover, the arrangement of letters in the below diagonal part of $\omega(X_{k-1})$ is the same as in  $X_{k-1}$ of $P_{k-1}$. Therefore, $\alpha_{k-1}$ enters $\omega(X_{k-1})$ and bumps $\alpha_k$ in $\omega(c_k)$. 

(2) We assume that $B_1, \dots, B_{k-1}$ holds, and we will show $B_k$. If $k=1$, we assume nothing. 
Let $U_0=\bbackslash P\ox\alpha$ and $U_{k-1}$ be the tableau right after the replacement of $\alpha_{k-1}$ in $c_{k-1}$ with $\alpha_{k-2}$ in the insertion $\x\alpha$ applied to $\bbackslash P\ox\alpha$ if $k\geq 2$. 

If $X_{k-1}$ is a column, the arrangement of letters in the above diagonal part of $X_{k-1}$ of $U_{k-1}$ is the same as in $X_{k-1}$ of $P_{k-1}$. Hence, $B_k$ holds. 
If $X_{k-1}$ is a row, the below diagonal part of $X_{k-1}$ of $U_{k-1}$ is filled with letters smaller than or equal to $U_{k-1}(\Delta\cap X_{k-1})=P(\Delta\cap X_{k-1})$. Because $c_k$ is not diagonal, $P(\Delta\cap X_{k-1})<\alpha_{k-1}$. Hence $\alpha_{k-1}$ does not bump below diagonal cells. Moreover, the arrangement of letters in the above diagonal part of $X_{k-1}$ is the same as in  $X_{k-1}$ of $P_{k-1}$. Therefore, $\alpha_{k-1}$ enters $X_{k-1}$ and bumps $\alpha_k$ in $c_k$. 

(3) The proof of (3) is the same as (2) except for a slight modification. We should modify the definition of $U_{k-1}$ in the proof of (2) to the tableau right after the replacement of $\alpha_{k-1}$ in $c_{k-1}$ with $\alpha_{k-2}$ in the insertion $\x\alpha$ applied to $\bbackslash P$. This modified proof works for (3). 

(4) The proof of (4) is the same as (1) except for a slight modification. We should modify the definition of $T_{k-1}$ in the proof of (1) to the tableau right after the replacement of $\alpha_{k-1}$ in $\omega(c_{k-1})$ with $\alpha_{k-2}$ in the insertion $\ox\alpha$ applied to $\bbackslash P\x\alpha$. This modified proof works for (4). 
\end{proof}
\medskip
Let us return to the proof of Proposition~\ref{6m}. The proof is divided into parts \textup{I}--\textup{III}.

\bigskip

\noindent
\textbf{Part \textup{I}.} \textit{Proof of \eqref{6.1} and \eqref{6.2} in the case $N=0$.}

By Lemma~\ref{6l}\textup{(1)}, we have
\[
\bbackslash P\ox\alpha=P\cup\omega
\left(P\wsmx\alpha\right). 
\]
Moreover, by Lemma~\ref{6l}(2), we have 
\[
\bbackslash P\ox\alpha\x\alpha=\left(P\wsmx\alpha\right)\cup
\omega
\left(P\wsmx\alpha\right), 
\]
which is equivalent to \eqref{6.1}. 
Similarly, by Lemma~\ref{6l}(3), we have 
\[
\bbackslash P\x\alpha=\left(P\wsmx\alpha\right)\cup\omega(P), 
\]
and by Lemma~\ref{6l}(4), we have 
\[
\bbackslash P\x\alpha\ox\alpha=\left(P\wsmx\alpha\right)\cup
\omega
\left(P\wsmx\alpha\right), 
\]
which is equivalent to \eqref{6.2}. 
This completes the proof for the case $N=0$. 
\hfill $\diamondsuit$

\bigskip

Now, we have shown Proposition~\ref{6m} in the case $N=0$. 
Suppose that the diagonal letters are bumped $N\geq 1$ times during the insertion $\wsmx\alpha$ applied to $P$. 
Let $\alpha_0=\alpha$. Suppose $\alpha_k$ bumps $\alpha_{k+1}$ in $c_{k+1}$ during this insertion for $0\leq k\leq n-1$, where $c_1, \dots, c_{n-1}$ are off-diagonal and $c_n$ is diagonal. 
Let $X_k$ be the row or column $\alpha_k$ is inserted after being bumped from $c_k$ for $1\leq k\leq n$. 
Let $Y$ be the column containing $c_n$. Note that, since $c_n$ is diagonal, $X_{n-1}$ must be a row, and $X_n$ must be a column. Hence $Y\neq X_{n-1}$. 
For a cell $c=(i, j)\in\mathcal{O}$, define 
\begin{align*}
S_\mathcal{O}(c)=\{\,(i', j')\in\mathcal{O} \mid i'\leq i, j'\leq j\,\}. 
\end{align*}
Then, set 
\[
D_0 = \bigcup_{r=1}^{n-1} S_{\mathcal{O}}(c_r), 
\qquad
D_1 = D_0 \cup S_{\mathcal{O}}(c_n),
\qquad
D_2 = \mathcal{O}\setminus D_1 .
\]

\bigskip

\noindent
\textbf{Part \textup{II}.} \textit{Proof of \eqref{6.1} in the general case.}

\begin{figure}
\centering
\begin{minipage}{0.45\linewidth}
\begin{tikzpicture}[x=1.2cm, y=1.2cm, yscale=-1]
\fill[black!10] (2.5,0.5) rectangle (3.5,6.5);
\fill[black!30] (3.5,0.5) rectangle (4.5,6.5);
\fill[blue!10, opacity=0.5] (0.5,3.5) rectangle (6.5,4.5);
\foreach \x in {1.5, ..., 6.5} \draw[line width=1.5pt] (\x-1,\x)--(\x,\x);
\foreach \x in {0.5, ..., 5.5} \draw[line width=1.5pt] (\x,\x+1)--(\x,\x);
\draw[line width=1.5pt](2.5,2.5)--(3.5,2.5);
\draw[line width=1.5pt](3.5,2.5)--(3.5,3.5);
\draw[line width=1.5pt](2.5,3.5)--(2.5,4.5);
\draw[line width=1.5pt](2.5,4.5)--(4.5,4.5);
\draw[line width=1.5pt](3.5,3.5)--(4.5,3.5)--(4.5,4.5);
\draw[line width=1.5pt](1.5,5.5)--(1.5,6.5)--(2.5,6.5)--(2.5,5.5)--(1.5,5.5);
\draw(2.7,2.7)node[font=\scriptsize]{$d$};
\draw(3.7,3.7)node[font=\scriptsize]{$c_n$};
\draw(2.85,3.65)node[font=\scriptsize]{$\omega(c_n)$};
\draw(1.95,5.65)node[font=\scriptsize]{$\omega(c_{n-1})$};
\draw(3,4)node[font=\normalsize]{$\alpha_n$};
\draw(4,4)node[font=\normalsize]{$\alpha_n$};
\draw(2,6)node[font=\normalsize]{$\alpha_{n-2}$};
\draw(3,6)node[font=\scriptsize]{$\omega(X_{n-1})$};
\draw(6,4)node[font=\scriptsize]{$X_{n-1}$};
\draw(4,6)node[font=\scriptsize]{$Y$};
\draw(3,1.5)node[font=\normalsize]{$\downarrow$};
\draw(3,1)node[font=\normalsize]{$\alpha_{n-1}$};
\draw(4,0.5)node[font=\normalsize]{\phantom{(Case 2)}};
\end{tikzpicture}
\caption{\label{tn-1}$T_{n-1}$ and $\alpha_{n-1}$ bumped from $\omega(c_{n-1})$. }
\end{minipage}
\hspace{10pt}
\begin{minipage}{0.45\linewidth}
\begin{tikzpicture}[x=1.2cm, y=1.2cm, yscale=-1]
\fill[black!50] (4.5,0.5) rectangle (5.5,6.5);
\fill[black!30] (3.5,0.5) rectangle (4.5,6.5);
\fill[blue!10, opacity=0.5] (0.5,3.5) rectangle (6.5,4.5);
\fill[blue!30, opacity=0.5] (0.5,4.5) rectangle (6.5,5.5);
\foreach \x in {1.5, ..., 6.5} \draw[line width=1.5pt] (\x-1,\x)--(\x,\x);
\foreach \x in {0.5, ..., 5.5} \draw[line width=1.5pt] (\x,\x+1)--(\x,\x);
\draw[line width=1.5pt](2.5,2.5)--(3.5,2.5);
\draw[line width=1.5pt](3.5,2.5)--(3.5,3.5);
\draw[line width=1.5pt](2.5,3.5)--(2.5,4.5);
\draw[line width=1.5pt](2.5,4.5)--(4.5,4.5);
\draw[line width=1.5pt](3.5,3.5)--(4.5,3.5)--(4.5,4.5);
\draw(2.7,2.7)node[font=\scriptsize]{$d$};
\draw(3.7,3.7)node[font=\scriptsize]{$c_n$};
\draw(2.85,3.65)node[font=\scriptsize]{$\omega(c_n)$};
\draw(3,4)node[font=\normalsize]{$\alpha_{n-1}$};
\draw(4,4)node[font=\normalsize]{$\alpha_n$};
\draw(5,6)node[font=\scriptsize]{$X_n$};
\draw(6,4)node[font=\scriptsize]{$X_{n-1}$};
\draw(6,5)node[font=\scriptsize]{$\omega(X_n)$};
\draw(4,6)node[font=\scriptsize]{$Y$};
\draw(4,1.5)node[font=\normalsize]{$\downarrow$};
\draw(4,1)node[font=\normalsize]{$\alpha_n$};
\draw(4,0.5)node[font=\normalsize]{(Case 2)};
\draw(1,5)node[font=\normalsize]{$\alpha_n$};
\draw(1.5,5)node[font=\normalsize]{$\rightarrow$};
\draw(1,4.5)node[font=\normalsize]{(Case 1)};
\end{tikzpicture}
\caption{\label{tn}$T_{n}$ and $\alpha_n$ bumped from $\omega(c_n)$. }
\end{minipage}
\caption*{}
\end{figure}
 
We consider $\ox\alpha$ applied to $\bbackslash P$. 
When applying $\ox\alpha$ to $\bbackslash (P|_{D_0\setminus\{c_{n-1}\}})$, 
the statement $A_k$ $(1\leq k\leq n-1)$ holds by Lemma~\ref{6l}(1). 
In $\ox\alpha$ applied to $\bbackslash P$, the letter moves in the same way as $\ox\alpha$ applied to $\bbackslash (P|_{D_0\setminus\{c_{n-1}\}})$ until $\alpha_{n-1}$ in $\omega(c_{n-1})$ is bumped. 
Let $T_{n-1}$ be the tableau right after the replacement of $\alpha_{n-1}$ in $\omega(c_{n-1})$. See Figure~\ref{tn-1}. 
Note that, since $T_{n-1}$ differs from $\bbackslash P$ only in the letters placed in $\omega(c_1), \dots, \omega(c_{n-1})$, all of which are off-diagonal, we have $T_{n-1}(c)=P(c)$ for all diagonal cells $c$. 
After being bumped from $\omega(c_{n-1})$, $\alpha_{n-1}$ enters $\omega(X_{n-1})$. 
Since both the cell $d=\Delta\cap\omega(X_{n-1})$ and the off-diagonal cell $c_{n-1}$ lie in the row immediately above $X_{n-1}$, $T_{n-1}(d)=P(d)<P(c_{n-1})=\alpha_{n-1}$. 
Hence $\alpha_{n-1}$ bumps $\alpha_n$ in $\omega(c_n)$. 

Let $T_n$ be the tableau right after the replacement of $\alpha_n$ in $\omega(c_n)$. See Figure~\ref{tn}. 
After being bumped from $\omega(c_n)$, there are two possible cases for the behavior of $\alpha_n$. 
\begin{itemize}
\item Case 1: (i) $\ox\alpha$ is an ordinary mixed bumping and $\alpha_n$ is unbarred, or (ii) $\ox\alpha$ is a dual mixed bumping and $\alpha_n$ is barred. In this case, $\alpha_n$ is inserted into the row immediately below $X_{n-1}$, which is none other than $\omega(X_n)$, as can be seen in Figure~\ref{tn}.

\item Case 2: (i) $\ox\alpha$ is an ordinary mixed bumping and $\alpha_n$ is barred, or (ii) $\ox\alpha$ is a dual mixed bumping and $\alpha_n$ is unbarred. In this case, $\alpha_n$ is inserted into the column $Y$ and bumps $\alpha_n$ in $c_n$. This occurs because $T_n(c_n)=\alpha_n$, and 
the letters located strictly above $c_n$ in $Y$ are smaller than $T_n(c_n)$. After being bumped from $c_n$, $\alpha_n$ is inserted into the column immediately right to $Y$, which is none other than $X_n$, as can be seen in Figure~\ref{tn}.
\end{itemize}

Therefore, 
\begin{align}\label{wsm1-1}
\bbackslash P\ox\alpha=P|_{D_1}\cup
\omega\left(\left.\left(P\wsmx\alpha\right)\right|_{D_1}\right)\cup
\left(
\left.\left(\bbackslash P\right)\right|_{D_2\cup \omega(D_2)}\xleftarrow[Z]{}\alpha_n
\right). 
\end{align}
Here,
\begin{align}\label{z}
Z = 
\begin{cases}
\omega(X_n) & \text{in Case 1}, \\
X_n & \text{in Case 2}.
\end{cases}
\end{align}

\begin{figure}
\centering
\begin{minipage}{0.45\linewidth}
\begin{tikzpicture}[x=1.2cm, y=1.2cm, yscale=-1]
\fill[blue!10, opacity=0.5] (0.5,3.5) rectangle (6.5,4.5);
\foreach \x in {1.5, ..., 6.5} \draw[line width=1.5pt] (\x-1,\x)--(\x,\x);
\foreach \x in {0.5, ..., 5.5} \draw[line width=1.5pt] (\x,\x+1)--(\x,\x);
\draw[line width=1.5pt](2.5,3.5)--(2.5,4.5)--(4.5,4.5);
\draw[line width=1.5pt](5.5,2.5)--(6.5,2.5)--(6.5,3.5)--(5.5,3.5)--(5.5,2.5);
\draw[line width=1.5pt](3.5,3.5)--(4.5,3.5)--(4.5,4.5);
\draw(3.7,3.7)node[font=\scriptsize]{$c_n$};
\draw(2.85,3.65)node[font=\scriptsize]{$\omega(c_n)$};
\draw(5.8,2.7)node[font=\scriptsize]{$c_{n-1}$};
\draw(3,4)node[font=\normalsize]{$\alpha_{n-1}$};
\draw(4,4)node[font=\normalsize]{$\alpha_n$};
\draw(6,3)node[font=\normalsize]{$\alpha_{n-2}$};
\draw(6,4)node[font=\scriptsize]{$X_{n-1}$};
\draw(1.6,4)node[font=\normalsize]{$\rightarrow$};
\draw(1,4)node[font=\normalsize]{$\alpha_{n-1}$};
\draw(4,0.5)node[font=\normalsize]{\phantom{(Case 2)}};
\end{tikzpicture}
\caption{\label{un-1}$U_{n-1}$ and $\alpha_{n-1}$ bumped from $c_{n-1}$. }
\end{minipage}
\hspace{10pt}
\begin{minipage}{0.45\linewidth}
\begin{tikzpicture}[x=1.2cm, y=1.2cm, yscale=-1]
\fill[black!50] (4.5,0.5) rectangle (5.5,6.5);
\fill[blue!30, opacity=0.5] (0.5,4.5) rectangle (6.5,5.5);
\foreach \x in {1.5, ..., 6.5} \draw[line width=1.5pt] (\x-1,\x)--(\x,\x);
\foreach \x in {0.5, ..., 5.5} \draw[line width=1.5pt] (\x,\x+1)--(\x,\x);
\draw[line width=1.5pt](2.5,3.5)--(2.5,4.5);
\draw[line width=1.5pt](2.5,4.5)--(4.5,4.5);
\draw[line width=1.5pt](3.5,3.5)--(4.5,3.5)--(4.5,4.5);
\draw(3.7,3.7)node[font=\scriptsize]{$c_n$};
\draw(2.85,3.65)node[font=\scriptsize]{$\omega(c_n)$};
\draw(3,4)node[font=\normalsize]{$\alpha_{n-1}$};
\draw(4,4)node[font=\normalsize]{$\alpha_{n-1}$};
\draw(5,6)node[font=\scriptsize]{$X_n$};
\draw(6,5)node[font=\scriptsize]{$\omega(X_n)$};
\draw(5,1.5)node[font=\normalsize]{$\downarrow$};
\draw(5,1)node[font=\normalsize]{$\alpha_n$};
\draw(5,0.5)node[font=\normalsize]{(Case 1)};
\draw(1,5)node[font=\normalsize]{$\alpha_n$};
\draw(1.5,5)node[font=\normalsize]{$\rightarrow$};
\draw(1,4.5)node[font=\normalsize]{(Case 2)};
\end{tikzpicture}
\caption{\label{un}$U_{n}$ and $\alpha_n$ bumped from $c_n$. }
\end{minipage}
\caption*{}
\end{figure}

Next, consider applying $\x\alpha$ to $\bbackslash P\ox\alpha$. When applying $\x\alpha$ to the tableau obtained by restricting $\bbackslash P \ox \alpha$ to the unshifted diagram $D_0 \cup \omega(D_0)\setminus\{c_{n-1}, \omega(c_{n-1})\}$, 
the statements $B_k$ $(1\leq k\leq n-1)$ hold by \eqref{wsm1-1} and Lemma~\ref{6l}(2). 
When applying $\x\alpha$ to $\bbackslash P\ox\alpha$, the letters move in the same way as in this insertion until $\alpha_{n-1}$ in $c_{n-1}$ is bumped. 
Let $U_{n-1}$ be the tableau right after the replacement of $\alpha_{n-1}$ in $c_{n-1}$. See Figure~\ref{un-1}. 
After being bumped from $c_{n-1}$, $\alpha_{n-1}$ is inserted into $X_{n-1}$ and 
bumps $\alpha_n$ in $c_n$, because $U_{n-1}(\omega(c_n))=\left(\bbackslash P\ox\alpha\right)(\omega(c_n))=\alpha_{n-1}$. 

Let $U_{n}$ be the tableau right after the replacement of $\alpha_n$ in $c_n$. See Figure~\ref{un}. 
$\alpha_n$, after being bumped from $c_n$, enters $X_n$ in Case 1 and $\omega(X_n)$ in Case 2. 
Since the insertion we are considering is $\x\alpha$, note that after $\alpha_n$ is bumped, whether it enters a row or column is the opposite of the case illustrated in Figure~\ref{tn}, which corresponds to $\ox\alpha$. 

From the above, 
\begin{align}\label{wsm1-2}
\bbackslash P\ox\alpha\x\alpha=\bbackslash \left(\left.\left(P\wsmx\alpha\right)\right|_{D_1}\right)\cup
\left(
\left.\left(\bbackslash P\right)\right|_{D_2\cup \omega(D_2)}\xleftarrow[Z]{}\alpha_n\xleftarrow[\!W\!]{}\alpha_n
\right). 
\end{align}
Here, 
\begin{align}\label{w}
W= 
\begin{cases}
X_n & \text{in Case 1}, \\
\omega(X_n) & \text{in Case 2}.
\end{cases}
\end{align}

In $\xleftarrow[\!\!X_n\!\!]{\!\!\!\!\!\text{WSM}\!\!\!\!\!}\alpha_n$ applied to $P|_{D_2}$, 
the letters move in the same way as in $P\wsmx\alpha$ after the bumping of $\alpha_n$ in $c_n$. 
Therefore, when we apply $\xleftarrow[\!\!X_n\!\!]{\!\!\!\!\!\text{WSM}\!\!\!\!\!}\alpha_n$ to $P|_{D_2}$, the diagonal letters are bumped exactly $N-1$ times. 
Hence, we can apply the induction hypothesis to $P|_{D_2}\xleftarrow[\!\!X_n\!\!]{\!\!\!\!\!\text{WSM}\!\!\!\!\!}\alpha_n$. 
By $\left.\left(\bbackslash P\right)\right|_{D_2\cup \omega(D_2)}=\bbackslash(P|_{D_2})$, 
\begin{align*}
\left.\left(\bbackslash P\right)\right|_{D_2\cup \omega(D_2)}\xleftarrow[Z]{}\alpha_n\xleftarrow[\!W\!]{}\alpha_n=\bbackslash(P|_{D_2})\xleftarrow[Z]{}\alpha_n\xleftarrow[W]{}\alpha_n. 
\end{align*}
By \eqref{z} and \eqref{w}, together with the induction hypothesis, this tableau is equal to $\bbackslash\left(P|_{D_2}\xleftarrow[\!\!X_n\!\!]{\!\!\!\!\!\text{WSM}\!\!\!\!\!}\alpha_n\right)$. 
From this fact, together with $P|_{D_2}\xleftarrow[\!\!X_n\!\!]{\!\!\!\!\!\text{WSM}\!\!\!\!\!}\alpha_n=\left.\left(P\wsmx\alpha\right)\right|_{D_2}$ and \eqref{wsm1-2}, we obtain
\begin{align}\label{wsm1-3}
\left.\left(\bbackslash P\ox\alpha\x\alpha\right)\right|_{D_2\cup \omega(D_2)}&=
\bbackslash\left(\left.\left(P\wsmx\alpha\right)\right|_{D_2}\right). 
\end{align}
\eqref{wsm1-2} and \eqref{wsm1-3} complete the proof of \eqref{6.1}. 
\hfill $\diamondsuit$

\bigskip

\noindent
\textbf{Part \textup{III}.} \textit{Proof of \eqref{6.2} in the general case.}

\begin{figure}
\centering
\begin{minipage}{0.45\linewidth}
\begin{tikzpicture}[x=1.2cm, y=1.2cm, yscale=-1]
\fill[blue!10, opacity=0.5] (0.5,3.5) rectangle (6.5,4.5);
\foreach \x in {1.5, ..., 6.5} \draw[line width=1.5pt] (\x-1,\x)--(\x,\x);
\foreach \x in {0.5, ..., 5.5} \draw[line width=1.5pt] (\x,\x+1)--(\x,\x);
\draw[line width=1.5pt](2.5,3.5)--(2.5,4.5);
\draw[line width=1.5pt](2.5,4.5)--(4.5,4.5);
\draw[line width=1.5pt](3.5,3.5)--(4.5,3.5)--(4.5,4.5);
\draw[line width=1.5pt](5.5,2.5)--(6.5,2.5)--(6.5,3.5)--(5.5,3.5)--(5.5,2.5);
\draw[line width=1.5pt](3.5,3.5)--(3.5,2.5)--(4.5,2.5)--(4.5,3.5); 
\draw[line width=1.5pt](2.5,3.5)--(1.5,3.5)--(1.5,4.5)--(2.5,4.5); 
\draw(3.7,3.7)node[font=\scriptsize]{$c_n$};
\draw(2.85,3.65)node[font=\scriptsize]{$\omega(c_n)$};
\draw(4,2.7)node[font=\scriptsize]{$(p\!-\!1, p)$};
\draw(2,3.7)node[font=\scriptsize]{$(p, p\!-\!2)$};
\draw(5.8,2.7)node[font=\scriptsize]{$c_{n-1}$};
\draw(3,4)node[font=\normalsize]{$\alpha_n$};
\draw(4,4)node[font=\normalsize]{$\alpha_n$};
\draw(6,4)node[font=\scriptsize]{$X_{n-1}$};
\draw(6,3)node[font=\normalsize]{$\alpha_{n-2}$};
\draw(1.2,4)node[font=\normalsize]{$\rightarrow$};
\draw(0.6,4)node[font=\normalsize]{$\alpha_{n-1}$};
\draw(4,0.5)node[font=\normalsize]{\phantom{(Case 2)}};
\end{tikzpicture}
\caption{\label{vn-1}$V_{n-1}$ and $\alpha_{n-1}$ bumped from $c_{n-1}$. }
\end{minipage}
\hspace{10pt}
\begin{minipage}{0.45\linewidth}
\begin{tikzpicture}[x=1.2cm, y=1.2cm, yscale=-1]
\fill[black!30] (3.5,0.5) rectangle (4.5,6.5);
\fill[black!50] (4.5,0.5) rectangle (5.5,6.5);
\fill[blue!30, opacity=0.5] (0.5,4.5) rectangle (6.5,5.5);
\foreach \x in {1.5, ..., 6.5} \draw[line width=1.5pt] (\x-1,\x)--(\x,\x);
\foreach \x in {0.5, ..., 5.5} \draw[line width=1.5pt] (\x,\x+1)--(\x,\x);
\draw[line width=1.5pt](2.5,3.5)--(2.5,4.5);
\draw[line width=1.5pt](2.5,4.5)--(4.5,4.5);
\draw[line width=1.5pt](3.5,3.5)--(4.5,3.5)--(4.5,4.5);
\draw(3.7,3.7)node[font=\scriptsize]{$c_n$};
\draw(2.85,3.65)node[font=\scriptsize]{$\omega(c_n)$};
\draw(3,4)node[font=\normalsize]{$\alpha_{n-1}$};
\draw(4,4)node[font=\normalsize]{$\alpha_n$};
\draw(6,5)node[font=\scriptsize]{$\omega(X_n)$};
\draw(4,6)node[font=\scriptsize]{$Y$};
\draw(5,6)node[font=\scriptsize]{$X_n$};
\draw(1.3,5)node[font=\normalsize]{$\rightarrow$};
\draw(0.7,5)node[font=\normalsize]{$\alpha_n$};
\draw(4,1.5)node[font=\normalsize]{$\downarrow$};
\draw(4,1)node[font=\normalsize]{$\alpha_n$};
\draw(4,0.5)node[font=\normalsize]{(Case 1)};
\draw(0.5,4.5)node[font=\normalsize]{(Case 2)};
\draw(4,0.5)node[font=\normalsize]{\phantom{(Case 2)}};
\end{tikzpicture}
\caption{\label{vn}$V_{n}$ and $\alpha_n$ bumped from $\omega(c_n)$. }
\end{minipage}
\caption*{}
\end{figure}
We consider $\x\alpha$ applied to $\bbackslash P$. 
When applying $\x\alpha$ to $\bbackslash\ (P|_{D_0\setminus\{c_{n-1}\}})$, 
the statememts $B_k$ $(1\leq k\leq n-1)$ hold by Lemma~\ref{6l}(3). 
In the application of $\x\alpha$ to $\bbackslash P$, the letters move in the same way as this insertion until $\alpha_{n-1}$ in $c_{n-1}$ is bumped. Let $V_{n-1}$ be the tableau right after the replacement of $\alpha_{n-1}$ in $c_{n-1}$. Let $(p, p)$ be the coordinate of $c_n$. See Figure~\ref{vn-1}. Note that the letters in the cells of $V_{n-1}$ other than $c_1, \dots, c_{n-1}$ remain unchanged from $\bbackslash P$ under the insertion. 

After being bumped from $c_{n-1}$, $\alpha_{n-1}$ enters $X_{n-1}$. 
We have $\omega(p, p-2)=(p-1, p)$, and $c_{n-1}$ is located in the off-diagonal part of the $(p-1)$-th row. Thus, $V_{n-1}(p, p-2)=(\bbackslash P)(p, p-2)=P(p-1, p)\leq P(c_{n-1})=\alpha_{n-1}$. Hence, $\alpha_{n-1}$ bumps $\alpha_n$ in $\omega(c_n)$. 

Let $V_n$ be the tableau right after the replacement of $\alpha_n$ in $\omega(c_n)$. 
After being bumped from $\omega(c_n)$, $\alpha_n$ enters $Y$ in Case 1 and $\omega(X_n)$ in Case 2. 
Since we are currently dealing with the insertion $\x\alpha$,  the roles of rows and columns are interchanged compared to the case in Figure~\ref{tn}, illustrating a similar situation for $\ox\alpha$ with regard to the destination of $\alpha_n$. 
In Case 1, $\alpha_n$ bumps the letter $\alpha_n$ located in $c_n$ because $V_n(c_n)=\alpha_n$, and 
the letters located strictly above $c_n$ in $Y$ are smaller than $V_n(c_n)$. 
After being bumped from $c_n$, $\alpha_n$ enters $X_n$. 

Therefore, 
\begin{align}\label{wsm2-1}
\bbackslash P\x\alpha=
\left.\left(P\wsmx\alpha\right)\right|_{D_1\setminus\{c_n\}}\cup
\omega(P|_{D_1\setminus\{c_n\}})&\cup c_n(\alpha_n)\cup \omega(c_n)(\alpha_{n-1})\notag\\&\cup
\left(
\left.\left(\bbackslash P\right)\right|_{D_2\cup \omega(D_2)}\xleftarrow[\!W\!]{}\alpha_n
\right). 
\end{align}
Here, \(c_n(\alpha_n)\) denotes the cell \(c_n\) with entry \(\alpha_n\), \(\omega(c_n)(\alpha_{n-1})\) denotes the cell \(\omega(c_n)\) with entry \(\alpha_{n-1}\), and
\begin{align}\label{ww}
W= 
\begin{cases}
X_n & \text{in Case 1}, \\
\omega(X_n) & \text{in Case 2}. 
\end{cases}
\end{align}
This definition of $W$ is the same as the one in \eqref{w}. 

\begin{figure}
\centering
\begin{minipage}{0.45\linewidth}
\begin{tikzpicture}[x=1.2cm, y=1.2cm, yscale=-1]
\fill[black!10] (2.5,0.5) rectangle (3.5,6.5);
\fill[black!30] (3.5,0.5) rectangle (4.5,6.5);
\foreach \x in {1.5, ..., 6.5} \draw[line width=1.5pt] (\x-1,\x)--(\x,\x);
\foreach \x in {0.5, ..., 5.5} \draw[line width=1.5pt] (\x,\x+1)--(\x,\x);
\draw[line width=1.5pt](2.5,3.5)--(2.5,4.5)--(3.5,4.5);
\draw[line width=1.5pt](3.5,3.5)--(4.5,3.5)--(4.5,4.5);
\draw[line width=1.5pt](3.5,3.5)--(3.5,2.5)--(4.5,2.5)--(4.5,3.5);
\draw[line width=1.5pt](1.5,5.5)--(1.5,6.5)--(2.5,6.5)--(2.5,5.5)--(1.5,5.5);
\draw[line width=1.5pt](5.5,2.5)--(6.5,2.5)--(6.5,3.5)--(5.5,3.5)--(5.5,2.5);
\draw(2.85,3.65)node[font=\scriptsize]{$\omega(c_n)$};
\draw(1.95,5.65)node[font=\scriptsize]{$\omega(c_{n-1})$};
\draw(3.7,3.7)node[font=\scriptsize]{$c_n$};
\draw(3,4)node[font=\normalsize]{$\alpha_{n-1}$};
\draw(2,6)node[font=\normalsize]{$\alpha_{n-2}$};
\draw(3,1.5)node[font=\normalsize]{$\downarrow$};
\draw(3,1)node[font=\normalsize]{$\alpha_{n-1}$};
\draw(3,6)node[font=\scriptsize]{$\omega(X_{n-1})$};
\draw(4,2.7)node[font=\scriptsize]{$(p\!-\!1, p)$};
\draw(5.8,2.7)node[font=\scriptsize]{$c_{n-1}$};
\draw(6,3)node[font=\normalsize]{$\alpha_{n-2}$};
\draw(4,4)node[font=\normalsize]{$\alpha_n$};
\draw(4,6)node[font=\scriptsize]{$Y$};
\draw(4,0.5)node[font=\normalsize]{\phantom{(Case 2)}};
\end{tikzpicture}
\caption{\label{wn-1}$W_{n-1}$ and $\alpha_{n-1}$ bumped from $\omega(c_{n-1})$. }
\end{minipage}
\hspace{10pt}
\begin{minipage}{0.45\linewidth}
\begin{tikzpicture}[x=1.2cm, y=1.2cm, yscale=-1]
\fill[black!50] (4.5,0.5) rectangle (5.5,6.5);
\fill[blue!30, opacity=0.5] (0.5,4.5) rectangle (6.5,5.5);
\foreach \x in {1.5, ..., 6.5} \draw[line width=1.5pt] (\x-1,\x)--(\x,\x);
\foreach \x in {0.5, ..., 5.5} \draw[line width=1.5pt] (\x,\x+1)--(\x,\x);
\draw[line width=1.5pt](2.5,3.5)--(2.5,4.5);
\draw[line width=1.5pt](2.5,4.5)--(4.5,4.5);
\draw[line width=1.5pt](3.5,3.5)--(4.5,3.5)--(4.5,4.5);
\draw(3.7,3.7)node[font=\scriptsize]{$c_n$};
\draw(2.85,3.65)node[font=\scriptsize]{$\omega(c_n)$};
\draw(3,4)node[font=\normalsize]{$\alpha_{n-1}$};
\draw(4,4)node[font=\normalsize]{$\alpha_{n-1}$};
\draw(5,6)node[font=\scriptsize]{$X_n$};
\draw(6,5)node[font=\scriptsize]{$\omega(X_n)$};
\draw(5,1.5)node[font=\normalsize]{$\downarrow$};
\draw(5,1)node[font=\normalsize]{$\alpha_n$};
\draw(5,0.5)node[font=\normalsize]{(Case 2)};
\draw(1,5)node[font=\normalsize]{$\alpha_n$};
\draw(1.5,5)node[font=\normalsize]{$\rightarrow$};
\draw(1,4.5)node[font=\normalsize]{(Case 1)};
\end{tikzpicture}
\caption{\label{wn}$W_{n}$ and $\alpha_n$ bumped from $c_n$. }
\end{minipage}
\caption*{}
\end{figure}

Next, consider $\ox\alpha$ applied to $\bbackslash P\x\alpha$. 
When applying $\ox\alpha$ to the tableau obtained by restricting $\bbackslash P\x\alpha$ to the shifted diagram $D_0\cup\omega(D_0)\setminus\{c_{n-1}, \omega(c_{n-1})\}$, 
the statements $A_k$ $(1\leq k\leq n-1)$ hold by Lemma~\ref{6l}(4). 
When applying $\ox\alpha$ to the entire tableau $\bbackslash P\x\alpha$, the letters move in the same way as in this insertion until $\alpha_{n-1}$ in $\omega(c_{n-1})$ is bumped. 
Let $W_{n-1}$ be the tableau immediately after $\alpha_{n-1}$ is bumped from $\omega(c_{n-1})$. See Figure~\ref{wn-1}. Note that, because $\alpha_{n-1}$ is inserted into a row in $\wsmx\alpha$ before the diagonal letter $\alpha_n$ in $c_n$ is bumped, $\alpha_{n-1}$ is inserted into a column in $\ox\alpha$. 
After being bumped from $\omega(c_{n-1})$, $\alpha_{n-1}$ enters $\omega(X_{n-1})$ and then bumps $\alpha_{n-1}$ in $\omega(c_n)$, since the cells in $\omega(X_{n-1})$ strictly above $\omega(c_n)$ are filled with letters smaller than $\alpha_{n-1}$. Since $\alpha_{n-1}$ bumps another $\alpha_{n-1}$, $W_{n-1}$ remains unchanged after going through this bumping process. 
Subsequently, the letter $\alpha_{n-1}$ bumped from $\omega(c_n)$ enters $Y$. 
Since $c_{n-1}$ is an off-diagonal cell in the $(p-1)$-th row, we have
$W_{n-1}(p-1, p)\leq W_{n-1}(c_{n-1})=\alpha_{n-2}<\alpha_{n-1}$. 
Hence, $\alpha_{n-1}$ bumps $\alpha_n$ in $c_n$. 

Let $W_n$ be the tableau immediately after the replacement of $\alpha_n$ in $c_n$. See Figure~\ref{wn}. 
After being bumped from $c_n$, $\alpha_n$ enters $\omega(X_n)$ in Case 1 and $X_n$ in Case 2. 
Note that, since we are dealing with the insertion $\ox\alpha$, the behavior of $\alpha_n$ after being bumped is the same as the case illustrated in Figure~\ref{tn}. 

From the above, 
\begin{align}\label{wsm2-2}
\bbackslash P\x\alpha\ox\alpha=\bbackslash \left(\left.\left(P\wsmx\alpha\right)\right|_{D_1}\right)\cup
\left(
\left.\left(\bbackslash P\right)\right|_{D_2\cup \omega(D_2)}\xleftarrow[\!W\!]{}\alpha_n\xleftarrow[Z]{}\alpha_n
\right). 
\end{align}
Here, 
\begin{align}\label{zz}
Z= 
\begin{cases}
\omega(X_n) & \text{in Case 1}, \\
X_n & \text{in Case 2}. 
\end{cases}
\end{align}
This definition of $Z$ is the same as the one in \eqref{z}. 

The rest of the proof proceeds exactly as in the corresponding part of the proof of \eqref{6.1}. 
The reader should replace $\eqref{z}$, $\eqref{wsm1-2}$, and $\eqref{w}$ by $\eqref{ww}$, $\eqref{wsm2-2}$, and $\eqref{zz}$, respectively. 
This completes the proof of \eqref{6.2}.
\end{proof}

We now define a transformation on words, which maps words of length $n$ to biwords of length $2n$.
This is the word analogue of the maps $\bbackslash$ and $\bp$ in the tableau setting, which sends a tableau with $n$ cells to a tableau with $2n$ cells.
\begin{dfn}\label{d1}
Define an injective map $d_1 : B_n\to DC_{2n}$ as follows: 

Let $\pi=\alpha_1\dots\alpha_n\in B_n$. 
For each $k$ satisfying $1\leq k\leq n$, define column vectors $u_{2k-1}$ and $u_{2k}$ by
\begin{align*}
u_{2k-1}=
\begin{pmatrix}
\ol{k} \\
\alpha_k  \\
\end{pmatrix}, 
u_{2k}=
\begin{pmatrix}
k \\
\alpha_k  \\
\end{pmatrix}. 
\end{align*}

We define $d_1(\pi)$ to be the doubly colored biword of length $2n$, whose columns are $u_1, \dots, u_{2n}$ arranged in this order. 
\end{dfn}

The Worley--Sagan mixed insertion can be embedded into the doubly mixed insertion using the maps
$\bbackslash$\,, $\bp$, and $d_1$ as follows: 

\begin{thm}\label{WSMDM}
For $\pi\in B_n$, we have
\begin{align*}
\bbackslash P_\text{WSM}(\pi)&=P_\text{M*}(d_1(\pi)), \\
\bp Q_\text{WSM}(\pi)&=Q_\text{M*}(d_1(\pi)). 
\end{align*}

\end{thm}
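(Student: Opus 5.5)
We will argue by induction on $n$, proving at each stage $k$ that $\bbackslash P_k=P'_{2k}$ and $\bp Q_k=Q'_{2k}$. Here $(P_k,Q_k)$ are the intermediate tableaux of the Worley--Sagan mixed insertion of $\pi=\alpha_1\dots\alpha_n$, and $(P'_j,Q'_j)$ are those of the doubly mixed insertion of $d_1(\pi)=u_1\dots u_{2n}$. The case $k=0$ is trivial. For the inductive step we must check two things: $P'_{2k-2}\dm\alpha_k\m\alpha_k=\bbackslash(P_{k-1}\wsm\alpha_k)$ for the $P$-side, and the identification of the two new cells with their recording letters for the $Q$-side.

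For the $P$-side we apply Proposition~\ref{6m} with $X=R_1$, the first row. First we identify the operators. In $\ox\alpha_k$ the row is $\omega(R_1)=C_0$, the column $0$ of the redefined $\mathcal{Q}$, which is empty in $\bbackslash P_{k-1}$. We then check that inserting $\alpha_k$ into the first column of $\bbackslash P_{k-1}$ with the rule giving $\alpha_k$ the column nature is exactly $\dm\alpha_k$ if $\alpha_k$ is unbarred and $\m\alpha_k$ if it is barred. Similarly, $\x\alpha_k$ with $X=R_1$ is the insertion giving $\alpha_k$ the row nature, i.e.\ $\m$ or $\dm$ respectively.

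This creates a mismatch to resolve. The biword always applies $\dm$ (top letter $\ol{k}$) first and then $\m$ (top letter $k$), whereas Proposition~\ref{6m} applies the column-nature rule first. For unbarred $\alpha_k$ these agree by \eqref{6.1}. For barred $\alpha_k$ the order is reversed, and we use \eqref{6.2} instead. This is exactly why the proposition records both orders. We also must confirm that the first column of $\bbackslash P$ is $\omega$ of the first row, so that the "column $0$" bookkeeping matches the unshifted convention; here we shift indices, noting that $\mathcal{Q}$ now starts at column $0$ and $\bbackslash P$ occupies columns $0,1,\dots$. The hypothesis that $\alpha$ does not enter a diagonal cell is vacuous for $X$ a row. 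Well-definedness holds since $X$ is the first row.

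For the $Q$-side, the two new cells of $\bbackslash P_k$ are $c_k$ and $\omega(c_k)$, and we must determine which is created by which insertion. If $c_k$ is off-diagonal, we trace the final step in the proof of Proposition~\ref{6m}: the last bump lands in $c_k$ during the $\x$ insertion precisely when the WSM insertion ends under the rule matching that insertion's rule. Otherwise it lands in $\omega(c_k)$.

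With this, we carry out a case check of the four combinations of the colour of $\alpha_k$ and the final rule of the WSM insertion. The claim to verify is that the recording letter placed in $c_k$ is $k$ when the insertion ends with ordinary bumping and $\ol{k}$ otherwise, with the other letter placed in $\omega(c_k)$. That is, $Q'$ restricted to $\{c_k,\omega(c_k)\}$ is $\tilde k$ on $c_k$ and $\ol{\tilde k}$ on $\omega(c_k)$, matching $\bp Q_k=Q_k\cup\ol{\omega}(Q_k)$.

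If $c_k$ is diagonal, then $c_k=(p,p)$ and $\omega(c_k)=(p,p-1)$, and we must show that $\omega(c_k)$ is filled by the $\dm$ step (letter $\ol{k}$) and $c_k$ by the $\m$ step (letter $k$). This agrees with $Q_k(c_k)=k$ and $\ol{\omega}(Q_k)(\omega(c_k))=\ol{k}$.

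The main obstacle is this recording-tableau bookkeeping: extracting from the inductive proof of Proposition~\ref{6m}, including its Case 1/Case 2 alternation at each diagonal bump, which of the two insertions creates $c_k$. Each rule change swaps $Z$ and $W$, so we would track the parity of the number of rule changes. We expect this parity to line up with whether the final rule is ordinary or dual, relative to the initial rule fixed by the colour of $\alpha_k$.
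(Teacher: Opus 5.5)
Your treatment of the insertion tableaux is exactly the paper's argument. The paper sets $(X_k,Y_k)=(\omega(R_1),R_1)$ for unbarred $\alpha_k$ and $(R_1,\omega(R_1))$ for barred $\alpha_k$, identifies the dual-then-ordinary pair of steps in $d_1(\pi)$ with $\xleftarrow[X_k]{}\alpha_k\xleftarrow[Y_k]{}\alpha_k$, and applies \eqref{6.1} or \eqref{6.2} accordingly. One small slip: column $0$ of the doubled tableau is not empty. It carries $\omega$ of the first row of $P_{k-1}$, as your later sentence correctly says.

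For the recording tableaux your route is more global than the paper's, and as written it is unfinished.

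\textbf{The paper's argument.} It argues locally at the new off-diagonal cell $c_k$. Whichever of the two unshifted insertions fills $c_k$ with $\beta_k$, the direction from which $\beta_k$ arrives (row search or column search), together with the colour of $\beta_k$, determines that insertion's rule. Since $\beta_k$ reaches $c_k$ along the same trajectory in the WSM insertion, this rule is the final WSM rule.

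\textbf{Your parity plan.} Tracking the parity of rule changes through the recursion of Proposition~\ref{6m} reaches the same conclusion. It also makes explicit that the ``same trajectory'' fact comes from the \emph{proof} of that proposition, not from its statement. But it stops at ``we expect''.

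\textbf{How to close it.} An invariant works more simply than parity. At each level of the recursion, the insertion following the path in $\mathcal{O}$ has its rule chosen as in Definition~\ref{wsmx}(1), so it gives the entering letter the same nature as WSM does. Hence its rule is always the current WSM rule. In the proof's case split:
\begin{itemize}
\item Case~2 occurs exactly when the diagonal letter has row nature in the current WSM rule, i.e.\ at a rule change. There the insertion that started in $\omega(X)$ takes over the $\mathcal{O}$-path.
\item Case~1 occurs exactly when the diagonal letter has column nature. WSM permits this bump without any rule change, and no swap happens.
\end{itemize}
So you must count rule changes, not diagonal bumps. At the last level, the $\mathcal{O}$-path insertion fills $c_k$, so its rule is the final WSM rule. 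That gives $\tilde k$ at $c_k$ and $\ol{\tilde k}$ at $\omega(c_k)$.

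\textbf{The diagonal case.} For diagonal $c_k=(p,p)$ you state what is needed but give no reason. It is a pure shape fact. Column $p$ of $P_{k-1}$ consists of $(1,p),\dots,(p-1,p)$, so row $p$ of the doubled tableau is $(p,0),\dots,(p,p-2)$. This forces the first step (dual, top letter $\ol{k}$) to create $\omega(c_k)=(p,p-1)$ and the second step (ordinary, top letter $k$) to create $c_k$. The paper also leaves this case implicit, so supplying it is worthwhile.
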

\begin{proof}
Suppose $\pi=\alpha_1\dots\alpha_n$. Let $R_1$ denote the first row of $\mathcal{Q}$. 
For each integer $k$ with $1\leq k\leq n$, define $P_k=P_\text{WSM}(\alpha_1\dots\alpha_k)$ and set 
\begin{align*}
(X_k, Y_k)=\begin{cases}
(\omega(R_1), R_1) & \text{if $\alpha_k$ is unbarred, }\\
(R_1, \omega(R_1)) & \text{if $\alpha_k$ is barred. }
\end{cases}
\end{align*}
Then, by the definition of the doubly mixed insertion, we have
\begin{align*}
P_\text{M*}(d_1(\pi))=\emptyset\xleftarrow[\!X_1\!]{}\alpha_1\xleftarrow[\!Y_1\!]{}\alpha_1\cdots\xleftarrow[\!X_n\!]{}\alpha_n\xleftarrow[\!Y_n\!]{}\alpha_n. 
\end{align*}
By Proposition~\ref{6m}, we have
\begin{align}\label{xy}
\emptyset\xleftarrow[\!X_1\!]{}\alpha_1\xleftarrow[\!Y_1\!]{}\alpha_1\cdots\xleftarrow[\!X_k\!]{}\alpha_k\xleftarrow[\!Y_k\!]{}\alpha_k=\bbackslash P_k. 
\end{align}
In particular, $P_\text{M*}(d_1(\pi))=\bbackslash P_\text{WSM}(\pi)$.

For each integer $k$ with $1\leq k\leq n$, let $\{c_k\}=D(P_k)\setminus D(P_{k-1})$ and $\beta_k=P_k(c_k)$. Then we have $\{c_k, \omega(c_k)\}=D(\bbackslash P_k)\setminus D(\bbackslash P_{k-1})$. From \eqref{xy}, it follows that, when we apply $\xleftarrow[\!X_k\!]{}\alpha_k\xleftarrow[\!Y_k\!]{}\alpha_k$ to $\bbackslash P_{k-1}$, one of the two insertions $\xleftarrow[\!X_k\!]{}\alpha_k$ or $\xleftarrow[\!Y_k\!]{}\alpha_k$ fills the cell $c_k$ with $\beta_k$, while the other fills the cell $\omega(c_k)$ with $\beta_k$. 
\begin{itemize}
\item Case 1: $\xleftarrow[\!X_k\!]{}\alpha_k$ fills $c_k$. In this case, since $\xleftarrow[\!X_k\!]{}\alpha_k$ is a dual mixed insertion, $\beta_k$ is inserted into the column containing $c_k$ if it is unbarred, and into the row containing $c_k$ if it is barred. By \eqref{xy}, in the Worley--Sagan mixed insertion of $\alpha_1\dots\alpha_k$, $\beta_k$ follows the same trajectory, so that $c_k$ is filled via dual mixed bumping. Hence, $c_k$ is filled with $\ol{k}$ in $Q_\text{WSM}(\pi)$. 
Meanwhile, in $Q_\text{M*}(d_1(\pi))$, $c_k$ is filled with $\ol{k}$ and $\omega(c_k)$ is filled with $k$.

\item Case 2: $\xleftarrow[\!Y_k\!]{}\alpha_k$ fills $c_k$. In this case, since $\xleftarrow[\!Y_k\!]{}\alpha_k$ is an ordinary mixed insertion, the roles of rows and columns are interchanged compared to Case 1. As a result, $c_k$ is filled with $k$ in $Q_\text{WSM}(\pi)$, while, in $Q_\text{M*}(d_1(\pi))$, $c_k$ is filled with $k$ and $\omega(c_k)$ is filled with $\ol {k}$. 
\end {itemize}
This yields $Q_\text{M*}(d_1(\pi))=\bp Q_\text{WSM}(\pi)$. 
\end{proof}

\section{Relationship between the Generalized Mixed Insertion and the Doubly Mixed Insertion}
This section is the analogue of \S 6 for the generalized mixed insertion. 
We will prove that the generalized mixed insertion can be embedded into the doubly mixed insertion by doubling tableaux and words. As before we use the doubling operators $\bbackslash$ and $\bp$; in this section $\bbackslash$ acts on the recording tableaux, whereas $\bp$ acts on the insertion tableaux. The argument proceeds in parallel with that of \S6. 

\begin{dfn}
Let $P$ be a shifted tableau such that all diagonal letters are unbarred
and each positive integer occurs at most once in $|P|$.
Define $\w{P}$ to be the tableau obtained from $P$ by replacing every off-diagonal letter $\alpha$ with its color-changed counterpart $\overline{\alpha}$. For an unshifted tableau $T$ that can be written as $T=\bp P$ for some shifted tableau satisfying the preceding conditions, set $\w{T}=\bp\,\w{P}$. We also write $P\sphat\,$ in place of $\w{P}$ when the expression for $P$ is long. 
\end{dfn}
\begin{ex}
For $P=\begin{ytableau}1&2&5&7\\\none&3&9\\\none&\none&10\end{ytableau}$ and
$T=\bp P=\begin{ytableau}\ol{1}&1&2&5&7\\\ol{2}&\ol{3}&3&9\\\ol{5}&\ol{9}&\ol{10}&10\\\ol{7}\end{ytableau}\,$, we have
\begin{align*}
\w{P}=\begin{ytableau}1&\ol{2}&\ol{5}&\ol{7}\\\none&3&\ol{9}\\\none&\none&10\end{ytableau}
\end{align*}
and
\begin{align*}
\w{T}=\begin{ytableau}\ol{1}&1&\ol{2}&\ol{5}&\ol{7}\\2&\ol{3}&3&\ol{9}\\5&9&\ol{10}&10\\7\end{ytableau}\,. 
\end{align*}

\end{ex}
The following definition is the analogue of Definition~\ref{wsmx} in the setting of the generalized mixed insertion: 
\begin{dfn}\label{gmx}
Let $P$ be a shifted tableau with $|P|$ having no repeated entries and with every diagonal entry unbarred, $\alpha\in\p\cup\ol{\p}$ be a letter with $|\alpha|\notin|P|$, and $X$ be a row or column of $\mathcal{O}$. We define $P\gmx\alpha$ as follows:

Choose the rule, either the ordinary or dual mixed bumping, that gives $\alpha$ the row nature if $X$ is a row and the column nature if $X$ is a column, and keep it until the procedure terminates. Then as in the generalized mixed insertion, search $X$ (left to right if $X$ is a row and top to bottom if $X$ is a column) for an entry larger than $\alpha$, and letting $\alpha'$ be the first encountered such entry and $c$ the cell containing it, modify $P$ into $P'$ by changing the entry of $c$ into $\alpha$; in case $c$ is diagonal, the color change already applies at this step, so that the entry of $c$ becomes the unbarred one of $\alpha$ and $\ol{\alpha}$, and the letter bumped out of $c$ becomes the one of $\alpha'$ and $\ol{\alpha'}$ having the column nature in the rule. Unlike in the generalized mixed insertion, $P'$ may not be a shifted tableau. If $P'$ is a shifted tableau, we continue bumping as in the generalized mixed insertion, keeping the rule and calling for the color change whenever appropriate, until the procedure terminates as an empty cell is filled, and we define $P\gmx\alpha$ to be the tableau resulting from this procedure; in this case we say that $P\gmx\alpha$ is well defined. So long as $P'$ obtained in the first step satisfies the tableau condition, the conformity with the tableau condition persists until the procedure terminates, and every diagonal entry of the resulting tableau is again unbarred.
\end{dfn}

\begin{rmk}\label{aboveleft2}
In Definition~\ref{gmx}, to determine whether $P'$ is a tableau, it suffices to compare the new entry in $c$ with its neighbor above it if $X$ is a row, and with its neighbor to the left of it if $X$ is a column, as in Remark~\ref{aboveleft}.
\end{rmk}

The next proposition is the counterpart of Proposition~\ref{6m} in \S6. 
\begin{prop}\label{7m}
Let $P$ be a shifted tableau such that all diagonal letters are unbarred
and each positive integer occurs at most once in $|P|$. Let $\alpha\in\p\cup\ol{\p}$ be a letter satisfying $|\alpha|\notin|P|$.
Suppose that $X$ is a row or column, that $P\gmx\alpha$ is well defined, and that, if $X$ is a column, $\alpha$ does not enter a diagonal cell in $P\gmx\alpha$. Then we have
\begin{align}
\bp\left(P\gmx\alpha\right)&=\bp P\ox\ol{\alpha}\x\alpha\label{7.1}\\
&=
\left(\bp \w{P}\ox\alpha\x\ol{\alpha}\right)\sphat. \label{7.2}
\end{align}
Here, $\bp P\ox\ol{\alpha}=(\bp P)\ox\ol{\alpha}$,\, $\bp P\ox\ol{\alpha}\x\alpha=\left((\bp P)\ox\ol{\alpha}\right)\x\alpha$, and so on.
\end{prop}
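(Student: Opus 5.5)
We follow the proof of Proposition~\ref{6m} closely and argue by induction on the number $N$ of diagonal bumpings (counting the filling of an empty diagonal cell as one) in $P\gmx\alpha$. Write $\alpha_0=\alpha$, and let $\alpha_{k-1}$ bump $\alpha_k$ out of $c_k$, with $X_k$ the row or column that $\alpha_k$ enters next. Color changes in the generalized mixed insertion make it convenient to record, for every cell, the letter as it sits in $P$. The key observation is that $\ol{\omega}$ reverses colors on the lower half of $\bp P$. A letter of row nature in $P$ under a given rule therefore becomes, in the transposed copy, the same $|\cdot|$-value with the opposite color. Under the opposite rule ($\ox$ versus $\x$), it again has the column nature along the transposed line $\omega(X_k)$. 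This explains why $\ol{\alpha}$ is inserted into $\omega(X)$ in \eqref{7.1}.

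For the base case $N=0$ we first prove the analogue of Lemma~\ref{6l}. Its four assertions are (1) $\ox\ol{\alpha}$ into $\bp P$ follows the cells $\omega(c_k)$ carrying the letters $\ol{\alpha_k}$; (2) $\x\alpha$ into the result follows the cells $c_k$ carrying $\alpha_k$; (3) and (4) are the analogous statements for \eqref{7.2}, applied to $\bp\w{P}$. Each proof is the same row/column comparison as in Lemma~\ref{6l}. The below-diagonal part of $\omega(X_{k-1})$ coincides with $\ol{\omega}$ of $X_{k-1}$. When $X_{k-1}$ is a row, the above-diagonal part of $\omega(X_{k-1})$ consists of entries at most the diagonal entry $P(\Delta\cap X_{k-1})$, which is unbarred. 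Because $c_k$ is off-diagonal, this diagonal entry is smaller than the incoming letter in either color. Consequently, $\bp P\ox\ol{\alpha}=P\cup\ol{\omega}(P\gmx\alpha)$, and then $\bp P\ox\ol{\alpha}\x\alpha=\bp(P\gmx\alpha)$.

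For \eqref{7.2} we apply \eqref{7.1}-type reasoning to $\w{P}$ with the letters $\alpha,\ol{\alpha}$ swapped. Hatting recolors every off-diagonal letter in both halves. On the off-diagonal cells, inserting $\alpha$ into $\w{\,\cdot\,}$ under a rule is then the hat of inserting $\ol{\alpha}$ under the other rule, since natures are determined by rule together with color. We check that this hat-conjugation commutes with each bump step away from the diagonal. The identity $\w{\w{T}}=T$ then yields \eqref{7.2}.

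For $N\ge1$, let $c_n=(p,p)$ be the first diagonal cell bumped, and let $D_0,D_1,D_2$ be as in Part~II. Up to $c_{n-1}$ the trajectories are given by the lemma. We then analyze the two local pictures at $\omega(c_n)$ and $c_n$, in the same way as Figures~\ref{tn}--\ref{un}. In $\bp P$ the cell $\omega(c_n)$ holds $\ol{P(c_n)}$ and $c_n$ holds $P(c_n)$, and the diagonal color change in $\gmx$ (either barring $\gamma$ or unbarring $\beta$, depending on the rule) must match the fact that the two copies across the diagonal differ in color. We split into the ordinary/dual cases and, within each, into Case~1 and Case~2 according to where the letter leaving the diagonal goes. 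We then show that after both insertions the configuration on $D_1\cup\omega(D_1)$ equals $\bp\bigl((P\gmx\alpha)|_{D_1}\bigr)$. The rest reduces to $\bp(P|_{D_2})$ with two successive insertions of the outgoing letter (one barred, one not) along $X_n$ and $\omega(X_n)$, and the induction hypothesis for $N-1$ handles it.

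The main obstacle is this diagonal step: verifying that the letter actually bumped out of $\omega(c_n)$ or $c_n$ has the correct color, so that the pair continuing into $D_2$ has exactly the form $\ol{\alpha_n}$ into $\omega(X_n)$ followed by $\alpha_n$ into $X_n$, as required by the hypothesis of \eqref{7.1}, or the hatted form required by \eqref{7.2}. This involves the weak versus strict comparisons ($\ge$ in columns, $>$ in rows) between a letter and its own recolored copy. We will settle it by checking each of the four combinations (rule $\times$ case) explicitly, using $\ol{a}<a$.
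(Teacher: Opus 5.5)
Your skeleton is the paper's: induction on $N$, a base lemma whose four parts match Lemma~\ref{7l}, a local analysis at $\omega(c_n)$ and $c_n$, and a reduction to $\bp(P|_{D_2})$ plus the induction hypothesis. The gap is that the diagonal step, which carries all the content, is deferred, and the shortcut you propose for \eqref{7.2} fails exactly there. The hat is defined only on tableaux of the form $\bp P'$. Even extended as ``recolor every letter outside $\Delta\cup\omega(\Delta)$'', it does not commute with bumping at the diagonal. For example, suppose $\gmx\alpha$ is ordinary. Then $\bp P\ox\ol{\alpha}$ (ordinary rule) carries $\ol{\alpha_n}$ from $\omega(c_n)$ down the column $Y$ and bumps $\alpha_n$ out of $c_n$, leaving $\ol{\alpha_{n-1}}$ and $\ol{\alpha_n}$ in $\omega(c_n)$ and $c_n$. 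By contrast, $\bp\w{P}\ox\alpha$ (dual rule) sends $\ol{\alpha_n}$ from $\omega(c_n)$ straight into the row $\omega(X_n)$, leaving $\alpha_{n-1}$ and $\alpha_n$ there. The paths differ, and the intermediate tableaux are not hats of each other; only the final ones are. So $\w{\w{T}}=T$ does not deliver \eqref{7.2}. It needs its own diagonal analysis, as in the paper's Part III; your assertions (3), (4) cover only the off-diagonal stretches.

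The target you state for the diagonal analysis is also wrong in one case. Since $\alpha_n$ is diagonal, it is unbarred, so the rule alone fixes the case: there are two cases, not four.
In the dual case, GM sends $\alpha_n$ unchanged into $X_n$. The doubled continuation is then $\ol{\alpha_n}$ into $\omega(X_n)$ followed by $\alpha_n$ into $X_n$.
In the ordinary case, GM bars $\alpha_n$, so the induction hypothesis must be applied to $\ol{\alpha_n}$. The continuation is then $\alpha_n$ into $\omega(X_n)$ followed by $\ol{\alpha_n}$ into $X_n$, which is the paper's pair $\beta,\ol\beta$.

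Your heuristic also inverts the rules. In \eqref{7.1}, $\ox\ol{\alpha}$ and $\x\alpha$ both use the same rule as $\gmx\alpha$, because line type and color both flip; that is why $\ol{\alpha_k}$ has the column nature on $\omega(X_k)$. It is in \eqref{7.2} that both use the opposite rule.

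Finally, the twin comparisons you flag genuinely change the path of the second insertion, so the check cannot be waved through. In the dual case, the barred $\alpha_{n-1}$ in \eqref{7.1} is stopped at $\omega(c_n)$ by its unbarred twin and reaches $c_n$ only through $Y$. In \eqref{7.2}, the unbarred $\ol{\alpha_{n-1}}$ passes over its barred twin and bumps $c_n$ directly. In the ordinary case it is the other way round. The paper's text uses the ordinary-case paths in both cases. The identities survive, since the resulting tableaux agree, but a complete proof must carry out this local computation separately for each rule and for each of \eqref{7.1} and \eqref{7.2}.
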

\begin{rmk}
The statement of Proposition~\ref{7m} includes the assertion that $\bp P\ox\ol{\alpha}$, $\bbackslash P\ox\ol{\alpha}\x\alpha$, $\bp \w{P}\ox\alpha$, and $\bp \w{P}\ox\alpha\x\ol{\alpha}$ are well defined.
See Remarks~\ref{aboveleft} and~\ref{aboveleft2} for the well-definedness of these tableaux.
\end{rmk}

\begin{proof}
As in the proof of Proposition~\ref{6m}, we proceed by induction on $N$, 
the number of times a diagonal letter is bumped during the insertion $\gmx$ into $P$. 
Note that if an empty diagonal cell is filled, we count this as a bumping of a diagonal letter. 
We begin with the base case $N=0$, which we isolate as a lemma, as before. 
\begin{lem}\label{7l}
Under the hypotheses of Proposition~\ref{7m}, set $\alpha_0=\alpha$. In the insertion $P\gmx\alpha$, suppose that for $0\le k\le n-1$ the letter 
$\alpha_k$ bumps $\alpha_{k+1}$ in the cell $c_{k+1}$. 
We put $\alpha_n=\infty$, i.e., we assume that the insertion terminates when the empty cell $c_n$ is filled with $\alpha_{n-1}$. 
We assume $\alpha_n=\infty$, that is, we assume that this insertion terminates when the empty cell $c_n$ gets filled with $\alpha_{n-1}$. 
Set $P_0=P$ and let $X$ be either a row or column with $X_0=X$. 
For $k\ge1$, let $P_k$ denote the tableau obtained immediately after $\alpha_k$ in $c_k$ is replaced by $\alpha_{k-1}$ during the insertion $P\gmx\alpha$. 
For $1\le k\le n$, let $X_k$ be the row or column that $\alpha_k$ enters after being bumped out of $c_k$.
Suppose that all of $c_1, \dots, c_n$ are off-diagonal. 

With abbreviations $A_k, B_k, \ol{A}_k, \ol{B}_k$ of sentences as defined below, the following assertions (1)--(4) hold: 
\begin{enumerate}[label=\textup{(\arabic*)}, labelsep=0.6em]
  \item In the insertion $\ox\,\overline{\alpha}$ into $\bp P$, $A_k$ holds.
  \item In the insertion $\x\,\alpha$ into $\bp P\ox\overline{\alpha}$, $B_k$ holds.
  \item In the insertion $\ox\,\alpha$ into $\bp \w{P}$, $\overline{A}_k$ holds.
  \item In the insertion $\x\,\overline{\alpha}$ into $\bp \w{P}\ox\alpha$, $\overline{B}_k$ holds.
\end{enumerate}
Abbreviations:
\begin{align*}
A_k &:
\begin{cases}
  \overline{\alpha_0} \text{ bumps } \overline{\alpha_1} \text{ in } \omega(c_1).
  & (k=1)
  \\[1ex]
  \begin{tabular}{@{}l@{}}
    After being bumped out of $\omega(c_{k-1})$, \\
    $\overline{\alpha_{k-1}}$ enters $\omega(X_{k-1})$ and bumps $\overline{\alpha_k}$ in $\omega(c_k)$.
  \end{tabular}
  & (2\le k\le n)
\end{cases} \\[3ex]
B_k &:
\begin{cases}
  \alpha_0 \text{ bumps } \alpha_1 \text{ in } c_1.
  & (k=1)
  \\[1ex]
  \begin{tabular}{@{}l@{}}
    After being bumped out of $c_{k-1}$, \\
    $\alpha_{k-1}$ enters $X_{k-1}$ and bumps $\alpha_k$ in $c_k$.
  \end{tabular}
  & (2\le k\le n)
\end{cases} \\[3ex]
\overline{A}_k &:
\begin{cases}
  \alpha_0 \text{ bumps } \alpha_1 \text{ in } \omega(c_1).
  & (k=1)
  \\[1ex]
  \begin{tabular}{@{}l@{}}
    After being bumped out of $\omega(c_{k-1})$, \\
    $\alpha_{k-1}$ enters $\omega(X_{k-1})$ and bumps $\alpha_k$ in $\omega(c_k)$.
  \end{tabular}
  & (2\le k\le n)
\end{cases} \\[3ex]
\overline{B}_k &:
\begin{cases}
  \overline{\alpha_0} \text{ bumps } \overline{\alpha_1} \text{ in } c_1.
  & (k=1)
  \\[1ex]
  \begin{tabular}{@{}l@{}}
    After being bumped out of $c_{k-1}$, \\
    $\overline{\alpha_{k-1}}$ enters $X_{k-1}$ and bumps $\overline{\alpha_k}$ in $c_k$.
  \end{tabular}
  & (2\le k\le n)
\end{cases}
\end{align*}
\end{lem}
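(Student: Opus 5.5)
We prove the four assertions by induction on $k$, following the proof of Lemma~\ref{6l}. The only new ingredient is the bookkeeping of colors under $\bp$ and $\w{\ \cdot\ }$. Two facts about colors drive the argument. First, $\bp P$ has $P$ on $\mathcal{O}$ and $\ol{\omega}(P)$ below the diagonal. Hence the lower half carries the color-switched letters, while the diagonal cells $(i,i)$ carry the unbarred diagonal letters and their mirror cells $(i,i-1)$ carry the barred versions. Second, switching the color of a letter interchanges its row and column nature. Consequently, the rule that gives $\alpha$ the row nature in $X$ also gives $\ol{\alpha}$ the column nature in $\omega(X)$, because $\omega$ interchanges rows and columns.

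For (1), we assume $A_1,\dots,A_{k-1}$ and let $T_{k-1}$ be the intermediate tableau. Since every $c_j$ is off-diagonal, $T_{k-1}$ agrees with $\bp P$ on the diagonal and on the mirror cells.

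If $X_{k-1}$ is a column, then $\omega(X_{k-1})$ is a row. Its below-diagonal part reproduces, with colors switched, the entries of $X_{k-1}$ in $P_{k-1}$. Because $\ol{\cdot}$ preserves relative order except between $a$ and $\ol{a}$, and $|P|$ has no repeats, the comparison with $\ol{\alpha_{k-1}}$ yields the image cell $\omega(c_k)$. The weak versus strict inequalities also match, since $|\alpha_{k-1}|$ differs from every entry.

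If $X_{k-1}$ is a row, then $\omega(X_{k-1})$ is a column. Its above-diagonal part consists of cells $(i,j)$ with $i<j$, which lie above the diagonal cell. These cells hold letters of $P$ at most the diagonal entry $d=P(\Delta\cap X_{k-1})$. We have $d<\alpha_{k-1}$ because $c_k$ is off-diagonal and $\alpha_{k-1}$ came from the row above. We also need $\ol{d}\le\ol{\alpha_{k-1}}$ where appropriate, which holds since $|d|\ne|\alpha_{k-1}|$ forces $\ol{d}<\ol{\alpha_{k-1}}$. So the search passes the above-diagonal part and continues in the mirrored part, which copies $X_{k-1}$ of $P_{k-1}$ with colors switched. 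This gives $A_k$.

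Part (2) is the same argument on the upper half of $\bp P\ox\ol{\alpha}$. By (1), the lower half of this tableau already equals $\ol{\omega}(P\gmx\alpha)$ restricted to the relevant cells. In the row case the below-diagonal part of $X_{k-1}$ consists of mirrored letters that are at most $\ol{d}$, hence below $\alpha_{k-1}$, so the search never stops there. Parts (3) and (4) are literally (1) and (2) applied to $\w{P}$ with $\alpha$ and $\ol{\alpha}$ interchanged. The point is that $\bp\w{P}$ carries uncolored copies in the lower half and switched colors on the off-diagonal upper half, so the same order comparisons go through once we use $\w{\ \cdot\ }$ to read the trajectory.

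The main obstacle is the row case. There the search must skip the above-diagonal or below-diagonal portion, and we must check that the color-switched diagonal mirror entry $\ol{d}$, sitting at $(i,i-1)$, is strictly smaller than the incoming letter even in the weak-inequality (column) search. I would settle this first by a careful case split on whether $d$ and $\alpha_{k-1}$ are barred, using that their absolute values differ.
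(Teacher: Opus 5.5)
Your proposal is correct and follows essentially the same route as the paper. Both arguments run an induction on $k$ and split on whether $X_{k-1}$ is a row or a column. Both match the mirrored half of the relevant line with $X_{k-1}$ of $P_{k-1}$, colors switched. In the row case, both use the fact that every entry on the far side of the diagonal has absolute value below $|\alpha_{k-1}|$. You bound these entries by the diagonal entry of $X_{k-1}$ itself, as the paper does in Lemma~\ref{6l}. Here the paper instead uses the diagonal entry in the row of $c_{k-1}$, which also covers $k=1$ less directly than yours. Your reduction of (3)--(4) to (1)--(2) for the pair $(\w{P},\ol{\alpha})$ is sound once you note one fact: inserting $\ol{\alpha}$ into $\w{P}$ along $X$ visits the same cells $c_k$ in the same directions $X_k$, carrying the letters $\ol{\alpha_k}$. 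This holds because the opposite rule combined with switched colors preserves every letter's nature, and all comparisons depend only on absolute values. That fact is exactly what the paper's brief ``same way as (1)'' relies on.
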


\begin{proof}[\textrm{Proof of Lemma~\ref{7l}}]
(1) We proceed by induction on $k$. Suppose that all of $A_1, \dots, A_{k-1}$ hold. In case where $k=1$, we suppose nothing. Let $T_0=\bp P$ and $T_{k-1}$ be the tableau which appears right after the replacement of $\ol{\alpha_{k-1}}$ in $\omega(c_{k-1})$ with $\ol{\alpha_{k-2}}$ in the process of $\ox\ol{\alpha}$ applied to $\bp P $ if $k\ge 2$. $\ol{\alpha_{k-1}}$ bumped out of $\omega(c_{k-1})$ enters $\omega(X_k)$. 

If $X_{k-1}$ is a column, it is easy to see $\ol{\alpha_{k-1}}$ bumps $\ol{\alpha_k}$ in $\omega(c_k)$. This is because the arrangement of letters in $X_{k-1}$ of $P_{k-1}$ with colors switched is the same as the arrangement of letters in the below diagonal part of $\omega(X_{k-1})$ of $T_{k-1}$. If $X_{k-1}$ is a row, $\ol{\alpha_{k-1}}$ bumps $\ol{\alpha_k}$ in $\omega(c_k)$ as expected. To see this, we have $T_{k-1}(\Delta\cap\omega(X_{k-1}))=P(\Delta\cap\omega(X_{k-1}))<P(c_{k-1})=\alpha_{k-1}$. The first inequality holds because $\Delta\cap\omega(X_{k-1})$ and a off-diagonal cell $c_{k-1}$ are located in the same row of $P$ (the one above $X_{k-1}$). By this, 
$\ol{\alpha_{k-1}}$ never bumps the above diagonal cells in $\omega(X_{k-1})$ of $T_{k-1}$. 
Because the arrangement of letters in the below diagonal part of $\omega(X_{k-1})$ of $T_{k-1}$ is the same as the arrangement of letters in $X_{k-1}$ of $P_{k-1}$ with colors switched, $\ol{\alpha_{k-1}}$ bumps $\ol{\alpha_k}$ in $\omega(c_k)$. 

(2) As in the proof of (1), we use induction on $k$. We assume that all of $B_1, \dots, B_{k-1}$ hold. In case where $k=1$, we suppose nothing. Let $U_0=\bp P\ox\ol{\alpha}$ and $U_{k-1}$ be the tableau which appears right after the replacement of $\alpha_{k-1}$ in $c_{k-1}$ with $\alpha_{k-2}$ in the process of 
$\x\alpha$ applied to $\bp P\ox\ol{\alpha}$ if $k \ge 2$. $\alpha_{k-1}$ bumped out of $c_{k-1}$ enters $X_{k-1}$. 

If $X_{k-1}$ is a column, $\alpha_{k-1}$ bumps $\alpha_k$ in $c_k$ because the arrangement of letters in the above diagonal part of $X_{k-1}$ of $U_{k-1}$ is the same as the arrangement of letters in $X_{k-1}$ of $P_{k-1}$. Let us think about the case where $X_{k-1}$ is a row. By (1) and $B_1, \dots, B_{k-1}$, $U_{k-1}(\Delta\cap X_{k-1})=P_{k-1}(\Delta\cap X_{k-1})<\alpha_{k-1}$. The last inequality follows from the fact that $\alpha_{k-1}$ does not bump the letter in $\Delta\cap X_{k-1}$ in $\gmx\alpha$ applied to $P$. Hence, $\alpha_{k-1}$ never bumps the below diagonal cells in $X_{k-1}$. The arrangement of letters in the above diagonal part of $X_{k-1}$ of $U_{k-1}$ is the same as the arrangement of letters in $X_{k-1}$ of $P_{k-1}$. So, $\alpha_{k-1}$ bumps $\alpha_k$ in $c_k$. 

(3) We can show our statements in the same way as (1). The only difference from the proof of (1) is that $c_k$ is filled with $\ol{\alpha_k}$ and $\omega(c_k)$ is filled with $\alpha_k$. 

(4) As is the case with (3), our proof is similar to the proof of (2). Howerver, as before, we should note that $c_k$ is filled with $\ol{\alpha_k}$ and $\omega(c_k)$ is filled with $\alpha_k$ in this case. 
\end{proof}

We now return to the proof of Proposition~\ref{7m}. The proof is divided into parts \textup{I}--\textup{III}.

\bigskip

\noindent
\textbf{Part \textup{I}.} \textit{Proof of \eqref{7.1} and \eqref{7.2} in the case $N=0$.}

By Lemma~\ref{7l}\textup{(1)},
\[
  \bp P \ox \overline{\alpha}=
  P \cup \overline{\omega}\left(P\gmx\alpha\right).
\]
Applying Lemma~\ref{7l}\textup{(2)} then gives
\[
  \bp P \ox \overline{\alpha} \x \alpha=
  \left(P\gmx\alpha\right)\cup \overline{\omega}\left(P\gmx\alpha\right),
\]
which is equivalent to \eqref{7.1}. 
Similarly, by Lemma~\ref{7l}\textup{(3)},
\[\bp \w{P}\ox\alpha=\w{P}\cup\ol{\omega}\left(\left(P\gmx\alpha\right)\sphat\ \ \right), \]
and by Lemma~\ref{7l}\textup{(2)}, 
\[\bp \w{P}\ox\alpha\x\ol{\alpha}=\left(P\gmx\alpha\right)\sphat\cup \ol{\omega}\left(\left(P\gmx\alpha\right)\sphat\ \ \right). \]
This is equivalent to \eqref{7.2}. 
\hfill $\diamondsuit$

\bigskip

Next, assume that the number of diagonal bumpings in the insertion $P\gmx\alpha$ is $N\ge 1$. 
Set $\alpha_0=\alpha$, and suppose that, for $0\leq k\leq n-1$, the letter $\alpha_k$ bumps $\alpha_{k+1}$ in the cell $c_{k+1}$ during this insertion. Assume that $c_1,\dots,c_{n-1}$ lie off the diagonal, while $c_n$ is diagonal. For $1\le k\le n$, let $X_k$ denote the row or column that $\alpha_k$ enters after being bumped from $c_k$. 

Write $c_k=(i_k,j_k)$ for $1\le k\le n$, and define
\[
D_0 = \bigcup_{r=1}^{n-1} S_{\mathcal{O}}(c_r), 
\qquad
D_1 = D_0 \cup S_{\mathcal{O}}(c_n),
\qquad
D_2 = \mathcal{O}\setminus D_1 .
\]
Recall that, for $c=(i,j)$, we defined
\[
S_{\mathcal{O}}(c)\;=\;\{\, (i',j')\in \mathcal{O}\mid i'\le i,\; j'\le j \,\}
\]
in the proof of Proposition~\ref{6m}.

Note that $X_{n-1}$ is a row. Indeed, if $X_{n-1}$ were a column, then the cell immediately to the right of $c_{n-1}$ would lie in $X_{n-1}$ and its entry would be greater than $\alpha_{n-1}$. Hence $\alpha_{n-1}$ would bump a letter in a cell of $X_{n-1}$ strictly above the diagonal, contradicting the assumption that $c_n$ is diagonal. 
$\alpha_n$ is unbarred, since diagonal letters of $P$ are unbarred by hypothesis. 

\bigskip

\noindent
\textbf{Part \textup{II}.} \textit{Proof of \eqref{7.1} in the general case.}

\begin{figure}
\centering
\begin{minipage}{0.45\linewidth}
\begin{tikzpicture}[x=1.2cm, y=1.2cm, yscale=-1]
\fill[black!10] (2.5,0.5) rectangle (3.5,6.5);
\fill[blue!10, opacity=0.5] (0.5,3.5) rectangle (6.5,4.5);
\foreach \x in {1.5, ..., 6.5} \draw[line width=1.5pt] (\x-1,\x)--(\x,\x);
\foreach \x in {0.5, ..., 5.5} \draw[line width=1.5pt] (\x,\x+1)--(\x,\x);
\draw[line width=1.5pt](2.5,2.5)--(3.5,2.5);
\draw[line width=1.5pt](3.5,2.5)--(3.5,3.5);
\draw[line width=1.5pt](2.5,3.5)--(2.5,4.5);
\draw[line width=1.5pt](2.5,4.5)--(4.5,4.5);
\draw[line width=1.5pt](3.5,3.5)--(4.5,3.5)--(4.5,4.5);
\draw[line width=1.5pt](1.5,5.5)--(1.5,6.5)--(2.5,6.5)--(2.5,5.5)--(1.5,5.5);
\draw[line width=1.5pt](5.5,2.5)--(5.5,3.5)--(6.5,3.5)--(6.5,2.5)--(5.5,2.5);
\draw(2.7,2.7)node[font=\scriptsize]{};
\draw(3.7,3.7)node[font=\scriptsize]{$c_n$};
\draw(2.7,2.7)node[font=\scriptsize]{$d$};
\draw(5.8,2.7)node[font=\scriptsize]{$c_{n-1}$};
\draw(2.85,3.65)node[font=\scriptsize]{$\omega(c_n)$};
\draw(1.95,5.65)node[font=\scriptsize]{$\omega(c_{n-1})$};
\draw(3,4)node[font=\normalsize]{$\ol{\alpha_n}$};
\draw(6,3)node[font=\normalsize]{$\alpha_{n-1}$};
\draw(2,6)node[font=\normalsize]{$\ol{\alpha_{n-2}}$};
\draw(3,6)node[font=\scriptsize]{$\omega(X_{n-1})$};
\draw(6,4)node[font=\scriptsize]{$X_{n-1}$};
\draw(3,1.5)node[font=\normalsize]{$\downarrow$};
\draw(3,1)node[font=\normalsize]{$\ol{\alpha_{n-1}}$};
\draw(4,0.5)node[font=\normalsize]{\phantom{(Case 2)}};
\end{tikzpicture}
\caption{\label{7tn-1}$T_{n-1}$ and $\ol{\alpha_{n-1}}$ bumped from $\omega(c_{n-1})$.}
\end{minipage}
\hspace{10pt}
\begin{minipage}{0.45\linewidth}
\begin{tikzpicture}[x=1.2cm, y=1.2cm, yscale=-1]
\fill[black!50] (4.5,0.5) rectangle (5.5,6.5);
\fill[black!30] (3.5,0.5) rectangle (4.5,6.5);
\fill[blue!10, opacity=0.5] (0.5,3.5) rectangle (6.5,4.5);
\fill[blue!30, opacity=0.5] (0.5,4.5) rectangle (6.5,5.5);
\foreach \x in {1.5, ..., 6.5} \draw[line width=1.5pt] (\x-1,\x)--(\x,\x);
\foreach \x in {0.5, ..., 5.5} \draw[line width=1.5pt] (\x,\x+1)--(\x,\x);

\draw[line width=1.5pt](2.5,3.5)--(2.5,4.5);
\draw[line width=1.5pt](2.5,4.5)--(4.5,4.5);
\draw[line width=1.5pt](3.5,3.5)--(4.5,3.5)--(4.5,4.5);
\draw(2.7,2.7)node[font=\scriptsize]{};
\draw(3.7,3.7)node[font=\scriptsize]{$c_n$};
\draw(2.85,3.65)node[font=\scriptsize]{$\omega(c_n)$};
\draw(3,4)node[font=\normalsize]{$\ol{\alpha_{n-1}}$};
\draw(4,4)node[font=\normalsize]{$\alpha_n$};
\draw(5,6)node[font=\scriptsize]{$X_n$};
\draw(6,4)node[font=\scriptsize]{$X_{n-1}$};
\draw(6,5)node[font=\scriptsize]{$\omega(X_n)$};
\draw(4,6)node[font=\scriptsize]{$Y$};
\draw(4,1.5)node[font=\normalsize]{$\downarrow$};
\draw(4,1)node[font=\normalsize]{$\ol{\alpha_n}$};
\draw(4,0.5)node[font=\normalsize]{(Case 2)};
\draw(1,5)node[font=\normalsize]{$\ol{\alpha_n}$};
\draw(1.5,5)node[font=\normalsize]{$\rightarrow$};
\draw(1,4.5)node[font=\normalsize]{(Case 1)};
\end{tikzpicture}
\caption{\label{7tn}$T_{n}$ and $\ol{\alpha_n}$ bumped from $\omega(c_n)$. }
\end{minipage}
\caption*{}
\end{figure}

First we prove \eqref{7.1}. When $\ox\ol{\alpha}$ is applied to $\bp P$, 
$A_k$ holds for $1\le k\le n-1$. This follows from Lemma~\ref{7l}(1), specialized to the insertion $\ox\ol{\alpha}$ applied to $\bp \left(P|_{D_0\setminus\{c_{n-1}\}}\right)$. 
After being bumped from $\omega(c_{n-1})$, $\overline{\alpha_{n-1}}$ bumps 
$\overline{\alpha_n}$ in $\omega(c_n)$. To see this, let $T_{n-1}$ denote the tableau
immediately after the bumping of $\overline{\alpha_{n-1}}$ in $\omega(c_{n-1})$, and set $d=\Delta\cap\omega(X_{n-1})$. See Figure~\ref{7tn-1}. 
Then $\overline{\alpha_{n-1}}$ enters $\omega(X_{n-1})$, and
$T_{n-1}(d)= P(d)$ since no above-diagonal letters have been bumped at this stage. Because $X_{n-1}$ is a row,
both $d$ and the off-diagonal cell $c_{n-1}$ lie in the same row
(the one immediately above $X_{n-1}$). Hence $T_{n-1}(d)=P(d)<P(c_{n-1})=\alpha_{n-1}$. 
Since $T_{n-1}(d)=P(d)$ is unbarred, it is smaller than $\overline{\alpha_{n-1}}$. 
Therefore $\overline{\alpha_{n-1}}$ does not bump any above-diagonal cell of $\omega(X_{n-1})$;
instead, it bumps $\overline{\alpha_n}$ in $\omega(c_n)$. The letter $\overline{\alpha_n}$
then proceeds to the next row or column according to the insertion type of $P\gmx\alpha$.
We treat the two cases separately. 

Let $T_n$ denote the tableau obtained immediately after $\overline{\alpha_n}$ is bumped from the cell $\omega(c_n)$. See Figure~\ref{7tn}. 
Note that $\alpha_n$ is unbarred since it is a diagonal letter of $P$. 
\begin{itemize}
\item Case 1: $\ox\ol{\alpha}$ is a dual mixed bumping. 
In this case, $\ol{\alpha_n}$ bumped out of $\omega(c_n)$ enters the next row, $\omega(X_n)$. 
Note that the letters in $D_0$ remain unchanged for the remainder of the insertion $\ox\,\ol{\alpha}$. 
Indeed, at the moment $\ol{\alpha_n}$ in $\omega(c_n)$ is bumped, no letter in $D_0$ exceeds $\ol{\alpha_n}$, so no subsequent bump can occur in $D_0$.

\item Case 2: $\ox\ol{\alpha}$ is an ordinary mixed bumping. 
In this case, after being bumped from the cell $\omega(c_n)$, $\overline{\alpha_n}$ enters the column immediately to the right of $\omega(X_{n-1})$; denote this column by $Y$. Then $\Delta\cap Y=\{c_n\}$ and $T_n(c_n)=\alpha_n$. Since $T_n\big|_{\mathcal{O}}=P$ and each positive integer occurs at most once in $|P|$, the cell of $T_n$ directly above $c_n$ contains a letter strictly less than $\overline{\alpha_n}$. Hence $\overline{\alpha_n}$ bumps $\alpha_n$ in $c_n$, and the letter $\alpha_n$ then enters the next row $\omega(X_n)$. As in Case~1, the letters in $D_0$ remain unchanged for the remainder of the insertion $\ox\,\overline{\alpha}$.

\end{itemize}

Summarizing, we have 
\begin{align}\label{gm1-1}
\bp P\ox\ol{\alpha}=P|_{D_1\setminus\{c_n\}}
\cup c_n\left(\ol{\beta}\right)\cup\ol{\omega}\left(\left.\left(P\gmx\alpha\right)\right|_{D_1}\right)\cup\left((\bp P)|_{D_2\cup\omega(D_2)}\xleftarrow[\!\!\!\!\!\omega(X_n)\!\!\!\!\!]{}\,\beta\right). 
\end{align}
Here, $c_n\left(\ol{\beta}\right)$ denotes the cell $c_n$ with entry $\ol{\beta}$, and
\begin{align}\label{betabeta}
\beta=
\begin{cases}
\ol{\alpha_n} & \text{in Case 1}, \\
\alpha_n & \text{in Case 2}.
\end{cases}
\end{align}

\begin{figure}
\centering
\begin{minipage}{0.45\linewidth}
\begin{tikzpicture}[x=1.2cm, y=1.2cm, yscale=-1]
\fill[blue!10, opacity=0.5] (0.5,3.5) rectangle (6.5,4.5);
\foreach \x in {1.5, ..., 6.5} \draw[line width=1.5pt] (\x-1,\x)--(\x,\x);
\foreach \x in {0.5, ..., 5.5} \draw[line width=1.5pt] (\x,\x+1)--(\x,\x);
\draw[line width=1.5pt](2.5,3.5)--(2.5,4.5)--(4.5,4.5);
\draw[line width=1.5pt](5.5,2.5)--(6.5,2.5)--(6.5,3.5)--(5.5,3.5)--(5.5,2.5);
\draw[line width=1.5pt](3.5,3.5)--(4.5,3.5)--(4.5,4.5);
\draw(3.7,3.7)node[font=\scriptsize]{$c_n$};
\draw(2.85,3.65)node[font=\scriptsize]{$\omega(c_n)$};
\draw(5.8,2.7)node[font=\scriptsize]{$c_{n-1}$};
\draw(3,4)node[font=\normalsize]{$\ol{\alpha_{n-1}}$};
\draw(4,4)node[font=\normalsize]{$\overline{\beta}$};
\draw(6,3)node[font=\normalsize]{$\alpha_{n-2}$};
\draw(6,4)node[font=\scriptsize]{$X_{n-1}$};
\draw(1.6,4)node[font=\normalsize]{$\rightarrow$};
\draw(1,4)node[font=\normalsize]{$\alpha_{n-1}$};
\draw(4,0.5)node[font=\normalsize]{\phantom{(Case 2)}};
\end{tikzpicture}
\caption{\label{7un-1}$U_{n-1}$ and $\alpha_{n-1}$ bumped from $c_{n-1}$. }
\end{minipage}
\hspace{10pt}
\begin{minipage}{0.45\linewidth}
\begin{tikzpicture}[x=1.2cm, y=1.2cm, yscale=-1]
\fill[black!50] (4.5,0.5) rectangle (5.5,6.5);
\foreach \x in {1.5, ..., 6.5} \draw[line width=1.5pt] (\x-1,\x)--(\x,\x);
\foreach \x in {0.5, ..., 5.5} \draw[line width=1.5pt] (\x,\x+1)--(\x,\x);
\draw[line width=1.5pt](2.5,3.5)--(2.5,4.5);
\draw[line width=1.5pt](2.5,4.5)--(4.5,4.5);
\draw[line width=1.5pt](3.5,3.5)--(4.5,3.5)--(4.5,4.5);
\draw(3.7,3.7)node[font=\scriptsize]{$c_n$};
\draw(2.85,3.65)node[font=\scriptsize]{$\omega(c_n)$};
\draw(3,4)node[font=\normalsize]{$\ol{\alpha_{n-1}}$};
\draw(4,4)node[font=\normalsize]{$\alpha_{n-1}$};
\draw(5,6)node[font=\scriptsize]{$X_n$};
\draw(5,1.5)node[font=\normalsize]{$\downarrow$};
\draw(5,1)node[font=\normalsize]{$\ol{\beta}$};
\end{tikzpicture}
\caption{\label{7un}$U_{n}$ and $\ol{\beta}$ bumped from $c_n$. }
\end{minipage}
\caption*{}
\end{figure}

Next, we apply $\x\alpha$ to $\bp P\ox\overline{\alpha}$. As observed above, the letters in $D_0$ remain unchanged throughout the insertion $\ox\,\overline{\alpha}$ into $\bp P$; hence, by \eqref{gm1-1}, 
\begin{align}\label{gm1-2}
  \left.\left(\bp P\ox\overline{\alpha}\right)\right|_{D_0\cup\omega(D_0)\setminus\{c_{n-1}\}}
  = \bp\left(P\big|_{D_0\setminus\{c_{n-1}\}}\right)\ox\overline{\alpha}.
\end{align}
By \eqref{gm1-2} and Lemma~\ref{7l}\textup{(2)}, when $\x\,\alpha$ is applied to $\bp P\ox\overline{\alpha}$, $B_k$ holds for $0\le k\le n-1$. Let $U_{n-1}$ be the tableau right after the replacement of $\alpha_{n-1}$ in $c_{n-1}$. See Figure~\ref{7un-1}. 
After being bumped from $c_{n-1}$, $\alpha_{n-1}$ enters $X_{n-1}$. 
This is because the insertions $\x\alpha$ and $\gmx\alpha$ are of the same bumping type: if one is of ordinary mixed type, then so is the other; if one is of dual mixed type, then so is the other. 
By \eqref{gm1-1}, we have that $\omega(c_n)$ contains $\overline{\alpha_{n-1}}$, whereas $c_n$ contains $\overline{\beta}$ at this stage. Hence $\alpha_{n-1}$ bumps $\overline{\beta}$ in $c_n$. 

Let $U_n$ be the tableau right after the replacement of $\ol{\beta}$ in $c_n$. See Figure~\ref{7un}. 
Then, $\overline{\beta}$ enters $X_n$. This again follows because the insertions $\x\alpha$ and $\gmx\alpha$ are of the same bumping type. 

Thus we obtain
\begin{align}\label{gm1-3}
\bp P\ox\ol{\alpha}\x\alpha=\bp\left(\left.\left(P\gmx\alpha\right)\right|_{D_1}\right)\cup\left((\bp P)|_{D_2\cup\omega(D_2)}\xleftarrow[\!\!\!\!\!\omega(X_n)\!\!\!\!\!]{}\,\beta\xleftarrow[X_n]{}\ol{\beta}\right). 
\end{align}
By definition of generalized mixed insertion, we have
\begin{align}\label{gm1-4}
\left.\left(P\gmx\alpha\right)\right|_{D_2}=P|_{D_2}\xleftarrow[X_n]{\!\!\!\!\!\text{GM}\!\!\!\!\!}\ol{\beta}. 
\end{align}
This follows from \eqref{betabeta} and from the fact that $\gmx\alpha$ is a dual [resp. an ordinary] mixed bumping in Case~1 [resp. Case~2]. 
In $\xleftarrow[X_n]{\!\!\!\!\!\text{GM}\!\!\!\!\!}\ol{\beta}$ applied to $P|_{D_2}$, diagonal letters are bumped for $N-1$ times. So, by the induction hypothesis, we have 
\begin{align}\label{gm1-5}
\bp \left(P|_{D_2}\xleftarrow[X_n]{\!\!\!\!\!\text{GM}\!\!\!\!\!}\ol{\beta}\right)=\bp (P|_{D_2})\xleftarrow[\!\!\!\!\!\omega(X_n)\!\!\!\!\!]{}\,\beta\xleftarrow[X_n]{}\ol{\beta}. 
\end{align}
\eqref{gm1-4} and \eqref{gm1-5} yield
\begin{align}\label{gm1-6}
\bp \left(\left.\left(P\gmx\alpha\right)\right|_{D_2}\right)=\bp (P|_{D_2})\xleftarrow[\!\!\!\!\!\omega(X_n)\!\!\!\!\!]{}\,\beta\xleftarrow[X_n]{}\ol{\beta}. 
\end{align}
The right-hand side of \eqref{gm1-6} agrees with the second term in the union on the right-hand side of \eqref{gm1-3}. 
Therefore, \eqref{7.1} follows from \eqref{gm1-3} and \eqref{gm1-6}. 
\hfill $\diamondsuit$

\bigskip

\noindent
\textbf{Part \textup{III}.} \textit{Proof of \eqref{7.2} in the general case.}

\begin{figure}
\centering
\begin{minipage}{0.45\linewidth}
\begin{tikzpicture}[x=1.2cm, y=1.2cm, yscale=-1]
\fill[black!10] (2.5,0.5) rectangle (3.5,6.5);
\foreach \x in {1.5, ..., 6.5} \draw[line width=1.5pt] (\x-1,\x)--(\x,\x);
\foreach \x in {0.5, ..., 5.5} \draw[line width=1.5pt] (\x,\x+1)--(\x,\x);
\draw[line width=1.5pt](2.5,3.5)--(2.5,4.5);
\draw[line width=1.5pt](2.5,4.5)--(4.5,4.5);
\draw[line width=1.5pt](3.5,3.5)--(4.5,3.5)--(4.5,4.5);
\draw[line width=1.5pt](1.5,5.5)--(2.5,5.5)--(2.5,6.5)--(1.5,6.5)--(1.5,5.5);
\draw[line width=1.5pt](5.5,2.5)--(6.5,2.5)--(6.5,3.5)--(5.5,3.5)--(5.5,2.5);
\draw[line width=1.5pt](2.5,2.5)--(3.5,2.5)--(3.5,3.5); 
\draw(3.7,3.7)node[font=\scriptsize]{$c_n$};
\draw(2.7,2.7)node[font=\scriptsize]{$d$};
\draw(2.85,3.65)node[font=\scriptsize]{$\omega(c_n)$};
\draw(5.8,2.65)node[font=\scriptsize]{$c_{n-1}$};
\draw(4,2.7)node[font=\scriptsize]{};
\draw(2,3.7)node[font=\scriptsize]{};
\draw(1.95,5.7)node[font=\scriptsize]{$\omega(c_{n-1})$};
\draw(3,4)node[font=\normalsize]{$\ol{\alpha_n}$};
\draw(4,4)node[font=\normalsize]{$\alpha_n$};
\draw(6,3)node[font=\normalsize]{$\ol{\alpha_{n-1}}$};
\draw(3,6)node[font=\scriptsize]{$\omega(X_{n-1})$};
\draw(2,6)node[font=\normalsize]{$\alpha_{n-2}$};
\draw(3,1.5)node[font=\normalsize]{$\downarrow$};
\draw(3,1)node[font=\normalsize]{$\alpha_{n-1}$};
\draw(4,0.5)node[font=\normalsize]{\phantom{(Case 2)}};
\end{tikzpicture}
\caption{\label{7vn-1}$V_{n-1}$ and $\alpha_{n-1}$ bumped from $\omega(c_{n-1})$. }
\end{minipage}
\hspace{10pt}
\begin{minipage}{0.45\linewidth}
\begin{tikzpicture}[x=1.2cm, y=1.2cm, yscale=-1]
\fill[black!30] (3.5,0.5) rectangle (4.5,6.5);
\fill[blue!10, opacity=0.5] (0.5,3.5) rectangle (6.5,4.5);
\fill[blue!30, opacity=0.5] (0.5,4.5) rectangle (6.5,5.5);
\foreach \x in {1.5, ..., 6.5} \draw[line width=1.5pt] (\x-1,\x)--(\x,\x);
\foreach \x in {0.5, ..., 5.5} \draw[line width=1.5pt] (\x,\x+1)--(\x,\x);
\draw[line width=1.5pt](2.5,3.5)--(2.5,4.5);
\draw[line width=1.5pt](2.5,4.5)--(4.5,4.5);
\draw[line width=1.5pt](3.5,3.5)--(4.5,3.5)--(4.5,4.5);
\draw(3.7,3.7)node[font=\scriptsize]{$c_n$};
\draw(2.85,3.65)node[font=\scriptsize]{$\omega(c_n)$};
\draw(3,4)node[font=\normalsize]{$\alpha_{n-1}$};
\draw(4,4)node[font=\normalsize]{$\alpha_n$};
\draw(6,5)node[font=\scriptsize]{$\omega(X_n)$};
\draw(4,6)node[font=\scriptsize]{$Y$};
\draw(6,4)node[font=\scriptsize]{$X_{n-1}$};
\draw(1.3,5)node[font=\normalsize]{$\rightarrow$};
\draw(0.7,5)node[font=\normalsize]{$\ol{\alpha_n}$};
\draw(4,1.5)node[font=\normalsize]{$\downarrow$};
\draw(4,1)node[font=\normalsize]{$\ol{\alpha_n}$};
\draw(4,0.5)node[font=\normalsize]{(Case 1)};
\draw(0.5,4.5)node[font=\normalsize]{(Case 2)};
\draw(4,0.5)node[font=\normalsize]{\phantom{(Case 2)}};
\end{tikzpicture}
\caption{\label{7vn}$V_{n}$ and $\ol{\alpha_n}$ bumped from $\omega(c_n)$. }
\end{minipage}
\caption*{}
\end{figure}

Next, we will prove \eqref{7.2}. 
When $\ox\alpha$ is applied to $\bp\w{P}$, $\overline{A}_k$ holds for $1\le k\le n-1$.
This follows from Lemma~\ref{7l}\textup{(3)} applied to the insertion $\ox\alpha$ on
\[
  \bp\left(\left(P\big|_{D_0\setminus\{c_{n-1}\}}\right)\sphat\,\right)
  = \left.\left(\bp\w{P}\right)\right|_{(D_0\setminus\{c_{n-1}\})\cup \omega\left(D_0\setminus\{c_{n-1}\}\right)}.
\]
Let $V_{n-1}$ denote the tableau obtained immediately after $\alpha_{n-1}$ is bumped from the cell $\omega(c_{n-1})$, and set $d=\Delta\cap\omega(X_{n-1})$. See Figure~\ref{7vn-1}. 
After being bumped from $\omega(c_{n-1})$, $\alpha_{n-1}$ enters $\omega(X_{n-1})$ and bumps $\overline{\alpha_n}$ in $\omega(c_n)$. 
This follows from $V_{n-1}(d)<V_{n-1}(c_{n-1})=\ol{\alpha_{n-1}}$, since $d$ and the off-diagonal cell $c_{n-1}$ lie in the same row. 

Let $V_n$ denote the tableau obtained immediately after $\ol{\alpha_n}$ is bumped from $\omega(c_n)$. See Figure~\ref{7vn}. 
The letter $\ol{\alpha_n}$ bumped from $\omega(c_n)$ then enters the next row or column according to the insertion type of $\ox\alpha$. 
As in Part~II, we split into two cases, Case~1 and Case~2. Since the insertion $\ox\,\alpha$ is of a different bumping type from $\ox\,\overline{\alpha}$, the cases are as follows: 
\begin{itemize}
\item Case 1: $\ox\alpha$ uses ordinary mixed bumping. 
In this case, any bumped barred letter enters the next column. Hence, after being bumped from $\omega(c_n)$, $\overline{\alpha_n}$ enters the column immediately to the right of $\omega(X_{n-1})$; as before, denote this column by $Y$. Since $\Delta\cap Y=\left\{c_n\right\}$, for any above-diagonal cell $e$ in $Y$, we have $\left|V_n(e)\right|<\left|V_n(c_n)\right|=\alpha_n$. 
Therefore $\overline{\alpha_n}$ does not bump any above-diagonal cell of $Y$; instead, it bumps $\alpha_n$ in $c_n$. The letter $\alpha_n$ bumped from $c_n$ then enters the row next to $X_{n-1}$, namely $\omega(X_n)$.
\item Case 2: $\ox\alpha$ uses dual mixed bumping. In this case, any bumped barred letter enters the next row. Hence, after being bumped from $\omega(c_n)$, the letter $\overline{\alpha_n}$ enters the row next to $X_{n-1}$, namely $\omega(X_n)$. 
\end{itemize}

Therefore, 
\begin{align}\label{gm2-1}
\bp \w{P}\ox\alpha=\left.\w{P}\right|_{D_1\setminus\{c_n\}}\cup c_n(\beta)&\cup\ol{\omega}\left(\left.\left(P\gmx\alpha\right)\sphat\,\,\right|_{D_1\setminus\{c_n\}}\right)\cup \omega(c_n)(\alpha_{n-1})\notag\\&\cup\left(\left.\left(\bp\w{P}\right)\right|_{D_2\cup\omega(D_2)}\ \xleftarrow[\!\!\!\!\!\omega(X_n)\!\!\!\!\!]{}\,\ol{\beta}\right). 
\end{align}
Here, $c_n(\beta)$ denotes the cell $c_n$ with entry $\beta$, $\omega(c_n)(\alpha_{n-1})$ denotes the cell $\omega(c_n)$ with entry $\alpha_{n-1}$, and
\begin{align}\label{betabeta2}
\beta=
\begin{cases}
\ol{\alpha_n} & \text{in Case 1}, \\
\alpha_n & \text{in Case 2}.
\end{cases}
\end{align}
This definition of $\beta$ is the same as the one in \eqref{betabeta}. 
\begin{figure}
\centering
\begin{minipage}{0.45\linewidth}
\begin{tikzpicture}[x=1.2cm, y=1.2cm, yscale=-1]
\fill[blue!10, opacity=0.5] (0.5,3.5) rectangle (6.5,4.5);
\foreach \x in {1.5, ..., 6.5} \draw[line width=1.5pt] (\x-1,\x)--(\x,\x);
\foreach \x in {0.5, ..., 5.5} \draw[line width=1.5pt] (\x,\x+1)--(\x,\x);
\draw[line width=1.5pt](2.5,3.5)--(2.5,4.5)--(3.5,4.5);
\draw[line width=1.5pt](3.5,3.5)--(4.5,3.5)--(4.5,4.5);
\draw[line width=1.5pt](5.5,2.5)--(6.5,2.5)--(6.5,3.5)--(5.5,3.5)--(5.5,2.5);
\draw[line width=1.5pt](2.5,3.5)--(1.5,3.5)--(1.5,4.5)--(2.5,4.5);
\draw[line width=1.5pt](3.5,3.5)--(3.5,2.5)--(4.5,2.5)--(4.5,3.5);
\draw(2.85,3.7)node[font=\scriptsize]{$\omega(c_n)$};
\draw(1.8,3.7)node[font=\scriptsize]{$\omega(e)$};
\draw(3.7,3.7)node[font=\scriptsize]{$c_n$};
\draw(3.65,2.7)node[font=\scriptsize]{$e$};
\draw(3,4)node[font=\normalsize]{$\alpha_{n-1}$};
\draw(5.8,2.7)node[font=\scriptsize]{$c_{n-1}$};
\draw(6,3)node[font=\normalsize]{$\ol{\alpha_{n-2}}$};
\draw(4,4)node[font=\normalsize]{$\beta$};
\draw(6,4)node[font=\scriptsize]{$X_{n-1}$};
\draw(1,4)node[font=\normalsize]{$\ol{\alpha_{n-1}}$};
\draw(1.5,4)node[font=\normalsize]{$\rightarrow$};
\end{tikzpicture}
\caption{\label{7wn-1}$W_{n-1}$ and $\ol{\alpha_{n-1}}$ bumped from $c_{n-1}$. }
\end{minipage}
\hspace{10pt}
\begin{minipage}{0.45\linewidth}
\begin{tikzpicture}[x=1.2cm, y=1.2cm, yscale=-1]
\fill[black!30] (3.5,0.5) rectangle (4.5,6.5);
\fill[black!50] (4.5,0.5) rectangle (5.5,6.5);
\foreach \x in {1.5, ..., 6.5} \draw[line width=1.5pt] (\x-1,\x)--(\x,\x);
\foreach \x in {0.5, ..., 5.5} \draw[line width=1.5pt] (\x,\x+1)--(\x,\x);
\draw[line width=1.5pt](2.5,3.5)--(2.5,4.5);
\draw[line width=1.5pt](2.5,4.5)--(4.5,4.5);
\draw[line width=1.5pt](3.5,3.5)--(4.5,3.5)--(4.5,4.5);
\draw[line width=1.5pt](3.5,3.5)--(3.5,2.5)--(4.5,2.5)--(4.5,3.5);
\draw[line width=1.5pt](5.5,2.5)--(6.5,2.5)--(6.5,3.5)--(5.5,3.5)--(5.5,2.5);
\draw(3.7,3.7)node[font=\scriptsize]{$c_n$};
\draw(2.85,3.7)node[font=\scriptsize]{$\omega(c_n)$};
\draw(3,4)node[font=\normalsize]{$\ol{\alpha_{n-1}}$};
\draw(4,4)node[font=\normalsize]{$\beta$};
\draw(4,1.5)node[font=\normalsize]{$\downarrow$};
\draw(4,1)node[font=\normalsize]{$\alpha_{n-1}$};
\draw(4,6)node[font=\scriptsize]{$Y$};
\draw(3.65,2.7)node[font=\scriptsize]{$e$};
\draw(5.8,2.7)node[font=\scriptsize]{$c_{n-1}$};
\draw(6,3)node[font=\normalsize]{$\ol{\alpha_{n-2}}$};

\draw(5,6)node[font=\scriptsize]{$X_n$};
\end{tikzpicture}
\caption{\label{7wn}$W_{n}$ and $\alpha_{n-1}$ bumped from $\omega(c_n)$. }
\end{minipage}
\caption*{}
\end{figure}

Let us apply $\x\ol{\alpha}$ to $\bp \w{P}\ox\alpha$. For this insertion, $\ol{B_k}$ holds for all $1\le k\le n-1$.
Indeed, this follows by applying Lemma~\ref{7l}(4) to the insertion $\x\ol{\alpha}$ into
\[
\bp \left(\left(\left.P\right|_{D_0\setminus\{c_n\}}\right)\sphat\ \right)\ox\alpha=\left.\left(\bp \w{P}\ox\alpha\right)\right|_{D_0\cup\omega(D_0)\setminus\{c_n\}}.
\]
Let $W_{n-1}$ be the tableau right after the replacement of $\ol{\alpha_{n-1}}$ in $c_{n-1}$. See Figure~\ref{7wn-1}. 
After being bumped from $c_{n-1}$, $\ol{\alpha_{n-1}}$ enters $X_{n-1}$. 
This follows from two observations. On the one hand, $\x\ol{\alpha}$ and $\gmx\alpha$ use opposite bumping rules: if one uses ordinary mixed bumping, then the other uses dual mixed bumping, and conversely. On the other hand, $\alpha_{n-1}$ and $\ol{\alpha_{n-1}}$ have different colors.
By \eqref{gm2-1}, $W_{n-1}(\omega(c_n))=\alpha_{n-1}$. 
Let $e$ be the cell immediately above $c_n$. Then the cell immediately to the left of $\omega(c_n)$ is $\omega(e)$. By \eqref{gm2-1}, the entry $W_{n-1}(\omega(e))$ is equal to the letter placed in the cell $e$ of $P\gmx\alpha$. This letter is smaller than $\alpha_{n-2}$, the entry of $P\gmx\alpha$ in the cell $c_{n-1}$, since $e$ lies in the same row as $c_{n-1}$ and in the same column as, or to the left of, $c_{n-1}$.
Therefore, $\ol{\alpha_{n-1}}$ bumped from $c_{n-1}$ bumps $\alpha_{n-1}$ in $\omega(c_n)$.

Let $W_n$ be the tableau right after the replacement of $\alpha_{n-1}$ in $\omega(c_n)$. See Figure~\ref{7wn}. 
The letter $\alpha_{n-1}$ bumped from $\omega(c_n)$ enters $Y$, the column containing $c_n$ (as defined above). This again follows from the fact that $\x\ol{\alpha}$ and $\gmx\alpha$ use opposite bumping rules. Since, under $\gmx\alpha$, the letter $\alpha_{n-1}$ bumped from $c_{n-1}$ enters the next row $X_{n-1}$, it follows that, under $\x\ol{\alpha}$, the letter $\alpha_{n-1}$ bumped from $\omega(c_n)$ enters the next column $Y$. 
Since $e$ lies in the same row as $c_{n-1}$ and in the same column as, or to the left of, $c_{n-1}$, $W_n(e)\le W_n(c_{n-1})=\ol{\alpha_{n-2}}$. 
Hence, $\alpha_{n-1}$ bumps the entry $\beta$ in the cell $c_n$.
Note that $\ox\alpha$ and $\x\ol{\alpha}$ use the same bumping rules, namely, either ordinary mixed bumping or dual mixed bumping. Thus, $\ox\alpha$ uses ordinary mixed bumping in Case~1 and dual mixed bumping in Case~2. Together with the definition of $\beta$ in \eqref{betabeta2} and the fact that $\alpha_n$ is unbarred, as it lies on the diagonal, this implies that, in either case, the entry $\beta$ bumped from $c_n$ enters the next column $X_n$.

To sum up, we have 
\begin{align}\label{gm2-2}
\bp \w{P}\ox\alpha\x\ol{\alpha}=\left.\left(P\gmx\alpha\right)\sphat\ \right|_{D_1}&\cup\ol{\omega}\left(\left.\left(P\gmx\alpha\right)\sphat\ \right|_{D_1}\right)\notag\\&\cup\left(\left.\left(\bp \w{P}\right)\right|_{D_2\cup\omega(D_2)}\ \xleftarrow[\!\!\!\!\!\omega(X_n)\!\!\!\!\!]{}\,\ol{\beta}\xleftarrow[\!X_n\!]{}\beta\right). 
\end{align}

In Case~1, $\gmx\alpha$ uses dual mixed bumping, so $\alpha_n$ bumped from $c_n$ enters $X_n$ and remains unbarred. In Case~2, $\gmx\alpha$ uses ordinary mixed bumping, so $\alpha_n$ bumped from $c_n$ becomes barred before it enters $X_n$. Together with \eqref{betabeta2}, this implies that
\begin{align}\label{gm2-3}
\left.\left(P\gmx\alpha\right)\right|_{D_2}
=
P|_{D_2}\xleftarrow[\!X_n\!]{\!\!\!\!\!\text{GM}\!\!\!\!\!}\ol{\beta}.
\end{align}
In the insertion $\xleftarrow[\!X_n\!]{\!\!\!\!\!\text{GM}\!\!\!\!\!}\ol{\beta}$ applied to $P|_{D_2}$, the number of diagonal letters that are bumped is $N-1$. Therefore, by the induction hypothesis, we obtain
\begin{align}\label{gm2-4}
\bp\left(P|_{D_2}\xleftarrow[\!X_n\!]{\!\!\!\!\!\text{GM}\!\!\!\!\!}\ol{\beta}\right)
=
\left(\bp (P|_{D_2})\sphat\xleftarrow[\!\!\!\!\!\omega(X_n)\!\!\!\!\!]{}\,\ol{\beta}\xleftarrow[\!X_n\!]{}\beta\right)\sphat.
\end{align}
Using the easily verified identity
\[
\bp (P|_{D_2})\sphat=\left.\left(\bp \w{P}\right)\right|_{D_2\cup\omega(D_2)}
\]
together with \eqref{gm2-3} and \eqref{gm2-4}, we see that the last term on the right-hand side of \eqref{gm2-2} is equal to
\[
\bp\left(\left.\left(P\gmx\alpha\right)\sphat\ \right|_{D_2}\right).
\]
This shows that \eqref{gm2-2} is precisely \eqref{7.2}, as desired.
\end{proof}

Now we define a transformation of words. Similarly to $d_1$ defined in \S 6, this transformation sends words of length $n$ to biwords of length $2n$.

\begin{dfn}\label{d2}
Define an injective map $d_2 : B_n\to DC_{2n}$ as follows: 

Let $\pi=\alpha_1\dots\alpha_n\in B_n$. 
For each $k$ satisfying $1\leq k\leq n$, let $a_k=|\alpha_k|$. Define column vectors $v_{2k-1}$ and $v_{2k}$ by
\begin{align*}
v_{2k-1}=
\begin{pmatrix}
k \\
\ol{a_k}  \\
\end{pmatrix}, 
v_{2k}=
\begin{pmatrix}
k \\
a_k  \\
\end{pmatrix}
\end{align*}
if $\alpha_k=a_k$, and by 
\begin{align*}
v_{2k-1}=
\begin{pmatrix}
\ol{k} \\
a_k  \\
\end{pmatrix}, 
v_{2k}=
\begin{pmatrix}
\ol{k} \\
\ol{a_k} \\
\end{pmatrix}
\end{align*}
if $\alpha_k=\ol{a_k}$. 
We define $d_2(\pi)$ to be the doubly colored biword of length $2n$, whose columns are $v_1, \dots, v_{2n}$ arranged in this order. 
\end{dfn}

The generalized mixed insertion can be embedded into the doubly mixed insertion as follows: 
\begin{thm}\label{GMDM}
For $\pi\in B_n$, we have 
\begin{align*}
\bp P_\text{GM}(\pi)=P_\text{M*}(d_2(\pi)), \\
\bbackslash Q_\text{GM}(\pi)=Q_\text{M*}(d_2(\pi)).
\end{align*}
\end{thm}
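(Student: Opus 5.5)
The plan follows the proof of Theorem~\ref{WSMDM}, with Proposition~\ref{7m} playing the role of Proposition~\ref{6m}. Write $\pi=\alpha_1\dots\alpha_n$, put $P_k=P_\text{GM}(\alpha_1\dots\alpha_k)$, and let $R_1$ be the first row of $\mathcal{Q}$.

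First I unwind the doubly mixed insertion of $d_2(\pi)$ two columns at a time. If $\alpha_k=a_k$ is unbarred, the columns $v_{2k-1},v_{2k}$ both have top letter $k$. So they are inserted by two ordinary mixed insertions: first $\ol{a_k}$, then $a_k$. An ordinary mixed insertion of a barred letter starts in the first column, which is $\omega(R_1)$, and an ordinary insertion of an unbarred letter starts in $R_1$. Hence this step is $\xleftarrow[\omega(R_1)]{}\ol{\alpha_k}\xleftarrow[R_1]{}\alpha_k$. If $\alpha_k=\ol{a_k}$, both columns have top $\ol{k}$ and are dual insertions of $a_k$ and then $\ol{a_k}$. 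The unbarred $a_k$ has column nature under the dual rule, so this step is also $\xleftarrow[\omega(R_1)]{}\ol{\alpha_k}\xleftarrow[R_1]{}\alpha_k$. Thus in both cases
\[
P_\text{M*}(d_2(\pi))=\emptyset\xleftarrow[\omega(R_1)]{}\ol{\alpha_1}\xleftarrow[R_1]{}\alpha_1\cdots\xleftarrow[\omega(R_1)]{}\ol{\alpha_n}\xleftarrow[R_1]{}\alpha_n .
\]
One also checks that $d_2(\pi)$ is indeed in $DC_{2n}$: the neg-ordering conditions hold in each case.

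Next I apply \eqref{7.1} of Proposition~\ref{7m} with $X=R_1$, which is a row, so the column hypothesis is vacuous. Since $P_{k-1}\xleftarrow[R_1]{\text{GM}}\alpha_k=P_k$, induction on $k$ gives $\bp P_{k-1}\xleftarrow[\omega(R_1)]{}\ol{\alpha_k}\xleftarrow[R_1]{}\alpha_k=\bp P_k$. In particular $P_\text{M*}(d_2(\pi))=\bp P_\text{GM}(\pi)$, which is the first identity.

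For the recording tableaux, let $c_k$ be the cell added in $P_k$. Then $D(\bp P_k)\setminus D(\bp P_{k-1})=\{c_k,\omega(c_k)\}$, and one of the two insertions at step $k$ fills $c_k$ while the other fills $\omega(c_k)$. The cells $c_k$ and $\omega(c_k)$ receive the labels of $v_{2k-1}$ and $v_{2k}$, namely $k,k$ if $\alpha_k$ is unbarred and $\ol{k},\ol{k}$ if $\alpha_k$ is barred. This matches $Q_\text{GM}(\pi)(c_k)$, which is $k$ or $\ol{k}$ according to the color of $\alpha_k$. It also matches $\omega(Q_\text{GM}(\pi))(\omega(c_k))$. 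So $Q_\text{M*}(d_2(\pi))=Q_\text{GM}(\pi)\cup\omega(Q_\text{GM}(\pi))=\bbackslash Q_\text{GM}(\pi)$, where \eqref{dmeg}-type arguments are not even needed: which of the two insertions fills which cell is irrelevant, because both labels agree. Here $Q_\text{GM}(\pi)$ may have barred diagonal entries, which is permitted for $\bbackslash$. I must still confirm that $\bbackslash$ of it is an unshifted tableau; this follows from Definition~\ref{squareop}(1), or directly because $Q_\text{M*}$ is one.

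The main obstacle is the first step: checking carefully that the two columns at each step translate into exactly the insertions $\xleftarrow[\omega(R_1)]{}\ol{\alpha_k}$ and $\xleftarrow[R_1]{}\alpha_k$, with the rule choice in Definition~\ref{wsmx}(1) matching the ordinary/dual choice dictated by the top letter. After that, the argument reduces directly to Proposition~\ref{7m}.
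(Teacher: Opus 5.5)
Your proposal is correct and follows essentially the same route as the paper. Both proofs unwind $P_\text{M*}(d_2(\pi))$ as the chain $\xleftarrow[\omega(R_1)]{}\ol{\alpha_k}\xleftarrow[R_1]{}\alpha_k$, apply \eqref{7.1} of Proposition~\ref{7m} with $X=R_1$ at each step, and read off $Q_\text{M*}(d_2(\pi))=\bbackslash Q_\text{GM}(\pi)$ from the fact that the two new cells $c_k,\omega(c_k)$ both receive $k$ or $\ol{k}$; your explicit check that the rule chosen in Definition~\ref{wsmx}(1) agrees with the ordinary/dual rule dictated by the top letters of $v_{2k-1},v_{2k}$ (and that $d_2(\pi)\in DC_{2n}$) is a detail the paper leaves implicit.
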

\begin{proof}
Write $\pi=\alpha_1\dots\alpha_n$, and let $R_1$ be the first row of $\mathcal{Q}$. Define $P_0=\emptyset$, $P_k=P_\text{GM}(\alpha_1\dots\alpha_k)$, $T_0=\emptyset$, and $T_k=P_\text{M*}(d_2(\alpha_1\dots\alpha_k))$ for $1\le k\le n$.
Then, by the definition of the doubly mixed insertion, we have
\[T_k=\emptyset\xleftarrow[\!\!\!\!\!\omega(R_1)\!\!\!\!\!]{}\ol{\alpha_1}\xleftarrow[R_1]{}\alpha_1\cdots\xleftarrow[\!\!\!\!\!\omega(R_1)\!\!\!\!\!]{}\ol{\alpha_k}\xleftarrow[R_1]{}\alpha_k.\]
By Proposition~\ref{7m}, we have
\[\emptyset\xleftarrow[\!\!\!\!\!\omega(R_1)\!\!\!\!\!]{}\ol{\alpha_1}\xleftarrow[R_1]{}\alpha_1\cdots\xleftarrow[\!\!\!\!\!\omega(R_1)\!\!\!\!\!]{}\ol{\alpha_k}\xleftarrow[R_1]{}\alpha_k=\bp P_k.\]
Hence $\bp P_k=T_k$ holds for $0\leq k\leq n$. In particular, the first equation $\bp P_\text{GM}(\pi)=P_\text{M*}(d_2(\pi))$ holds.
 
In order to show the second equation, we define $c_k$ by $\{c_k\}=D(P_k)\setminus D(P_{k-1})$ for $1\leq k\leq n$. 
Because $\bp P_k=T_k$ holds for $0\leq k\leq n$, we can see $D(T_k)\setminus D(T_{k-1})=\{c_k, \omega(c_k)\}$ for $1\leq k\leq n$. 
By definition, $Q_\text{GM}(\pi)$ is the tableau obtained by filling, for each $1\leq k\leq n$, the cell $c_k$ with $k$ if $\alpha_k$ is unbarred, and with $\ol{k}$ if $\alpha_k$ is barred.
Similarly, $Q_\text{M*}(d_2(\pi))$ is the tableau obtained by filling, for each $1\leq k\leq n$, the cells $c_k$ and $\omega(c_k)$ with $k$ if $\alpha_k$ is unbarred, and with $\ol{k}$ if $\alpha_k$ is barred.
Hence, by the definition of $\bbackslash$\,, the second equation $\bbackslash Q_\text{GM}(\pi)=Q_\text{M*}(d_2(\pi))$ holds.
\end{proof}

\section{Duality}

In Theorem~\ref{WSMDM} and Theorem~\ref{GMDM}, we showed that the Worley--Sagan mixed insertion and the generalized mixed insertion can be embedded into the doubly mixed insertion. 
By this embeddings and the self-duality of the doubly mixed insertion (Theorem~\ref{DMd}), we can show that the duality between the Worley--Sagan mixed insertion and the generalized mixed insertion. 

First of all, we define the operation of taking inverse in $B_n$. 

\begin{dfn}
Let $\pi=\alpha_1\dots\alpha_n\in B_n$. 
The inverse of $\pi$ is defined to be $\beta_1\dots\beta_n\in B_n$ and written $\pi^{-1}$, where $\beta_k$ is defined by 
\begin{align*}
\beta_{a_k}=\begin{cases} k & \text{if $\alpha_k$ is unbarred, }\\
\ol{k}&
\text{if $\alpha_k$ is barred. } \\
\end{cases}
\end{align*}
for each $k$ ($1\leq k\leq n$). 
\end{dfn}
\begin{rmk}
If all $\alpha_k$ are unbarred,  
$\pi$ can be seen as an element of $S_n$, the symmetric group of degree $n$. 
In this case, $\pi^{-1}$ can also be seen as an element of $S_n$ and coincides with the inverse of $\pi$ taken as an element of $S_n$. That is, the definition of inverse defined above is an extension of the usual definition of inverse of the elements of $S_n$. 
\end{rmk}

With this definition, a duality between the Worley--Sagan mixed insertion and the generalized mixed insertion can be expressed as follows:  \begin{thm}\label{WSMGMd}
Let $\pi\in B_n$. If $\pi\xlr{\!\!\!\!\!\text{WSM}\!\!\!\!\!}(P, Q)$, then $\pi^{-1}\xlr{\!\!\!\!\!\text{GM}\!\!\!\!\!}(Q, P)$ holds. 
\end{thm}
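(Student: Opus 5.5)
The plan is to transport the problem through the doubling maps into the doubly mixed insertion, where Theorem~\ref{DMd} applies, and then pull the conclusion back using injectivity of the doubling operators. The key combinatorial input is a word-level identity relating $d_1$, $d_2$ and inversion:
\begin{align*}
d_1(\pi)^{-1}=d_2(\pi^{-1})\qquad(\pi\in B_n).
\end{align*}
I would prove this first, directly from the definitions. For each $k$, $d_1(\pi)$ contains the two columns $\begin{psmallmatrix}\ol{k}\\\alpha_k\end{psmallmatrix}$ and $\begin{psmallmatrix}k\\\alpha_k\end{psmallmatrix}$. Inverting swaps the rows, so these become $\begin{psmallmatrix}\alpha_k\\\ol{k}\end{psmallmatrix}$ and $\begin{psmallmatrix}\alpha_k\\k\end{psmallmatrix}$. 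Put $a=|\alpha_k|$, so that $\pi^{-1}$ has $\beta_a=k$ or $\ol{k}$ according to the color of $\alpha_k$. If $\alpha_k=a$ is unbarred, the pair is $\begin{psmallmatrix}a\\\ol{k}\end{psmallmatrix},\begin{psmallmatrix}a\\k\end{psmallmatrix}$, which is exactly $v_{2a-1},v_{2a}$ of $d_2(\pi^{-1})$ for $\beta_a=k$. If $\alpha_k=\ol{a}$, the pair is $\begin{psmallmatrix}\ol{a}\\ k\end{psmallmatrix},\begin{psmallmatrix}\ol{a}\\\ol{k}\end{psmallmatrix}$, matching $v_{2a-1},v_{2a}$ for $\beta_a=\ol{k}$. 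It then remains to check that the ordering conventions of $DC_{2n}$ put these columns in the stated order. For equal unbarred top letters, the bottom letters must weakly increase in the $\mathrm{neg}$ order, and $-k<k$. For equal barred top letters, they must weakly decrease, so $k$ comes before $\ol{k}$. Both agree with $d_2$. The distinct top letters $\ol{a}<a$ take care of the overall order.

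With this identity in hand, the argument runs as follows. Suppose $\pi\xlr{\!\!\!\!\!\text{WSM}\!\!\!\!\!}(P,Q)$. By Theorem~\ref{WSMDM}, $d_1(\pi)\xlr{\!\!\!\!\!\text{M*}\!\!\!\!\!}(\bbackslash P,\bp Q)$. Theorem~\ref{DMd} then gives $d_1(\pi)^{-1}\xlr{\!\!\!\!\!\text{M*}\!\!\!\!\!}(\bp Q,\bbackslash P)$, that is, $d_2(\pi^{-1})\xlr{\!\!\!\!\!\text{M*}\!\!\!\!\!}(\bp Q,\bbackslash P)$.

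Now let $\pi^{-1}\xlr{\!\!\!\!\!\text{GM}\!\!\!\!\!}(P',Q')$. Theorem~\ref{GMDM} gives $d_2(\pi^{-1})\xlr{\!\!\!\!\!\text{M*}\!\!\!\!\!}(\bp P',\bbackslash Q')$. Comparing the two, we get $\bp P'=\bp Q$ and $\bbackslash Q'=\bbackslash P$.

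Finally, $\bbackslash$ and $\bp$ are injective on shifted tableaux, since each restricts to the identity on the $\mathcal{O}$-part. Note that $Q'$, $P'$ and $Q$ all have unbarred diagonals, so $\bp$ is defined where it is applied. Hence $P'=Q$ and $Q'=P$, and so $\pi^{-1}\xlr{\!\!\!\!\!\text{GM}\!\!\!\!\!}(Q,P)$.

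The only real work is the word identity, and within it the main obstacle is the ordering check for equal top letters in $DC_{2n}$. That is where a sign or convention error could occur, so I would verify it carefully against the $\mathrm{neg}$ convention.
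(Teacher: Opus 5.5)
Your proof is correct and is essentially the paper's own argument: you establish $d_1(\pi)^{-1}=d_2(\pi^{-1})$ (the paper's square (2)), chain Theorems~\ref{WSMDM}, \ref{DMd} and \ref{GMDM}, and conclude by injectivity of the doubling operators, and your explicit check of the $\mathrm{neg}$-ordering for equal top letters fills in a detail the paper leaves implicit. One harmless slip: $Q'=Q_{\text{GM}}(\pi^{-1})$ need not have unbarred diagonal entries (Example~\ref{gmeg} has $\ol{1}$ and $\ol{7}$ on the diagonal), but this does not matter because you only apply $\bbackslash$ to $Q'$, and $\bbackslash$ imposes no such requirement.
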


\begin{proof}
\begin{figure}[!t]
\centering
\[\begin{tikzpicture}
\node{\begin{tikzcd}[row sep=huge, column sep=huge]
\coprod\limits_{\lambda\in\mathcal{P}_d(n)}\text{SSHTab}''(\lambda)\times\text{SSHTab}'(\lambda)
\arrow[r, "\text{sw}"] \arrow[ddd, bend right=60, swap, start anchor={[yshift=2ex]}, "\text{$\bbackslash\times\bp$}"]
&\coprod\limits_{\lambda\in\mathcal{P}_d(n)}\text{SSHTab}'(\lambda)\times\text{SSHTab}''(\lambda)
 \arrow[ddd, bend left=60, start anchor={[yshift=2ex]}, "\text{$\bp\times\bbackslash$}"]\\
B_n \arrow[r,"\text{inv}"] \arrow[u,"\text{WSM}", end anchor={[yshift=2ex]}] \arrow[d,"d_1", swap]
&B_n\arrow[d,"d_2"]
\arrow[u, "\text{GM}", swap, end anchor={[yshift=2ex]}] \\
DC_{2n}\arrow[r, "\text{inv}"] \arrow[d, "\text{M*}", swap]&DC_{2n} \arrow[d, "\text{M*}"]\\
\coprod\limits_{\mu\in\mathcal{P}(n)}\text{UNSHTab}(\mu)\times\text{UNSHTab}(\mu)\arrow[r, "\text{sw}"]& 
\coprod\limits_{\mu\in\mathcal{P}(n)}\text{UNSHTab}(\mu)\times\text{UNSHTab}(\mu) 
\end{tikzcd}};
\draw(0,2.2)node{(1)};
\draw(0,0)node{(2)};
\draw(0,-2.2)node{(3)};
\draw(-6,0.5)node{(4)};
\draw(6,0.5)node{(5)};
\end{tikzpicture}\]
\caption{\label{commutative}}
\end{figure}
Consider the commutative diagram in Figure~\ref{commutative}.
Here, $\text{WSM}$, $\text{GM}$ and $\text{M*}$ are the bijections that appeared in Theorem\ref{WSMRS}, Theorem~\ref{GMRS} and Theorem~\ref{DMRS}, respectively. $\text{inv}$ is the map that sends a colored permutation $\pi$ to its inverse $\pi^{-1}$, and $\text{sw}$ is the map that swaps the order of the tableau pairs, i.e.,  $\text{sw}(P, Q)=(Q, P)$. 
$\bbackslash\times\bp$ is the map that sends $(P, Q)$ in $\text{SSHTab}''(\lambda)\times\text{SSHTab}'(\lambda)$ for some $\lambda\in\mathcal{P}_d(n)$ to $(\,\bbackslash P, \bp Q)$ in 
$\text{UNSHTab}(\mu)\times\text{UNSHTab}(\mu)$, where $\mu$ is the partition that satisfies $D(\mu)=D'(\lambda)\cup\omega(D'(\lambda))$. 
The definition of $\bp\times\bbackslash$ is similar, only the order of operators is swapped. 

We know that the squares (3), (4) and (5) are commutative by Theorem~\ref{DMd}, Theorem~\ref{WSMDM} and Theorem~\ref{GMDM}, respectively. What we want to prove now is equivalent to the commutativity of the square (1). A simple calculation shows that the square (2) is also commutative. First of all, we will show this fact. 

The commutativity of (2) is equivalent to say that $d_1(\pi)^{-1}=d_2(\pi^{-1})$ holds for any $\pi\in B_n$. 
Let $\pi=\alpha_1\dots\alpha_n$ and $\pi^{-1}=\beta_1\dots\beta_n$, where $\alpha_k, \beta_k\in\p\cup\ol{\p}$ ($1\leq k\leq n$). For each integer $k$ with $1\leq k\leq n$, we use the $2$-dimensional column vectors $u_{2k-1}, u_{2k}$ from Definition~\ref{d1} and $v_{2k-1}, v_{2k}$ from Definition~\ref{d2} again. 
Let $a_k=|\alpha_k|$. 
$d_1(\pi)$ is a doubly colored biword obtained by sorting $u_1,\dots, u_{2n}$ which satisfies 
\[\left(u_{2k-1}, u_{2k}\right)=\begin{cases}\hspace{5pt}
\begin{pmatrix}
 \ol{k} & k \\
a_k & a_k 
\end{pmatrix}
& \text{if $\alpha_k$ is unbarred, } \\[4ex]
\hspace{5pt}
\begin{pmatrix}
 \ol{k} & k \\
\ol{a_k} & \ol{a_k} 
\end{pmatrix}&
\text{if $\alpha_k$ is barred. } \\
\end{cases}
\]
On the other hand, $d_2(\pi^{-1})$ is a doubly colored biword obtained by sorting $v_1, \dots, v_{2n}$ which satisfies 
\[\left(v_{2a_k-1}, v_{2a_k}\right)=\begin{cases}\hspace{5pt}
\begin{pmatrix}
 a_k & a_k\\
 \ol{k} & k 
\end{pmatrix}
& \text{if $\alpha_k$ is unbarred, }\\[4ex]
\hspace{5pt}
\begin{pmatrix}
 \ol{a_k} & \ol{a_k} \\
k & \ol{k}
\end{pmatrix}&
\text{if $\alpha_k$ is barred. }\\
\end{cases}
\]

If $\alpha_k$ is unbarred, the two line array obtained by swapping the first and second rows of $(u_{2k-1}, u_{2k})$ is equal to $(v_{2a-1}, v_{2a})$. 
If $\alpha_k$ is barred, the two line array obtained by swapping the first and second rows of $(u_{2k-1}, u_{2k})$ is equal to $(v_{2a}, v_{2a-1})$. Therefore, $d_1(\pi)^{-1}=d_2(\pi^{-1})$ holds. 

Now, we know that the squares (2), (3), (4) and (5) are all commutative in the diagram above. 
Furthermore, obviously the outermost square (let us call it (6)) is also commutative, meaning that $\text{sw}\circ(\,\bbackslash\times\bp)=(\,\bp\times\bbackslash\,)\circ\text{sw}$. 
So, for $\pi\in B_n$, 
\begin{align*}
(\,\bp\times\bbackslash\,)\circ\text{sw}\circ\text{WSM}(\pi)
&\overset{(6)}{=}\text{sw}\circ(\,\bbackslash\times\bp)\circ\text{WSM}(\pi)\\
&\overset{(4)}{=}\text{sw}\circ\text{M*}\circ d_1(\pi)\\
&\overset{(3)}{=}\text{M*}\circ\text{inv}\circ d_1(\pi)\\
&\overset{(2)}{=}\text{M*}\circ d_2\circ\text{inv}(\pi)\\
&\overset{(5)}{=}(\,\bp\times\bbackslash\,)\circ\text{GM}\circ\text{inv}(\pi). 
\end{align*} 
Since $\bp\times\bbackslash$ is injective, we can conclude that $\text{sw}\circ\text{WSM}(\pi)=\text{GM}\circ\text{inv}(\pi)$. 
\end{proof}
\begin{ex}
$\pi$ that appears in Example~\ref{wsmeg} and $\sigma$ that appears in Example~\ref{gmeg} are inverse of each other. Comparing \eqref{wsmeq} and \eqref{gmeq}, it can be seen that the order of tableaux is swapped, as Theorem~\ref{WSMGMd} asserts. 
\end{ex}

\section*{Acknowledgements}
I am deeply grateful to my advisor, Itaru Terada, for his advice on every detail and for his patient guidance despite my procrastination in writing this paper. 

\bibliographystyle{amsplain}
\bibliography{references}

@article{h,
  author  = {Haiman, M. D.},
  title   = {On mixed insertion, symmetry, and shifted {Young} tableaux},
  journal = {J. Combin. Theory Ser. A},
  volume  = {50},
  number  = {2},
  pages   = {196--225},
  year    = {1989},
  doi     = {10.1016/0097-3165(89)90015-0}
}

@article{i,
  author  = {Ishikawa, M.},
  title   = {Mixed {Robinson--Schensted} correspondence, {Fomin} version, and mixed {Knuth} correspondence for {$(A,B)$}-partially strict tableaux},
  journal = {J. Fac. Educ. Tottori Univ. (Nat. Sci.)},
  volume  = {44},
  pages   = {17--64},
  year    = {1995}
}

@book{m,
  author    = {Macdonald, I. G.},
  title     = {Symmetric functions and {Hall} polynomials},
  publisher = {Oxford University Press},
  edition   = {second},
  year      = {1995}
}

@mastersthesis{n,
  author = {Nakagiri, M.},
  title  = {An extension of the {Worley--Sagan} insertion and {Haiman}'s mixed insertion for shifted {Young} tableaux and the duality between them},
  school = {The University of Tokyo},
  year   = {2023},
  note   = {in Japanese}
}

@article{sa,
  author  = {Sagan, B. E.},
  title   = {Shifted tableaux, {Schur} {$Q$}-functions, and a conjecture of {R. Stanley}},
  journal = {J. Combin. Theory Ser. A},
  volume  = {45},
  number  = {1},
  pages   = {62--103},
  year    = {1987},
  doi     = {10.1016/0097-3165(87)90047-1}
}

@article{sch,
  author  = {Schensted, C.},
  title   = {Longest increasing and decreasing subsequences},
  journal = {Canad. J. Math.},
  volume  = {13},
  pages   = {179--191},
  year    = {1961},
  doi     = {10.4153/CJM-1961-015-3}
}

@article{sh-wh,
  author  = {Shimozono, M. and White, D. E.},
  title   = {A color-to-spin domino {Schensted} algorithm},
  journal = {Electron. J. Combin.},
  volume  = {8},
  number  = {1},
  pages   = {Research Paper 21, 50 pp.},
  year    = {2001},
  doi     = {10.37236/1565}
}

@phdthesis{wo,
  author = {Worley, D. R.},
  title  = {A theory of shifted {Young} tableaux},
  school = {Massachusetts Institute of Technology},
  year   = {1984}
}

\par
\begin{flushleft}
\normalsize
Graduate School of Mathematical Sciences\\
The University of Tokyo\\
3-8-1 Komaba, Meguro-ku\\
Tokyo 153-8914, Japan\\
\textit{E-mail address:} \texttt{mnakagiri0@gmail.com}
\end{flushleft}

\end{document}